\numberwithin{equation}{section}
\numberwithin{figure}{section}
\theoremstyle{plain}
\newtheorem{thm}{\protect\theoremname}
\theoremstyle{definition}
\newtheorem{defn}[thm]{\protect\definitionname}
\theoremstyle{remark}
\newtheorem{rem}[thm]{\protect\remarkname}
\theoremstyle{plain}
\newtheorem{lem}[thm]{\protect\lemmaname}
\theoremstyle{plain}
\newtheorem{prop}[thm]{\protect\propositionname}
\theoremstyle{plain}
\newtheorem{cor}[thm]{\protect\corollaryname}
\theoremstyle{plain}
\newtheorem{ex}[thm]{\protect\examplename}
\theoremstyle{plain}
\newtheorem{ass}[thm]{\protect\assumptionname}
\numberwithin{thm}{section}
\providecommand{\corollaryname}{Corollary}
\providecommand{\definitionname}{Definition}
\providecommand{\lemmaname}{Lemma}
\providecommand{\propositionname}{Proposition}
\providecommand{\remarkname}{Remark}
\providecommand{\theoremname}{Theorem}
\providecommand{\examplename}{Example}
\providecommand{\assumptionname}{Assumption}
\newcommand{\red}[1]{{\color{red} #1}}
\newcommand{\cB}{\mathcal{B}}
\newcommand{\cE}{\mathcal{E}}
\newcommand{\cF}{\mathcal{F}}
\newcommand{\cI}{\mathcal{I}}
\newcommand{\cL}{\mathcal{L}}
\newcommand{\cP}{\mathcal{P}}
\newcommand{\EE}{\mathbb{E}}
\newcommand{\NN}{\mathbb{N}}
\newcommand{\PP}{\mathbb{P}}
\newcommand{\RR}{\mathbb{R}}
\newcommand{\mathd}{\mathrm{d}}
\newcommand{\vertiii}[1]{{\left\vert\kern-0.25ex\left\vert\kern-0.25ex\left\vert #1 
  \right\vert\kern-0.25ex\right\vert\kern-0.25ex\right\vert}}
\newcommand{\eps}{\varepsilon}
\newcommand{\dd}{\mathop{}\!\mathrm{d}}
\let\div\relax
\DeclareMathOperator{\div}{div}
\long\def\avi#1{{\color{red}Avi:\ #1}}
\long\def\lucio#1{{\color{blue}Lucio:\ #1}}
\title{Distribution dependent SDEs driven by Additive Continuous Noise}
\author{ Lucio Galeati \and Fabian A. Harang \and Avi Mayorcas}
\date{\today}
\address{Lucio Galeati: 
Institute of Applied Mathematics, University of Bonn, 53115 Endenicher Allee 60, Bonn, Germany.
}
\email{lucio.galeati@iam.uni-bonn.de}
\address{Fabian A. Harang: 
 Department of Mathematics, University of Oslo, P.O. box 1053, Blindern, 0316, OSLO, Norway.}
\email{fabianah@math.uio.no} 
\address{Avi J. Mayorcas: Mathematical Institute, University of Oxford, Oxford, OX2 6GG, UK.}
\email{mayorcas@math.ox.ac.uk}
\begin{document}

\begin{abstract}
  We study distribution dependent stochastic differential equation driven by a continuous process, without any specification on its law, following the approach initiated in \cite{coghi2020pathwise}. We provide several criteria for existence and uniqueness of solutions which go beyond the classical globally Lipschitz setting.
  In particular we show well-posedness of the equation, as well as almost sure convergence of the associated particle system, for drifts satisfying either Osgood-continuity, monotonicity, local Lipschitz or Sobolev differentiability type assumptions.
\end{abstract}

\keywords{Additive noise, pathwise approach, McKean--Vlasov equation, mean field limit.}
\thanks{{\em MSC2020:} Primary: 60H10, 60F15; Secondary: 60K35, 34F05. \\
{\em Acknowledgments:} FH gratefully acknowledges financial support from the STORM project 274410, funded by the Research Council of Norway.
LG is funded by the DFG under Germany's Excellence Strategy - GZ 2047/1, project-id 390685813.}

\maketitle
{
\hypersetup{linkcolor=black}
 \tableofcontents 
}

\section{Introduction}
In this work we consider distribution dependent SDEs (henceforth DDSDEs) of the form
\begin{equation}\label{eq:introddsde}
  X_t = \int_0^t B_s(X_s,\mu_s)\dd s + Y_t,\quad \mu_t=\cL(X_t)
\end{equation}
where $B : \RR_+\times \RR^d \times \cP(\RR^d)\rightarrow \RR^d$, $X$ is the unknown and $Y$ is a continuous stochastic process with prescribed law; $\cL(X_t)$ stands for law of the random variable $X_t$.

In the literature the process $Y$ is typically sampled as $Y_t =\xi + \sqrt{2\eps}\, W_t$, where $\eps\geq 0$, $W$ is a standard Brownian motion and $\xi$ is an $\RR^d$-valued random variable independent of $W$, encoding the initial condition. In this case the DDSDE is also called a McKean--Vlasov SDE, after the pioneering work \cite{mckean1966class} where it was first introduced (setting $\eps=0$ corresponds to the Vlasov equation).

One reason for the importance of McKean--Vlasov equations is their connection to systems of $N$ particles subject to a mean field interaction of the form
\begin{equation}\label{eq:intro-particle-system}
  X^{i,N}_t = \int_0^t B_s\big(X^{i,N}_s, L^N(X^{(N)}_s)\big) \dd s + Y^{i}_t, \quad L^N(X^{(N)}_t):=\frac{1}{N}\sum_{i=1}^N \delta_{X^{i,N}_t}
\end{equation}
for all $i=1,\ldots, N$; where $L^N(X^{(N)}_t)$ stands for the empirical measure of the system at time $t$ and $Y^i$ are usually sampled as i.i.d. copies of $Y$.
One expects the DDSDE \eqref{eq:introddsde} to be the mean field limit of \eqref{eq:intro-particle-system} in the sense that, as $N$ goes to infinity, $L^N(X^{(N)}_t)$ converges weakly to $\cL(X_t)$ with probability $1$.
Moreover for $Y=\xi + \sqrt{2\eps}\, W$, by It\^o calculus, the evolution of the marginal $\cL(X_t)$ is given by the nonlinear Fokker--Planck equation (also called McKean--Vlasov equation)
\begin{equation}\label{eq:intro-PDE}
  \partial_t \rho + \nabla \cdot ((B_t(\,\cdot\,, \rho)\, \rho) = \eps \Delta \rho, \quad \rho_0=\cL(\xi).
\end{equation}
In particular, both \eqref{eq:introddsde} and \eqref{eq:intro-PDE} provide a macroscopic, compact description of system \eqref{eq:intro-particle-system}, reducing its complexity.
For this reason, DDSDEs have found applications in numerous fields, including galaxy formation, plasma physics, $2$D fluid dynamics, bacteria chemotaxis, agent-based modelling, neuroscience, flocking and swarming dynamics; we refer the interested reader to \cite{coghi2020pathwise}, the review \cite{jabin2017mean} and the references therein. In recent years they have also attracted a lot of attention due to their connection to mean-field games \cite{lasry2007mean}.

Classical results concerning the mean-field limit property go back to Dobrushin \cite{dobrushin1979vlasov} for $\eps=0$ and to Sznitman \cite{sznitman1991topics} for general $\eps \geq 0$, under Lipschitz continuity of $B$; let us also mention the work by G\"artner \cite{Gartner1988} which remains among the most general on the topic.

Recently the field has received a lot of attention both from the analytic and probabilistic communities.
On one hand, new methods based on entropy inequalities \cite{fournier2014propagation, jabin2018quantitative, bresch2019mean} and modulated energy \cite{serfaty2017mean, serfaty2020mean} have allowed for the rigorous derivation of mean field limits for fairly singular $B$;
on the other, DDSDEs with very irregular drifts are related to the flourishing field of regularization by noise phenomena, see \cite{BauerMBrandisProske2018,mishura2016existence,rockner2018well,chaudru2020strong,lacker2018strong}.

Most of the works mentioned above rely on exploiting either the connection to the PDE \eqref{eq:intro-PDE} or stochastic analysis tools related to the Brownian motion $W$.
In this work we instead focus on studying the DDSDE \eqref{eq:introddsde} with as minimal assumptions on the noise $Y$ as possible, usually merely continuity and finiteness of moments. 
We are strongly inspired by the recent work \cite{coghi2020pathwise}, where the authors show that in this setting, for globally Lipschitz drifts $B$, \eqref{eq:introddsde} is still the mean field limit of \eqref{eq:intro-particle-system}, although the connection to the PDE \eqref{eq:intro-PDE} breaks down; they present many additional results including the use of common noise, large deviations and a central limit theorem.
Let us stress that the additive structure of the noise $Y$ is such that the DDSDE \eqref{eq:introddsde} in integral form is pathwise meaningful, without any need of stochastic integration; in particular the results apply to naturally non-Markovian, non-martingale choices of $Y$, e.g. fractional Brownian motion.

A fundamental feature of \cite{coghi2020pathwise} is the revisitation of a technique first introduced by Tanaka \cite{tanaka1984Limits}; it allows, by a clever transformation of the underlying probability space, to transfer any stability estimate available for \eqref{eq:introddsde} into a bound for the corresponding empirical measure $L^N(X^{(N)})$ of the system \eqref{eq:intro-particle-system}. The argument is quite robust and allows one to deduce the mean field convergence property under very mild assumptions, even when the family $\{Y^i\}_i$ is not i.i.d. but only requiring
\[
L^N(Y^{(N)})= \frac{1}{N} \sum_{i=1}^N \delta_{Y^i} \rightharpoonup \cL(Y) \quad \PP\text{-a.s.}
\]
together with some finiteness of moments.
The i.i.d. assumption indeed is quite strong, as it implies exchangeability of the system (see \cite[Section I.2]{sznitman1991topics} for a detailed discussion) and sometimes one would prefer to avoid it, see \cite[Example II]{delattre2016note}. In this regard, like \cite{coghi2020pathwise}, our work can be seen as another attempt to advertise the power of Tanaka's idea, which has found applications also in the rough path setting in \cite{cass_lyons_14, bailleul2021propagation}.

As the global Lipschitz assumption considered in \cite{coghi2020pathwise} is quite restrictive, our main goal is conduct a deeper analysis of the DDSDE \eqref{eq:introddsde} providing sufficient conditions for existence, uniqueness and stability, which will also imply the mean field limit. As our setting includes in particular the typical case $Y=\xi + \sqrt{2\eps}\, W$, it also serves as a review of many standard techniques used in McKean--Vlasov SDEs, including Yamada--Watanabe principles, compactness arguments and monotonicity assumptions; we believe one of the merits of this work it to highlight their extreme robustness, as they apply for any given $Y$. On the downside, since our results all apply in the case $\eps=0$, our noise can be very degenerate and we can never go beyond the classical Vlasov setting.

In this regard, another motivation for the current work is to provide a family of ``baseline results'' to be compared to the regularizing features of suitable nondegenerate choices of $Y$; in our second article \cite{GalHarMay_21fBm} we will study in detail the well-posedness of \eqref{eq:introddsde} for highly irregular $B$ when the noise is sampled as a fractional Brownian motion.

However, we stress that this work is more than just a survey of known facts; our aim is also to provide state-of-the-art results. To give an example, for the rest of the introduction we will present a specific result from the paper concerning mean field convergence for McKean--Vlasov equations with potentially degenerate noise.
We deal with a drift $B$ of convolutional structure $B_t(x,\mu)=(b_t\ast \mu)(x)$ and a noise $Y=\xi+W$ given by independent variables $(\xi,W)$ such that $\cL(\xi)(\dd x)= \rho(x) \dd x$ for some density function $\rho$; we make no assumption on $W$ other than continuity.
The corresponding DDSDE becomes
\begin{equation}\label{eq:intro-ddsde-convol}
  X_t = \xi + \int_0^t \int_{\RR^d} b_s(X_s-y) \,\cL(X_s)(\dd y) + W_t.
\end{equation}
We refer to Section \ref{sec:notation} below for details of some notation used in the next statement.
\begin{thm}\label{thm:intro-main}
Assume $b\in L^1_T W^{1,q}_x$, $\div b\in L^1_T L^\infty_x$ and $\rho\in L^p_x$ for $(p,q)\in (1,\infty)$ satisfying
\[
q>d, \quad \frac{1}{p}+\frac{1}{q}<1;
\]
then there exists a unique solution to \eqref{eq:intro-ddsde-convol}, which moreover satisfies
\begin{equation}\label{eq:intro-bound}
  \sup_{t\in [0,T]} \| \cL(X_t)\|_{L^p_x} <\infty.
\end{equation}
Suppose additionally that $\cL(\xi+W)\in \cP_r(C_T)$ for all $r<\infty$ and that we are given a sequence $\{Y^i, i\in \NN\}$ of $C_T$-valued random variables such that, for any $r<\infty$, 
\[
\limsup_{N\to\infty} \| L^N(Y^{(N)})(\omega) \|_r <\infty, \quad L^N(Y^{N})(\omega) \rightharpoonup \cL(\xi +W)\quad \text{for $\PP$-a.e. } \omega;
\]
then any solution $\{X^{i,N}\}$ to \eqref{eq:intro-particle-system} associated to $\{Y^i\}_i$ satisfies
\[
L^N(X^{N})(\omega) \rightharpoonup \cL(X)\quad \text{for $\PP$-a.e. } \omega.
\]
\end{thm}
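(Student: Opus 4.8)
The plan is to treat \eqref{eq:intro-ddsde-convol} as a fixed point problem for the marginal flow $\mu=(\mu_t)_{t\in[0,T]}$: given a candidate flow, the effective drift $b^\mu_s:=b_s\ast\mu_s$ is a prescribed time dependent vector field, and since the noise is additive, conditioning on $W$ (independent of $\xi$) turns the equation into a random ODE whose solution is the flow of $\xi$. The decisive structural observation is that convolution against an $L^p$ density upgrades the Sobolev drift to a spatially Lipschitz one: writing $\nabla b^\mu_s=(\nabla b_s)\ast\mu_s$, Young's inequality gives $\|\nabla b^\mu_s\|_{L^\infty_x}\le\|\nabla b_s\|_{L^q_x}\|\mu_s\|_{L^{q'}_x}$, and the hypothesis $\tfrac1p+\tfrac1q<1$ forces $q'\le p$, so that $\mu_s\in L^1_x\cap L^p_x\subset L^{q'}_x$. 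Thus, once an $L^p$ bound on the marginals is available, the problem falls into an essentially Cauchy--Lipschitz regime, with the caveat that the Lipschitz constant of $b^\mu_s$ is time integrable but depends on $\|\mu_s\|_{L^p_x}$; the whole argument hinges on closing an a priori bound for this quantity.

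First I would establish the a priori estimate \eqref{eq:intro-bound}, uniformly over admissible flows $\mu$. Fixing a realization of $W$, the map sending $\xi=x$ to $X_t$ is the flow $\Phi^W_{0,t}$ of the field $(s,z)\mapsto b^\mu_s(z+W_s)$, whose spatial divergence equals $(\div b_s)\ast\mu_s$ and is hence bounded by $\|\div b_s\|_{L^\infty_x}$. The change of variables formula then yields $\|(\Phi^W_{0,t})_{\#}(\rho\,\dd x)\|_{L^p_x}\le\|\rho\|_{L^p_x}\exp\big(\tfrac{p-1}{p}\|\div b\|_{L^1_T L^\infty_x}\big)$, and averaging over the law of $W$ (Minkowski) gives the same bound for $\|\cL(X_t)\|_{L^p_x}$, with a constant independent of $\mu$ and of the law of $W$. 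This both produces an invariant ball $\{\sup_t\|\mu_t\|_{L^p_x}\le R\}$ for the fixed point map and supplies the conclusion \eqref{eq:intro-bound}. At this regularity the flow and this identity are understood in the DiPerna--Lions/Ambrosio sense, obtained by mollifying $b$ and passing to the limit.

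With the density bound in hand, existence follows by a fixed point or compactness argument on the invariant ball, and uniqueness and stability from a Gr\"onwall estimate in the Wasserstein-$1$ distance. Comparing two solutions $X^1,X^2$ with marginals $\mu^1,\mu^2\in L^p_x$, I would split the drift difference as $b^{\mu^1}_s(X^1_s)-b^{\mu^1}_s(X^2_s)$ plus $\big(b_s\ast(\mu^1_s-\mu^2_s)\big)(X^2_s)$; the first term is controlled by $\Lip(b^{\mu^1}_s)|X^1_s-X^2_s|$, while for the consistency term the maximal function bound for $b_s\in W^{1,q}_x$, together with the pairing $\int M\nabla b_s\,\dd\mu^1_s\le\|M\nabla b_s\|_{L^q_x}\|\mu^1_s\|_{L^{q'}_x}$, renders it \emph{linear} in $\EE|X^1_s-X^2_s|$. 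Both coefficients are time integrable by $b\in L^1_T W^{1,q}_x$ and \eqref{eq:intro-bound}, so Gr\"onwall closes and returns a quantitative dependence of $\cL(X)$ on the driving noise.

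For the mean field limit I would invoke Tanaka's device exactly as advertised in the introduction: on the probability space $(C_T,\nu^N_\omega)$ with $\nu^N_\omega:=L^N(Y^{(N)})(\omega)$, the unique solution of the DDSDE driven by the canonical process has, by uniqueness and the atomic structure of $\nu^N_\omega$, marginal law equal to $L^N(X^{(N)}_\cdot)(\omega)$; coupling it optimally to the genuine solution $X$ driven by $\cL(\xi+W)$ reduces the claim to the stability estimate of the previous step, now with respect to the driving noise. Since $\nu^N_\omega\rightharpoonup\cL(\xi+W)$ $\PP$-a.s. and the moments are uniformly bounded, the coupling cost tends to $0$ and the transfer yields $L^N(X^{(N)})(\omega)\rightharpoonup\cL(X)$ for $\PP$-a.e.\ $\omega$. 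The main obstacle lies precisely here: the empirical marginals $L^N(X^{(N)}_t)(\omega)$ are atomic, hence not in $L^p_x$, so the Lipschitz upgrade of the drift and the $L^{q'}$ pairing above are available only on the genuine solution's side. The estimate must therefore be run asymmetrically --- differentiating and applying the maximal function bound only through the regular marginal $\mu_s=\cL(X_s)$, and absorbing the empirical drift entirely into the consistency term, whose linear character must be preserved despite the atomicity. Should this asymmetric comparison prove too rigid at this regularity, the fallback is a qualitative route: a.s.\ tightness of $\{L^N(X^{(N)})(\omega)\}_N$ from the moment bounds, identification of every limit point as a solution of \eqref{eq:intro-ddsde-convol} by passing to the limit in the empirical equation, and uniqueness from the first part to pin the limit down to $\cL(X)$.
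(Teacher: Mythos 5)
Your architecture coincides with the paper's: the a priori $L^p$ bound via the Jacobian of the flow and $\div b\in L^1_TL^\infty_x$ is exactly Proposition \ref{prop:LawLpApriori}, existence by compactness on mollified drifts is Proposition \ref{prop:PathwiseExistence2}, Gr\"onwall stability through maximal-function estimates is Proposition \ref{thm:MKV-uniqueness-under-Sobolev-bis}, and the transfer to the particle system is Tanaka's device (Proposition \ref{prop:abstract-tanaka}). You also correctly locate the crux: the empirical marginals are atomic, so the comparison must be run asymmetrically through the regular marginal only. But your proposal leaves precisely that point unresolved. The classical Hajlasz inequality $|b(x)-b(y)|\leq c_d\,|x-y|\,(MDb(x)+MDb(y))$ is symmetric: in the coupling integral the term $MDb(x-z)$ gets integrated against the \emph{atomic} measure, where no $L^{q'}$ pairing exists, so the consistency term cannot be made linear in the Wasserstein distance this way. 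The missing ingredient is the genuinely one-sided Lipschitz estimate of Caravenna--Crippa \cite{caravenna2021directional} (Lemma \ref{lem:maximal-function2} here): for $q>d$ there exists $g\in L^q_x$ with $\|g\|_{L^q}\lesssim \|b\|_{W^{1,q}}$ and $|b(x)-b(y)|\leq g(x)|x-y|$ off a negligible set, which lets one differentiate only on the regular side. This is a nontrivial recent result, not a routine rearrangement, and it is the \emph{only} place where $q>d$ enters quantitatively --- tellingly, your write-up never actually uses $q>d$ (your Lipschitz upgrade needs only $1/p+1/q<1$). A further bookkeeping consequence you miss: the resulting stability (Proposition \ref{prop:stability-sobolev-DDSDE}) is in $d_{r'}$ with $r'\in(1,\infty)$ determined by $r/q+1/p\leq 1$, not in $d_1$, which is why the theorem assumes all moments of the inputs finite and why the Gr\"onwall loop must be run in $L^{r'}_\Omega$ rather than on $\EE|X^1_s-X^2_s|$.

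Your fallback route also fails, and for a structural reason. Uniqueness from the first part holds only within the class \eqref{eq:intro-bound}: a weak limit point of the empirical laws can indeed be identified as a solution of \eqref{eq:intro-ddsde-convol} (since $q>d$ gives $b\in L^1_TC^0_b$, the identification passes to the limit), but nothing guarantees that its time marginals lie in $L^p_x$ --- the limits of atomic measures need not be absolutely continuous, and since $W$ is an arbitrary continuous process (possibly $W=0$) there is no smoothing mechanism to restore density bounds. The paper explicitly flags this after Theorem \ref{thm:exist-uniq-DDSDE}: solutions outside the regular class cannot be excluded, which is exactly why Proposition \ref{prop:stability-sobolev-DDSDE} establishes stability of the regular solution against \emph{any} other solution, regular or not, before invoking Tanaka in Corollary \ref{cor:MFL-convol}. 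Without the one-sided estimate, neither your primary asymmetric Gr\"onwall nor your compactness fallback closes the argument.
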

The above statement applies in particular for the choice $W=0$, i.e. when $Y^i$ is taken as i.i.d. copies of $\xi$ and the result becomes a mean field limit for the corresponding Vlasov equation. Although singular drifts have already been considered in such settings, see \cite{hauray2007n,jabin2015particles}, to the best of our knowledge no general statements for $b\in L^1_T W^{1,q}_x$ are available in the literature; in any case, our proof technique is novel and quite simple, as we illustrate below.

As typical of transport PDEs, the assumption $\div b\in L^1_T L^\infty_x$ is used to propagate the regularity $\cL(X_0)\in L^p_x$ and construct a solution $X$ to \eqref{eq:intro-ddsde-convol} satisfying \eqref{eq:intro-bound} by compactness arguments; the regularity $b\in L^1_T W^{1,q}_x$ for $q>d$, together with the one-sided Lipschitz estimates recently obtained in \cite{caravenna2021directional}, is then used to show that this solution $X$ is stable around \textit{any other} solution $\tilde{X}$, not necessarily satisfying \eqref{eq:intro-bound}. Finally, Tanaka's argument allows to translate this information into an estimate for the corresponding particle system and obtain the conclusion.

We conclude this introduction by outlining the structure of the paper. In Section \ref{sec:notation} we briefly recall some notation, conventions and well-known facts used throughout; Section \ref{sec:LipWellPosed} shortly revisits the theory of DDSDEs with Lipschitz drifts, in order to derive useful estimates for the sequel. Section \ref{sec:BenchmarkResults} constitutes the main body of the paper, establishing sufficient conditions for existence and uniqueness in several situations for general $B$. Section \ref{sec:convol} provides some more refined results in the more specific convolutional case $B=b\ast \mu$. Finally, in Section \ref{sec:MFL} we present Tanaka's idea and apply it to all the cases previously considered, deriving mean field limit results for each one. The appendices \ref{app:approximation} and \ref{app:maximal} contain analytic tools used throughout the paper.
\subsection{Notations, conventions and well-known facts}\label{sec:notation}

Throughout the article we will always work on a finite time interval $[0,T]$, that may be arbitrarily large; in particular we do not investigate the long-time behaviour of solutions nor convergence to equilibrium.
We will use the convention that whenever there exists a positive constant $C$ such that $a\leq C b$ we write $a\lesssim b$; if the constant $C$ depends on a significant parameter $p$, we write $a\lesssim_p b$.

For a Banach space $E$ we write $C_TE := C([0,T];E)$ for the space of continuous $E$-valued maps $f:[0,T]\rightarrow E$.
Similarly, for $p\geq 1$ we write $L^p_TE:= L^p(0,T;E)$ for Bochner--Lebesgue space of maps of the same form.
When $E=\RR^d$ we simply write $C_T,\,L^p_T$ etc.

Given a map $f:\RR^d\rightarrow \RR^m$ we write $Df$ for the $d\times m$ matrix of first derivatives. For $p\in [1,\infty]$, we denote by $L^p(\RR^d;\RR^m),\,W^{1,p}(\RR^d;\RR^m)$ (or simply $L^p_x,\,W^{1,p}_x$ when it does not cause confusion) the classical Lebesgue and Sobolev spaces.
We treat the space $W^{1,\infty}_x$ as synonymous with the Lipschitz continuous functions.
Whenever $p \in [1,\infty]$ and where it will not cause confusion, we write $p'$ to denote the dual exponent to $p$, that is $1/p+1/p'=1$, with the interpretation $p=1\iff p'=\infty$.
Similarly, we write $C^0_b$ for the space of bounded, continuous functions from $\RR^d$ to $\RR^m$; $f\in C^1_b$ if $f$ and $D f$ belong to $C^0_b$.

Given a separable Banach space $E$, we denote by $\cP(E)$ the set of probability measures over $E$; we write $\mu_n\rightharpoonup \mu$ for weak convergence of measures; i.e. convergence against continuous bounded functions.

Given $\mu,\nu\in \cP(E)$, $\Pi(\mu,\nu)$ stands for the set of all possible couplings of $(\mu,\nu)$, i.e. the subset of $\cP(E\times E)$ with first and second marginals given respectively by $\mu$ and $\nu$. For any $p\in [1,\infty)$, we define
\begin{equation*}
  d_p(\mu,\nu):=\inf_{m\in \Pi(\mu,\nu)} \bigg(\int_{E\times E} \|x-y\|^p_{E}\, m(\dd x,\dd y)\bigg)^{1/p}
\end{equation*}
which is a well defined quantity (possibly taking value $+\infty$). By \cite[Theorem 4.1]{villani2008optimal}, an optimal coupling $\bar{m}\in \Pi(\mu,\nu)$ realizing the above infimum always exists. Noting that we may equally describe the set $\Pi(\mu,\nu)$ as the collection of all $E$-valued random variables $X,\,Y$ defined on the same probability space, such that $\cL(X)=\mu$ and $\cL(Y)=\nu$, we obtain the frequently used inequality
\begin{equation*}
  d_p(\mu,\nu) \leq \EE[\|X-Y\|_E^p]^{1/p}\quad \forall\, (X,Y) \in \Pi(\mu,\nu).
\end{equation*}
We define $\cP_p(E)$ to be set of $p$-integrable probability measures; that is, $\mu\in \cP(E)$ s.t.
\begin{equation*}
  \| \mu\|_p := \bigg( \int_{E} \|x\|_E^p \, \mu(\dd x)\bigg)^{1/p}<\infty. 
\end{equation*}
It is well known that $d_p(\mu,\nu)<\infty$ for $\mu,\nu\in \cP_p(E)$ and that $(\cP_p(E),d_p)$ is a complete metric space, usually referred to as the $p$-Wasserstein space on $E$; let us stress however that our definition of $d_p(\mu,\nu)$ holds for all $\mu,\nu\in\cP(E)$. We recall that, given a sequence $\{\mu_n\}_n\subset \cP_p(E)$, $d_p(\mu_n,\mu)\to 0$ is equivalent to $\mu_n\rightharpoonup \mu$ weakly and $\| \mu_n\|_p\to \|\mu\|_p$, see \cite[Theorem 6.9]{villani2008optimal}.

Given $\mu\in \cP(\RR^d)$, with a slight abuse of notation we will write $\mu\in L^q(\RR^d)$ (or simply $L^q_x$) for $q\in [1,\infty]$ to indicate that $\mu$ admits a density $\mu(\dd x)=\rho(x)\dd x$ with respect to the $d$-dim. Lebesgue measure, such that $\rho\in L^q(\RR^d)$. 

Throughout the article, whenever not mentioned explicitly, we will consider an underlying probability space $(\Omega,\cF,\PP)$, where the $\sigma$-algebra $\cF$ is $\PP$-complete, and $\cL(X)$ is the push-forward measure of a random variable $X$ on this probability space, i.e. $\cL_{\PP}(X)=\cL(X)=\PP\circ X^{-1}=:X \#\PP$.
For $p\geq 1$, we will frequently consider $E$-valued random variables in the space $L^p(\Omega,\cF,\PP;E)$ which for notational simplicity we will denote by $L^p_\Omega E$.
We say that a sequence of random variables $\{X^n\}_{n}$ converges to $X$ in law if $\cL(X^n)\rightharpoonup \cL(X)$.
\section{Well-Posedness Under Lipschitz Assumptions}\label{sec:LipWellPosed}
We start by treating DDSDEs with Lipschitz continuous drifts. Although this case was already treated in \cite{coghi2020pathwise}, we revisit it in order to derive useful a priori estimates for the sequel; to avoid repetitiveness, we will also present alternative proofs to those of \cite{coghi2020pathwise}.

In this section we consider the DDSDE
\begin{equation}\label{eq:MKV}
X_t = \int_0^t B_s(X_s,\cL(X_s))\dd s + Y_t
\end{equation}
under the assumption that $B:[0,T]\times\RR^d\times\cP_p(\RR^d)\to\RR^d$ satisfies
\begin{equation}\label{eq:ass basic1}
|B_t(x,\mu)-B_t(x',\nu)|\leq g_t\, (|x-x'|+d_p(\mu,\nu)),\quad |B_t(x,\mu)|\leq h_t\, (1+|x|+\|\mu\|_p)
\end{equation}
for some $p\in [1,\infty)$ and some $g,h\in L^1(0,T;\RR^d)$.

Although the functions $g,h$ could be unified, we keep them separate to highlight the differing roles they play in the following estimates. Let us also point out that if $B$ is of the form $B_t(x,\mu)=\int b_t(x,y)\mu(\dd y)$, then sufficient conditions for \eqref{eq:ass basic1} to hold for $p=1$ (and thus for any $p\in [1,\infty)$) are
\begin{equation*}
|b_t(x,y)-b_t(x',y')| \leq g_t (|x-x'|+|y-y'|),\quad |b_t(x,y)|\leq h_t (1+|x| + |y|).
\end{equation*}

We refrain for now from providing a rigorous definition of solution to \eqref{eq:MKV}, which will be discussed in detail in Section \ref{sec:BenchmarkResults}; for now we note that thanks to the assumptions on $B$, the integral appearing in \eqref{eq:MKV} is pathwise meaningful and thus, given $Y\in L^p(\Omega,\cF,\PP;C_T)=: L^p_\Omega C_T$ for some probability space $(\Omega,\cF,\PP)$, we say that $X\in L^p_\Omega C_T$ solves \eqref{eq:MKV} if the identity holds $\PP$-a.s.

\begin{prop}\label{prop:MKV basic result}
Assume $B$ satisfies \eqref{eq:ass basic1}; then for any $Y\in L^p_\Omega C_T$ there exists a unique $X\in L^p_\Omega C_T$ solving \eqref{eq:MKV}. Moreover given $Y^1,Y^2$ as above, denoting by $X^1,X^2$ the associated solutions, it holds that
\begin{equation}\label{eq:MKV basic est2}
\EE[\| X^1 -X^2\|_{C_T}^p]^{1/p} \leq e^{2 \|g\|_{L^1_T}} \, \EE[\| Y^1-Y^2\|_{C_T}^p]^{1/p}.
\end{equation}
\end{prop}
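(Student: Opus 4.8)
The plan is to set up a fixed-point argument on $L^p_\Omega C_T$. Given $Y\in L^p_\Omega C_T$, define a map $\Gamma$ that sends a candidate process $Z\in L^p_\Omega C_T$ to the solution of the \emph{frozen} (non-distribution-dependent) equation
\begin{equation*}
  (\Gamma Z)_t = \int_0^t B_s\big((\Gamma Z)_s, \cL(Z_s)\big)\dd s + Y_t.
\end{equation*}
Because the measure argument $\cL(Z_s)$ is now a prescribed (rather than self-referential) input, the first estimate in \eqref{eq:ass basic1} makes $x\mapsto B_s(x,\cL(Z_s))$ Lipschitz with integrable-in-time constant $g_s$, so for each fixed $Z$ this is a classical ODE with an integrable Lipschitz drift; existence and uniqueness of $\Gamma Z$ follow by the standard Carath\'eodory/Picard theory applied pathwise, and the linear growth bound in \eqref{eq:ass basic1} together with a Gr\"onwall argument ensures $\Gamma Z \in L^p_\Omega C_T$, so $\Gamma$ is well defined. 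First I would verify this well-definedness carefully, since it underpins everything else.

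The heart of the matter is to show $\Gamma$ is a contraction (after possibly passing to a short time interval, or equivalently by working with a suitably weighted norm). For two inputs $Z^1,Z^2$ I would write, for fixed $\omega$,
\begin{equation*}
  |(\Gamma Z^1)_t - (\Gamma Z^2)_t| \leq \int_0^t \big|B_s((\Gamma Z^1)_s,\cL(Z^1_s)) - B_s((\Gamma Z^2)_s,\cL(Z^2_s))\big|\dd s,
\end{equation*}
and then use \eqref{eq:ass basic1} to bound the integrand by $g_s\big(|(\Gamma Z^1)_s-(\Gamma Z^2)_s| + d_p(\cL(Z^1_s),\cL(Z^2_s))\big)$. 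Applying Gr\"onwall's inequality in the pathwise variable controls $\|\Gamma Z^1-\Gamma Z^2\|_{C_T}$ by $\int_0^T g_s\, d_p(\cL(Z^1_s),\cL(Z^2_s))\dd s$ times $e^{\|g\|_{L^1_T}}$. The key step is then to take $L^p_\Omega$-norms and use the coupling inequality $d_p(\cL(Z^1_s),\cL(Z^2_s)) \leq \EE[|Z^1_s-Z^2_s|^p]^{1/p} \leq \EE[\|Z^1-Z^2\|_{C_T}^p]^{1/p}$, valid because $(Z^1,Z^2)$ are defined on the same space and hence form an admissible coupling of their laws. This produces a bound of the form $\EE[\|\Gamma Z^1-\Gamma Z^2\|_{C_T}^p]^{1/p} \lesssim \big(\int_0^T g_s\dd s\big) e^{\|g\|_{L^1_T}}\EE[\|Z^1-Z^2\|_{C_T}^p]^{1/p}$; the factor $\int_0^T g_s\dd s$ can be made $<1$ by restricting to a short enough interval $[0,\tau]$ (using absolute continuity of $s\mapsto\int_0^s g_r\dd r$), giving a contraction on $L^p_\Omega C_\tau$, and one then iterates over successive intervals to cover $[0,T]$.

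For the stability estimate \eqref{eq:MKV basic est2}, let $X^1,X^2$ be the solutions driven by $Y^1,Y^2$. Subtracting the two equations and using \eqref{eq:ass basic1} exactly as above yields, pathwise,
\begin{equation*}
  |X^1_t-X^2_t| \leq |Y^1_t-Y^2_t| + \int_0^t g_s\big(|X^1_s-X^2_s| + d_p(\cL(X^1_s),\cL(X^2_s))\big)\dd s.
\end{equation*}
Bounding $d_p(\cL(X^1_s),\cL(X^2_s)) \leq \EE[\|X^1-X^2\|_{C_T}^p]^{1/p}$ via the same coupling inequality and taking the supremum over $t$ then the $L^p_\Omega$-norm, one gets a self-referential inequality in the single quantity $F:=\EE[\|X^1-X^2\|_{C_T}^p]^{1/p}$; a Gr\"onwall/fixed-point manipulation in $F$ (where the Lipschitz-in-law term contributes the second factor of $e^{\|g\|_{L^1_T}}$, and the Lipschitz-in-space term contributes the first) yields the claimed constant $e^{2\|g\|_{L^1_T}}$. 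I expect the main obstacle to be purely bookkeeping: keeping the two $g$-contributions (pathwise Gr\"onwall in space versus the nonlocal law-dependence) cleanly separated so that the exponents combine to give exactly $2\|g\|_{L^1_T}$ rather than a larger constant, and ensuring the measurability needed to treat $(X^1,X^2)$ as a coupling throughout.
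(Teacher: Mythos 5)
Your existence/uniqueness argument is correct but follows a genuinely different route from the paper: you freeze the law, solve a classical Carath\'eodory ODE pathwise to define a two-level map $\Gamma$, and contract on short intervals. The paper instead runs a one-level fixed point directly on the process, $\cI(X)_\cdot = \int_0^\cdot B_s(X_s,\cL(X_s))\dd s + Y_\cdot$, and makes $\cI$ a global contraction on all of $[0,T]$ by equipping $C_T$ with the weighted norm $\sup_t e^{-4\int_0^t g_r \dd r}|x_t|$ (which you mention as an alternative but do not carry out). Your route buys a cleaner separation between the classical ODE theory and the McKean--Vlasov self-consistency; the paper's weighted norm buys a one-shot argument with no interval-splitting or concatenation bookkeeping. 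Either is fine for the first assertion of the proposition.

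The genuine gap is in your stability step, and it is exactly the issue you flag at the end. Collapsing to the single global quantity $F := \EE[\|X^1-X^2\|_{C_T}^p]^{1/p}$ destroys the time-localization that Gr\"onwall needs: bounding $d_p(\cL(X^1_s),\cL(X^2_s))\leq F$ and applying your pathwise Gr\"onwall gives
\begin{equation*}
F \leq e^{\|g\|_{L^1_T}}\Big( \EE[\|Y^1-Y^2\|_{C_T}^p]^{1/p} + \|g\|_{L^1_T}\, F\Big),
\end{equation*}
which is vacuous unless $\|g\|_{L^1_T}\, e^{\|g\|_{L^1_T}}<1$, and even in that regime yields the constant $e^{\|g\|_{L^1_T}}/(1-\|g\|_{L^1_T} e^{\|g\|_{L^1_T}})$, not $e^{2\|g\|_{L^1_T}}$. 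The variant where you first do a pathwise Gr\"onwall in the space variable and then a second Gr\"onwall in the law term is no better: it produces $e^{\|g\|_{L^1_T}}\exp\big(\|g\|_{L^1_T} e^{\|g\|_{L^1_T}}\big)$, strictly larger than the claimed constant whenever $g\not\equiv 0$. The missing idea, which is how the paper proceeds, is to keep the \emph{running} quantity $\phi(t):=\EE[\sup_{r\leq t}|X^1_r-X^2_r|^p]^{1/p}$ throughout: bound $d_p(\cL(X^1_s),\cL(X^2_s))\leq \EE[\sup_{r\leq s}|X^1_r-X^2_r|^p]^{1/p}=\phi(s)$ (coupling inequality with the supremum only up to time $s$), take the $L^p_\Omega$-norm of the pathwise supremum, and use Minkowski's integral inequality to pass the norm inside the time integral. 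The space term and the law term then merge into a single integrand $2g_s\,\phi(s)$, giving $\phi(t)\leq \int_0^t 2g_s\,\phi(s)\,\dd s + \EE[\|Y^1-Y^2\|_{C_T}^p]^{1/p}$, and one application of Gr\"onwall yields \eqref{eq:MKV basic est2} with exactly $e^{2\|g\|_{L^1_T}}$, with no smallness restriction. The factor $2$ must appear \emph{inside} the time integral before Gr\"onwall is invoked, not as a product of two exponentials computed sequentially.
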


\begin{proof}
Given $Y$ as above, we define a map $\cI$ on $L^p_\Omega C_T$, by
\[
\cI(X)_{\,\cdot\,} := \int_0^\cdot B_s(X_s,\cL(X_s))\dd s + Y_{\,\cdot\,};
\]
it is immediate to check, using assumption \eqref{eq:ass basic1}, that $\cI$ maps $L^p_\Omega C_T$ into itself. Let us endow $C_T$ with the equivalent norm
\[
\| x \tilde{\|}_{C_T} := \sup_{t\in [0,T]} \Big\{ e^{ -4 \int_0^t g_r \dd r}\, |x_s|\Big\},
\]
and by extension $L^p_\Omega C_T$ with the norm $\|\cdot \tilde{\|}_{L^p C_T}$. We claim that $\cI$ is a contraction on $(L^p_\Omega C_T, \| \cdot\tilde{\|}_{L^p C_T})$, which implies existence and uniqueness of a solution $X$ by the Banach--Caccioppoli theorem.

By the definition of $\| \cdot\tilde{\|}_{C_T}$, for any $X^1,X^2\in L^p_\Omega C_T$,
\[
d_p(\cL(X^1_t),\cL(X^2_t))
\leq \EE[ |X^1_t-X^2_t|^p]^{1/p} \leq e^{4\int_0^t g_r\,\dd r} \| X^1-X^2\tilde{\|}_{L^p C_T};
\]
therefore for any $X^1,X^2$, by assumption \eqref{eq:ass basic1} it holds that
\begin{align*}
e^{-4\int_0^t g_r \dd r} |\cI(X^1)_t - \cI(X^2)_t| & \leq e^{-4\int_0^t g_s \dd s} \int_0^t |B_s(X^1_s,\cL(X^1_s)) - B_s(X^2_s,\cL(X^2_s))| \dd s\\
& \leq e^{-4\int_0^t g_r \dd r} \int_0^t g_s\, [ |X^1_s-X^2_s| + d_p(\cL(X^1_x),\cL(X^2_s))] \dd s\\
& \leq e^{-4\int_0^t g_r \dd r} \bigg( \int_0^t g_s\, e^{4 \int_0^s g_r\dd r} \dd s\bigg)\, [\| X^1-X^2\tilde{\|}_{C_T} + \| X^1 -X^2\tilde{\|}_{L^p C_T}]\\
& \leq \frac{1}{4}\, [\| X^1-X^2\tilde{\|}_{C_T} + \| X^1 -X^2\tilde{\|}_{L^p C_T}].
\end{align*}
Taking first the supremum over $t$ on the l.h.s., then the $L^p_\Omega$-norm on both sides, we obtain
\[
\| \cI(X^1)-\cI(X^2)\tilde{\|}_{L^p C_T} \leq \frac{1}{2} \| X^1-X^2\tilde{\|}_{L^p C_T}
\]
showing that $\cI$ is a contraction.

Now let $X^i$ be solutions associated to $Y^i$ for $i=1,2$. Their difference satisfies
\begin{align*}
|X^1_t - X^2_t|
& \leq \int_0^t |B_s(X^1_s,\cL(X^1_s))-B_s(X^2_s,\cL(X^2_s))| \dd s + |Y^1_t-Y^2_t|\\
& \leq \int_0^t g_s \bigg (\sup_{r\leq s} |X^1_r-X^2_r| + \EE\Big[\sup_{r\leq s} |X^1_r-X^2_r|^p\Big]^{1/p} \bigg) \dd s + \| Y^1-Y^2\|_{C_T};
\end{align*}
taking the $L^p_\Omega$-norm on both sides, applying Minkowski's integral inequality, we arrive at
\begin{align*}
\EE\Big[\sup_{r\leq t} |X^1_r-X^2_r|^p\Big]^{1/p} \leq \int_0^t 2 g_s\, \EE\Big[\sup_{r\leq s} |X^1_r-X^2_r|^p\Big]^{1/p} \dd s  + \| Y^1-Y^2\|_{C_T}.
\end{align*}
Inequality \eqref{eq:MKV basic est2} then follows from an application of Gr\"onwall's lemma.
\end{proof}

So far we have regarded $X,Y$ as $C_T$-valued random variables on an abstract probability space $(\Omega,\cF,\PP)$; as the result holds on any such space, we can choose it to be $(C_T, \cB(C_T), \mu^Y)$, where $\mu^Y=\cL(Y)$ and $Y$ is realized as the canonical process.

Let us define the drift $\bar{b}(s, x):= B_s(x,\cL(X_s))$; by assumption \eqref{eq:ass basic1}, $\bar{b}$ also satisfies a standard Lipschitz-type condition, so by classical ODE theory, for any $\omega\in C_T$ there exists a unique solution $x\in C_T$ to
\[
x_\cdot = \int_0^\cdot \bar{b}(s,x_s)\dd s+ \omega_\cdot
\]
which can be expressed as $x_t = \Phi(t,\omega)$ for some continuous map $\Phi:[0,T]\times C_T\to C_T$.

Since $X$ is a solution to \eqref{eq:MKV}, it satisfies
\[
X_\cdot(\omega) = \int_0^\cdot \bar{b}(s,X_s(\omega))\dd s + Y_\cdot(\omega) \quad \text{ for } \PP\text{-a.e. } \omega\in\Omega
\]
implying that it satisfies $X(\omega) = \Phi(\cdot,Y(\omega))$. In particular, since $X$ can be expressed as a measurable function of $Y$, its law $\cL(X)=\mu^X = \Phi \#\mu^Y$ on $C_T$ is entirely determined by $\mu^Y$. As a consequence, we can regard the solution map $S^B$ coming from Proposition \ref{prop:MKV basic result} not only as a map $L^p_\Omega C_T$ to itself, but also as a map from $\cP_p(C_T)$ to itself, $S^B(\mu^Y)=\mu^X$, thus independent of the specific probability space in consideration.

We then obtain the following estimate, recovering the stability results from \cite{coghi2020pathwise}.

\begin{cor}
Assume $B$ satisfies \eqref{eq:ass basic1} and let $S$ be defined as above, then
\begin{equation*}
d_p(S(\mu^1),S(\mu^2)) \leq e^{2\| g\|_{L^1_T}} \, d_p(\mu^1,\mu^2)\quad \forall\, \mu^1,\mu^2\in \cP_p(C_T).
\end{equation*}
\end{cor}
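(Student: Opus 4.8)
The plan is to use the characterization of $d_p$ via couplings together with the pathwise stability estimate \eqref{eq:MKV basic est2} already established in Proposition \ref{prop:MKV basic result}. The key conceptual point, which makes the corollary an essentially immediate consequence of the Proposition, is the observation made just above: the law $\cL(X)$ of a solution depends \emph{only} on the law $\cL(Y)=\mu^Y$ of the noise, so that $S(\mu^Y)=\cL(X)$ is well defined as a map $\cP_p(C_T)\to\cP_p(C_T)$ independently of the underlying probability space. This freedom to choose the probability space is exactly what lets us realize an optimal coupling.

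Fix $\mu^1,\mu^2\in\cP_p(C_T)$; since these are $p$-integrable, $d_p(\mu^1,\mu^2)<\infty$ and, by \cite[Theorem 4.1]{villani2008optimal}, there is an optimal coupling $\bar m\in\Pi(\mu^1,\mu^2)$. Equivalently, I would work on the probability space $(C_T\times C_T,\cB(C_T\times C_T),\bar m)$ and take $Y^1,Y^2$ to be the two canonical projections, so that $\cL(Y^i)=\mu^i$ and
\[
\EE\big[\|Y^1-Y^2\|_{C_T}^p\big]^{1/p}=d_p(\mu^1,\mu^2).
\]
Both $Y^1,Y^2$ lie in $L^p_\Omega C_T$, so Proposition \ref{prop:MKV basic result} furnishes unique solutions $X^1,X^2\in L^p_\Omega C_T$ to \eqref{eq:MKV} driven by $Y^1,Y^2$, together with the stability bound \eqref{eq:MKV basic est2}.

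The conclusion then follows by chaining three facts. First, because $S$ depends only on the law of the noise, $\cL(X^i)=S(\mu^i)$; hence $(X^1,X^2)$ is an admissible coupling of $(S(\mu^1),S(\mu^2))$ and the coupling inequality for $d_p$ recalled in Section \ref{sec:notation} gives $d_p(S(\mu^1),S(\mu^2))\leq\EE[\|X^1-X^2\|_{C_T}^p]^{1/p}$. Second, \eqref{eq:MKV basic est2} bounds the right-hand side by $e^{2\|g\|_{L^1_T}}\,\EE[\|Y^1-Y^2\|_{C_T}^p]^{1/p}$. Third, by the optimality of $\bar m$ the latter equals $e^{2\|g\|_{L^1_T}}\,d_p(\mu^1,\mu^2)$, yielding the claim. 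There is no real obstacle here: the only point requiring care is the identification $\cL(X^i)=S(\mu^i)$, i.e. that the solution built on the coupled space has the prescribed law, which is precisely the probability-space-independence of $S$ established through the measurable solution map $\Phi$ in the paragraph preceding the statement.
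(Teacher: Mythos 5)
Your proof is correct and follows essentially the same route as the paper: realize $Y^1,Y^2$ as canonical variables on $C_T\times C_T$ under a coupling of $(\mu^1,\mu^2)$, apply the stability estimate \eqref{eq:MKV basic est2} from Proposition \ref{prop:MKV basic result}, and conclude via the optimal coupling (the paper states the bound for an arbitrary coupling and then optimizes, while you fix the optimal coupling from the start --- an immaterial difference). Your explicit remark that $\cL(X^i)=S(\mu^i)$ rests on the probability-space-independence of $S$ is exactly the point the paper handles in the discussion preceding the corollary.
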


\begin{proof}
Given any coupling $m\in \cP_p(C_T\times C_T)$ of $\mu^1,\mu^2$, considering $Y^1,Y^2$ as the canonical variables on $C_T\times C_T$, by Proposition \ref{prop:MKV basic result} we have
\[
d_p(S(\mu^1,S(\mu^2)) \leq \EE[\|X^1-X^2\|_{C_T}^p]^{1/p} \leq e^{2\| g\|_{L^1_T}} \EE[\| Y^1-Y^2\|_{C_T}^p]^{1/p};
\]
the conclusion follows by choosing $m$ to be the optimal coupling for $d_p(\mu^1,\mu^2)$.
\end{proof}

By the above discussion, in what follows we could always deal with measures $\mu\in \cP_p(C_T)$; however for the sake of computations we prefer to keep working with random variables $X,Y$ defined on an abstract probability space (which can be taken as the canonical one if needed).

Given $h\in L^1(0,T;\RR^d)$, we denote by $f_h$ the associated modulus of continuity, namely for any $\delta>0$,
\[f_h(\delta) := \sup_{s,t\in [0,T], |t-s|<\delta} \int_s^t |h_r|\dd r;
\]
let us also define, for any $x\in C_T$, the associate seminorm
\[
\llbracket x\rrbracket_h := \sup_{s,t\in [0,T], s\neq t} \frac{|x_t-x_s|}{f_h(|t-s|)}
\]
\begin{lem}\label{lem:MKV basic apriori estimates}
Let $B$ satisfy \eqref{eq:ass basic1}, $Y\in L^p_\Omega C_T$; then there exist a constant $C$, only depending on $\| h\|_{L^1_T}$, such that the solution $X$ to \eqref{eq:MKV} satisfies
\begin{equation}\label{eq:MKV basic apriori1}
\EE[ \| X\|_{C_T}^p]^{1/p} \leq C ( 1 + \EE[\| Y\|_{C_T}^p]^{1/p})
\end{equation}
as well as the pathwise bound
\begin{equation}\label{eq:MKV basic apriori2}
\| X(\omega)\|_{C_T} \leq C\, \big(1+\| Y(\omega)\|_{C_T}\big) \quad\text{for $\PP$-a.e. }\omega\in\Omega.
\end{equation}
Moreover, $X=Z+Y$ for some $Z\in L^p_\Omega C_T$ satisfying $Z_0=0$ and
\begin{equation}\label{eq:MKV basic apriori3}
\EE [\, \llbracket Z\rrbracket_h ^p]^{1/p} \leq C\, \EE[\, \| Y\|_{C_T}^p ]^{1/p}.
\end{equation}
\end{lem}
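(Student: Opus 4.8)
The plan is to obtain all three estimates from Gr\"onwall-type inequalities, exploiting the structural fact that for a \emph{fixed} solution $X$ the marginal law $\cL(X_s)$ is a deterministic object, so that the nonlinear feedback enters the dynamics only through the single scalar moment $\|\cL(X_s)\|_p = \EE[|X_s|^p]^{1/p}$. The natural order is to first establish the moment bound \eqref{eq:MKV basic apriori1}, which controls $\sup_{s\le T}\|\cL(X_s)\|_p$, and then feed this back into the two pathwise statements \eqref{eq:MKV basic apriori2} and \eqref{eq:MKV basic apriori3}, where the law can now be treated as a frozen, bounded coefficient.

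For \eqref{eq:MKV basic apriori1} I would take absolute values in \eqref{eq:MKV}, apply the linear growth bound from \eqref{eq:ass basic1}, and pass to the supremum over $r\le t$ to get
\[
\sup_{r\le t}|X_r| \le \int_0^t h_s\big(1 + |X_s| + \|\cL(X_s)\|_p\big)\dd s + \|Y\|_{C_T}.
\]
Using $\|\cL(X_s)\|_p = \EE[|X_s|^p]^{1/p}$, taking the $L^p_\Omega$-norm, and applying Minkowski's integral inequality yields a closed scalar inequality for $\phi(t):=\EE[\sup_{r\le t}|X_r|^p]^{1/p}$, namely $\phi(t)\le \|h\|_{L^1_T} + 2\int_0^t h_s\,\phi(s)\dd s + \EE[\|Y\|_{C_T}^p]^{1/p}$. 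Since $X\in L^p_\Omega C_T$ guarantees $\phi(T)<\infty$, Gr\"onwall's lemma closes the estimate with a constant depending only on $\|h\|_{L^1_T}$.

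For \eqref{eq:MKV basic apriori2}, once \eqref{eq:MKV basic apriori1} is known the quantity $\bar C:=\sup_{s\le T}\|\cL(X_s)\|_p \le \EE[\|X\|_{C_T}^p]^{1/p}$ is a finite deterministic constant. Running the same growth estimate \emph{pathwise}, i.e. without taking expectations, gives for $\PP$-a.e.\ $\omega$ the inequality $\sup_{r\le t}|X_r(\omega)| \le (1+\bar C)\|h\|_{L^1_T} + \int_0^t h_s \sup_{r\le s}|X_r(\omega)|\dd s + \|Y(\omega)\|_{C_T}$, and a pathwise Gr\"onwall argument yields \eqref{eq:MKV basic apriori2}. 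For \eqref{eq:MKV basic apriori3}, I would set $Z:=X-Y$, so that $Z_0=0$ and $Z_t=\int_0^t B_s(X_s,\cL(X_s))\dd s$. For $s\ne t$ the growth bound gives
\[
|Z_t-Z_s| \le \big(1+\|X\|_{C_T}+\bar C\big)\int_s^t |h_r|\dd r;
\]
the absolute continuity of $r\mapsto\int_0^r|h_u|\dd u$ shows $\int_s^t|h_r|\dd r\le f_h(|t-s|)$, whence $\llbracket Z\rrbracket_h \le 1+\|X\|_{C_T}+\bar C$ holds $\PP$-a.s. Taking the $L^p_\Omega$-norm and invoking \eqref{eq:MKV basic apriori1} concludes.

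The step I expect to be the main obstacle is conceptual rather than computational: the self-referential feedback through $\|\cL(X_s)\|_p$ is a global (expectation) quantity and therefore prevents a direct pathwise Gr\"onwall argument, which is precisely why the moment bound \eqref{eq:MKV basic apriori1} must be proved first and used to freeze the law. I would also flag that, because of this feedback, the constants in the pathwise bounds \eqref{eq:MKV basic apriori2}--\eqref{eq:MKV basic apriori3} genuinely inherit the finite moment $\EE[\|Y\|_{C_T}^p]^{1/p}$ through $\bar C$, so that the cleanest honest form of these estimates carries a factor $1+\EE[\|Y\|_{C_T}^p]^{1/p}$; the remaining point is the elementary verification that $\int_s^t|h_r|\dd r\le f_h(|t-s|)$ for the seminorm estimate.
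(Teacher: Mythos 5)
Your proof is correct and is essentially the paper's own argument: Minkowski's integral inequality followed by Gr\"onwall for \eqref{eq:MKV basic apriori1}, a pathwise Gr\"onwall with the marginal laws frozen (and controlled via \eqref{eq:MKV basic apriori1}) for \eqref{eq:MKV basic apriori2}, and the direct increment bound $|Z_t-Z_s|\le \big(1+\|X\|_{C_T}+\sup_{s}\|\cL(X_s)\|_p\big)\int_s^t|h_r|\dd r$ for \eqref{eq:MKV basic apriori3}. Your closing flag is also well taken and applies verbatim to the paper's proof: since the drift feels $\|\cL(X_s)\|_p$, the constant in \eqref{eq:MKV basic apriori2} must genuinely depend on $\EE[\|Y\|_{C_T}^p]^{1/p}$ and not only on $\|h\|_{L^1_T}$ (e.g.\ $B_t(x,\mu)=h_t\|\mu\|_p$ with $Y=\xi$ equal to a large constant $R$ or to $0$ with probability $1/2$ each forces $\|X(\omega)\|_{C_T}\gtrsim R$ on the event $\{\xi=0\}$), and \eqref{eq:MKV basic apriori3} as stated even misses an additive constant (for $B_t(x,\mu)=h_t$ and $Y\equiv 0$ one has $\llbracket Z\rrbracket_h=1$ while the stated right-hand side vanishes); the estimate that both you and the paper actually prove, with right-hand side $C\big(1+\EE[\|Y\|_{C_T}^p]^{1/p}\big)$, is the one used later in \eqref{eq:PeanoAPriori}.
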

\begin{proof}
Let $X$ be a solution, then by assumption \eqref{eq:ass basic1}
\begin{equation}\label{eq:basic-proof}\begin{split}
\sup_{r\leq t} |X_r|
& \leq \int_0^t h_s (1+ |X_s| + \| \cL(X_s)\|_p )\dd s + |Y_t|\\
& \leq \int_0^t h_s \bigg(1+ \sup_{r\leq s} |X_r| + \EE\Big[\sup_{r\leq s} |X_r|^p\Big]^{1/p}\bigg) \dd s + \| Y\|_{C_T}.
\end{split}\end{equation}
Taking the $L^p_\Omega$-norm on both sides, applying Minkowski's integral inequality, we find
\begin{equation*}
\EE\Big[\sup_{r\leq t} |X_r|^p\Big]^{1/p} \leq \int_0^t h_t\bigg(1+2 \EE\Big[\sup_{r\leq s} |X_r|^p\Big]^{1/p}\bigg) \dd s + \EE[\| Y\|_{C_T}^p]^{1/p};
\end{equation*}
inequality \eqref{eq:MKV basic apriori1} then follows by Gr\"onwall's lemma, with $C= \exp(2\|h\|_{L^1_T}) (1+\|h\|_{L^1_T})$.

On the other hand, we can apply Gr\"onwall's lemma at a pathwise level to \eqref{eq:basic-proof} to find
\begin{align*}
\| X(\omega)\|_{C_T} \leq e^{\|h\|_{L^1_T}} \bigg(\| h\|_{L^1_T} + \EE[\| X\|_{C_T}^p]^{1/p} + \| Y(\omega)\|_{C_T}\bigg)
\end{align*}
which together with \eqref{eq:MKV basic apriori1} implies \eqref{eq:MKV basic apriori2}, for a suitable choice of $C$.

Finally, since $Z_t = \int_0^t B_s(X_s,\cL(X_s))$, applying again \eqref{eq:ass basic1} we find
\begin{align*}
|Z_t-Z_s| \leq \int_s^t |h_r| \dd r (1+ \| X\|_{C_T} + \EE[\| X\|_{C_T}^p]^{1/p});
\end{align*}
combined with \eqref{eq:MKV basic apriori1}, \eqref{eq:MKV basic apriori2} this implies the pathwise bound
\begin{align*}
\llbracket Z(\omega) \rrbracket \leq 1 + \| X(\omega)\|_{C_T} + \EE[\| X\|_{C_T}^p]^{1/p} \lesssim_h 1 + \| Y(\omega)\|_{C_T} + \EE[\| Y\|_{C_T}^p]^{1/p}.
\end{align*}
Inequality \eqref{eq:MKV basic apriori3} readily follows taking the $L^p_\Omega$-norm on both sides.
\end{proof}

\section{Refined criteria for existence and uniqueness}\label{sec:BenchmarkResults}

In many applications one cannot expect the drift $B$ to be globally Lipschitz, thus making the results of Section \ref{sec:LipWellPosed} not applicable.
Motivated by this fact, in this section we present several alternative conditions to establish existence and/or uniqueness of solutions to \eqref{eq:MKV}.

The first part of this section is focused on the different concepts of solution and the relations between them; Section \ref{subsec:ContDrift} provides a general result of existence of weak solutions under continuity and linear growth assumptions; Section \ref{subsec:uniqueness} deals with several refinements of the globally Lipschitz condition which are still sufficient to establish uniqueness. While we treat existence and uniqueness as separate problems, we provide example well-posedness results that combine the two. We leave open the possibility that ad-hoc existence or uniqueness results may be found in more specific cases, complementing the general results contained here.

The equations considered here are of the general form 
\begin{equation}\label{eq:GenMckeanSDE}
  X_t = \int_0^t B_s(X_s,\cL(X_s))\dd s + Y_t \quad \forall\, t\in [0,T]
\end{equation}
where $B : \RR_+\times \RR^d \times \cP_p(\RR^d)\rightarrow \RR^d$ is a measurable map, for $p\in [1,\infty)$, and $Y$ is a continuous process with finite $p$-moment. We detail a variety of assumptions on $B$ below.

\begin{defn}[Weak Solutions I]\label{def:WeakSolsI}
Given a drift $B$ as above and $\mu^Y \in \cP_p(C_T)$, we say that a tuple $(\Omega,\cF,\PP;X,Y)$, consisting of a probability space $(\Omega,\cF,\PP)$ and a measurable map $(X,Y):\Omega \rightarrow C_T\times C_T$, is a weak solution to the DDSDE \eqref{eq:GenMckeanSDE} on $[0,T]$ if:
\begin{enumerate}
  \item $\cL(Y) = \mu^Y$;
  \item $\sup_{t\in [0,T]} \| \cL(X_t)\|_p <\infty$;
  \item $\PP$-a.s. $\int_0^T |B_t(X_t,\cL(X_t))| \dd t <\infty$;
  \item the relation \eqref{eq:GenMckeanSDE} holds $\PP$-a.s., the integral being interpreted in the Lebesgue sense.
\end{enumerate}
We will refer to both $\mu^Y$ and $Y$ as the input data of the DDSDE.
\end{defn}

We readapt here the classical concepts of existence and uniqueness for SDEs to our setting.

\begin{defn}[Uniqueness in Law]\label{def:WeakUnique}
Given $\mu^Y\in \cP(C_T)$, we say that weak uniqueness holds for the DDSDE \eqref{eq:GenMckeanSDE} if for any pair of weak solutions $(\Omega^i,\cF^i,\PP^i;X^i,Y^i)$, $i=1,2$, such that $\cL_{\PP^i}(Y^i)=\mu^Y$, one has $\cL_{\PP^1}(X^1)=\cL_{\PP^2}(X^2)\in \cP(C_T)$.
\end{defn}

\begin{defn}[Pathwise Uniqueness]\label{def:pathWUni}
Given $\mu^Y\in \cP(C_T)$, we say that pathwise uniqueness holds for the DDSDE \eqref{eq:GenMckeanSDE} if for any two solutions $X^1,X^2$ defined on the same probability space $(\Omega,\cF,\PP)$ and w.r.t. to the same input process $Y$, it holds that $X^1=X^2$ $\PP$-a.s.
\end{defn}

\begin{defn}[Strong Existence]\label{def:StrongExist}
Given $\mu^Y\in\cP(C_T)$, we say that strong existence holds for the DDSDE associated to $\mu^Y$ if there exists a solution $(\Omega,\cF,\PP,X,Y)$ such that the process $X$ is adapted to the filtration $\cF^Y_t = \sigma\{Y_s: s \in [0,t]\}$.
\end{defn}

It follows from the definition that if strong existence holds, we can construct a solution $X$ on \textit{any} given probability space $(\Omega,\cF,\PP;Y)$ on which a process with $\cL_\PP(Y)=\mu^Y$ is defined.

We now provide several links between the various concepts of existence and uniqueness for DDSDEs; we start with a result in the style of the Yamada--Watanabe theorem \cite[Prop. 1]{yamada1971uniqueness}, showing that pathwise uniqueness implies uniqueness in law.

\begin{prop}\label{prop:baby-yamada-watanabe}
Let $p\in [1,\infty)$, $\mu^Y\in \cP_p(C_T)$, $B:[0,T]\times \RR^d\times\cP_p(\RR^d)\to \RR^d$ measurable; if pathwise uniqueness holds for the DDSDE, then uniqueness in law holds as well.
\end{prop}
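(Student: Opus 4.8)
The plan is to adapt the classical Yamada--Watanabe construction to the present setting, the only genuinely new ingredient being the self-referential nature of the drift. Let $(\Omega^i,\cF^i,\PP^i;X^i,Y^i)$, $i=1,2$, be two weak solutions with $\cL_{\PP^i}(Y^i)=\mu^Y$, and write $\Theta^i:=\cL_{\PP^i}(X^i,Y^i)\in\cP(C_T\times C_T)$ for their joint laws on path space. Since $C_T$ is Polish, regular conditional probabilities exist, so I would disintegrate each $\Theta^i$ with respect to its second marginal---which is $\mu^Y$ for both---obtaining measurable kernels $Q^i:C_T\to\cP(C_T)$ with $\Theta^i(\dd x,\dd y)=Q^i(y,\dd x)\,\mu^Y(\dd y)$. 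I would then glue the two solutions along the common noise by introducing the measure
\[
\Theta(\dd x^1,\dd x^2,\dd y):=Q^1(y,\dd x^1)\,Q^2(y,\dd x^2)\,\mu^Y(\dd y)
\]
on $(C_T\times C_T\times C_T,\cB(C_T^3))$, and work on this common probability space with $X^1,X^2,Y$ realized as the three coordinate processes.

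The core of the argument is to check that both $(X^1,Y)$ and $(X^2,Y)$ are weak solutions of \eqref{eq:GenMckeanSDE} on the glued space. First I would observe that the third marginal of $\Theta$ is $\mu^Y$, so condition (1) of Definition \ref{def:WeakSolsI} holds, and that, by construction, the marginal of $X^i$ under $\Theta$ coincides with its law under $\Theta^i$, i.e. $\cL_\Theta(X^i)=\cL_{\PP^i}(X^i)$; in particular conditions (2)--(3) are inherited, and the deterministic drift $\bar b^i(s,x):=B_s(x,\cL(X^i_s))$ is \emph{the same} whether computed under $\Theta$ or $\Theta^i$. This is the crucial difference with the classical SDE case: because gluing preserves each marginal law, each $X^i$ continues to ``see'' its own law inside the drift and hence still solves the full DDSDE rather than a perturbed equation. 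Introducing the solution set
\[
A^i:=\Big\{(x,y)\in C_T\times C_T:\ x_t=\int_0^t \bar b^i(s,x_s)\,\dd s+y_t\ \ \forall\,t\in[0,T]\Big\},
\]
which is Borel, the hypothesis that $X^i$ solves \eqref{eq:GenMckeanSDE} gives $\Theta^i(A^i)=1$, whence $Q^i(y,\{x:(x,y)\in A^i\})=1$ for $\mu^Y$-a.e.\ $y$ by Fubini. Integrating this against $Q^{3-i}$ and $\mu^Y$ would then yield $\Theta(\{(x^1,x^2,y):(x^i,y)\in A^i\})=1$, so that relation \eqref{eq:GenMckeanSDE} holds $\Theta$-a.s.\ for each $i$.

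Once this is established, both $X^1$ and $X^2$ are solutions on the same space $(C_T^3,\cB(C_T^3),\Theta)$ driven by the same input process $Y$, so pathwise uniqueness forces $X^1=X^2$ $\Theta$-a.s. This gives $\cL_\Theta(X^1)=\cL_\Theta(X^2)$, and combined with $\cL_\Theta(X^i)=\cL_{\PP^i}(X^i)$ it yields $\cL_{\PP^1}(X^1)=\cL_{\PP^2}(X^2)$, which is exactly uniqueness in law. I expect the main obstacle to be precisely the verification in the second paragraph: one must ensure that the measure appearing inside $B$ is undisturbed by the change of probability space, so that each coordinate process is a genuine DDSDE solution rather than merely a solution of a linear SDE with a frozen coefficient. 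The remaining measure-theoretic points---existence of the kernels $Q^i$ and the Fubini manipulation---are routine once $C_T$ is recognized as Polish.
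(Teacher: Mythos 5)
Your proof is correct and follows essentially the same route as the paper: disintegrating each joint law $\cL(X^i,Y^i)$ along the common marginal $\mu^Y$, gluing via the product of the conditional kernels, observing that the gluing preserves each marginal $\cL(X^i)$ so that the measure argument inside $B$ (and hence the frozen drift $\bar b^i$) is unchanged, and then invoking pathwise uniqueness on the glued space. Your explicit verification via the Borel solution sets $A^i$ and Fubini merely spells out what the paper compresses into the statement that the relations $Y^i=F^i(X^i)$ transfer to $\bar\mu$-a.e.\ triples.
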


\begin{proof}

Let $(\Omega^i,\cF^i,\PP^i;X^i,Y^i)$, $i=1,2$ be two weak solutions to \eqref{eq:GenMckeanSDE} with $\cL(Y^1)=\cL(Y^2)=\mu^Y$.
In order to prove uniqueness in law, it suffices to construct a coupling $(\tilde{X}^1,\tilde{X}^2)$ of $(X^1,X^2)$ such that $(\tilde{X}^1,\tilde{X}^2)$ solves \eqref{eq:GenMckeanSDE} on the same probability space and with same input $\tilde{Y}$; indeed once this is done, pathwise uniqueness implies $\cL_{\PP^1}(X^1)=\cL_{\tilde{\PP}}(\tilde{X}^1) =\cL_{\tilde{\PP}}(\tilde{X}^2) = \cL_{\PP^2}(X^2)$.

To construct the aforementioned coupling, we follow the proof of \cite[Prop. 1]{yamada1971uniqueness} based on disintegration of measures, for which we refer the reader to \cite[Thm. 5.3.1]{ambrosio_gigli_savare_08}.

Set $\mu^i := \cL(X^i,Y^i) \in \cP(C_T\times C_T)$ for $i=1,2$; since $C_T$ is Polish, we can disintegrate $\mu^i$ as $\{\mu^i_y\}_{y \in C_T}\subset \cP(C_T)$, $\mu^i_y$ representing the law of $X^i$ given $Y^i=y$. Define a measure $\bar{\mu} \in \cP(C_T\times C_T\times C_T)$ by setting
\begin{equation*}
  \dd \bar{\mu}(x^1,x^2,y) :=\dd\mu^1_{y}(x^1)\dd \mu^2_{y}(x^2)\dd\mu^Y(y)
\end{equation*}
and consider the probability space $(\bar{\Omega}, \bar{\cF}, \bar\mu)$ given by $\Omega=C_T\times C_T\times C_T$, $\bar{\cF}$ the $\bar{\mu}$-completion of $\cB(C_T\times C_T\times C_T)$. We also define the projection maps $\pi_{x^1,x^2}(x^1,x^2,y)=(x^1,x^2)$, and similarly $\pi_{x^i,y}$ for $i=1,2$.

We claim that $(\bar{\Omega}, \bar{\cF}, \bar{\mu}; \pi_{x^1,x^2})$ is the desired coupling. By construction,
\[
\cL_{\bar{\PP}}(\pi_{x^i,y}) = \dd \mu^i_y (x^i) \dd \mu^Y(y) = \cL_{\PP^i}(X^i,Y^i) \quad \text{for } i=1,2.
\]
Moreover the DDSDE relation \eqref{eq:GenMckeanSDE} can be written as $Y^i = F^i(X^i)$ for some measurable maps $F^i:C_T\to C_T$ given by $F^i(X^i)_\cdot := X^i - \int_0^\cdot B_s(X^i_s,\cL_{\PP^i}(X^i_s)) \dd s$; the same relations must then hold for $\bar\mu$-a.e. $(x^1,x^2,y)$, namely
\begin{align*}
  x^i_t
  = \int_0^t B_s(x^i_s, \cL_{\PP^i}(X^i_t))\dd s + y_t
  = \int_0^t B_s(x^i_s, \cL_{\bar{\mu}}(\pi_{x^i_t}))\dd s + y_t \quad \text{for } i=1,2
\end{align*}
where the property $\cL_{\bar{\mu}}(\pi_{x^i_t})= \cL_{\PP^i}(X^i_t)$ comes from the identity $\cL_{\bar{\mu}}(\pi_{x^i,y}) = \cL_{\PP^i}(X^i,Y^i)$. Overall this shows that $\pi_{x^1,x^2}$ is a coupling of $X^1$, $X^2$ solving the DDSDE \eqref{eq:GenMckeanSDE} with same input $\pi_y$, $\cL_{\bar{\mu}}(\pi_y)=\mu^Y$.
\end{proof}

\begin{rem}\label{rem:baby-yamada-watanabe}

The proof above also contains the nontrivial information that, given any two weak solutions $(\Omega^i,\cF^i,\PP^i;X^i,Y^i)$, we may construct a coupling of them. This is also true in the case $\mu^{Y^1}=\cL_{\PP^1}(Y^1)\neq\cL_{\PP^2}(Y^2)=\mu^{Y^2}$, assuming we already have a coupling $m\in \cP(C_T\times C_T)$ of $(\mu^{Y^1},\mu^{Y^2})$ available. Following the notation of the proof, in this case one can take
\[
\dd \bar{\mu}(x^1,x^2,y^1,y^2) := \dd \mu^1_{y^1}(x^1) \dd \mu^2_{y^2}(x^2) m(\dd y^1, \dd y^2)
\]
which defines $\bar{\mu}\in \cP(C_T\times C_T\times C_T\times C_T)$; $(\pi_{x^1,y^1},\pi_{x^2,y^2})$ is then a coupling of $(X^1,Y^1), (X^2,Y^2)$.
\end{rem}

The next proposition is in the style of \cite[Theorem 6.3]{wang2018distribution}.

\begin{prop}\label{prop:equivalence-weak-strong}
Let $p\in [1,\infty)$, $B:[0,T]\times \RR^d\times\cP_p(\RR^d)\to \RR^d$ measurable.
Suppose that for any $\mu^Y\in \cP_p(C_T)$ and any measurable $\mu:[0,T]\to\cP_p(\RR^d)$, strong existence and both pathwise uniqueness and uniqueness in law hold for the SDE
\begin{equation}\label{eq:reference-SDE}
  X_t = \int_0^t B_s(X_s,\mu_s)\dd s + Y_t.
\end{equation}
Then weak existence (resp. uniqueness in law) holds for the DDSDE \eqref{eq:GenMckeanSDE} if and only if strong existence (resp. pathwise uniqueness) holds for it. 
\end{prop}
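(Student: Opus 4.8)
The strategy is to exploit the elementary observation underlying the classical Yamada--Watanabe theory: once the marginal flow $\mu_t := \cL(X_t)$ of a weak solution $X$ of \eqref{eq:GenMckeanSDE} is \emph{frozen}, $X$ becomes a solution of the decoupled reference equation \eqref{eq:reference-SDE} for that particular $\mu$, while conversely any solution of \eqref{eq:reference-SDE} whose one-time marginals coincide with the frozen $\mu$ automatically solves the DDSDE. This transfers the assumed good behaviour of \eqref{eq:reference-SDE} to \eqref{eq:GenMckeanSDE}. Two of the four implications are immediate: a strong solution of the DDSDE is in particular a weak solution, so strong existence implies weak existence, and ``pathwise uniqueness $\Rightarrow$ uniqueness in law'' is precisely Proposition \ref{prop:baby-yamada-watanabe}. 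It therefore remains to prove the two converse implications, the input datum $\mu^Y \in \cP_p(C_T)$ being fixed throughout.

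For \emph{weak existence $\Rightarrow$ strong existence}, I would take a weak solution $(\Omega, \cF, \PP; X, Y)$ and set $\mu_t := \cL(X_t)$. Path continuity of $X$ together with condition (2) of Definition \ref{def:WeakSolsI} ensures that $t \mapsto \mu_t$ is weakly continuous with $\sup_t \|\mu_t\|_p < \infty$, hence an admissible measurable map $[0,T] \to \cP_p(\RR^d)$, and by construction $X$ solves \eqref{eq:reference-SDE} with this $\mu$. Strong existence for \eqref{eq:reference-SDE} then furnishes, on a space carrying a process $Y$ with $\cL(Y) = \mu^Y$, a solution $\hat X$ adapted to $\cF^Y$. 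The decisive point is that $\hat X$ in fact solves the full DDSDE: by uniqueness in law for \eqref{eq:reference-SDE} one has $\cL(\hat X) = \cL(X)$, whence $\cL(\hat X_t) = \mu_t$ for every $t$, so that $B_s(\hat X_s, \mu_s) = B_s(\hat X_s, \cL(\hat X_s))$ and $\hat X$ is a strong solution of \eqref{eq:GenMckeanSDE}.

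For \emph{uniqueness in law $\Rightarrow$ pathwise uniqueness}, I would take two solutions $X^1, X^2$ of the DDSDE defined on the same probability space with the same input $Y$. Uniqueness in law for the DDSDE gives $\cL(X^1) = \cL(X^2)$, hence $\cL(X^1_t) = \cL(X^2_t) =: \mu_t$ for all $t$; consequently both $X^1$ and $X^2$ solve the reference equation \eqref{eq:reference-SDE} with the \emph{same} frozen $\mu$ and the same input $Y$ on the same space, and pathwise uniqueness for \eqref{eq:reference-SDE} forces $X^1 = X^2$ almost surely.

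The main obstacle is conceptual rather than computational: it lies in the consistency step of the existence direction, namely verifying that the solution $\hat X$ built from the frozen marginal genuinely reproduces that same marginal, so that it solves the nonlinear DDSDE and not merely the decoupled equation --- this is exactly where uniqueness in law of \eqref{eq:reference-SDE} is indispensable, and it is the one place the hypotheses are used nontrivially in the existence part. A secondary, purely technical point is to confirm that $t \mapsto \cL(X_t)$ is a legitimate $\cP_p$-valued measurable input for \eqref{eq:reference-SDE}, which follows from path continuity together with the uniform $p$-moment control in Definition \ref{def:WeakSolsI}(2).
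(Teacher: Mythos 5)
Your proposal is correct and follows essentially the same route as the paper's proof: freezing the marginals $\mu_t=\cL(X_t)$, using strong existence plus uniqueness in law of the reference SDE \eqref{eq:reference-SDE} to upgrade a weak DDSDE solution to a strong one, and using weak uniqueness of the DDSDE to identify the marginals of two solutions so that pathwise uniqueness of the frozen SDE applies. Your extra remarks on the measurability of $t\mapsto\cL(X_t)$ and on where uniqueness in law of the SDE enters are sound refinements of the same argument, not a different approach.
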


\begin{proof}

Strong existence always implies weak existence and by Proposition \ref{prop:baby-yamada-watanabe}, pathwise uniqueness implies weak uniqueness.

Suppose now that there exists a weak solution $(\Omega,\cF,\PP;X,Y)$ to the DDSDE, then $X$ is also a solution to the SDE \eqref{eq:reference-SDE} for the choice $\mu_t=\cL(X_t)$.
By hypothesis, we can construct a strong solution $\bar{X}$ to the SDE on the canonical space $(C_T, \cB(C_T),\mu^Y)$;
since weak uniqueness holds for the SDE, $\cL_{\PP}(X)=\cL_{\mu^Y} (\bar{X})$, which implies that $\bar{X}$ is a strong solution to the DDSDE.

Finally, assume that weak uniqueness holds and let $X^1$, $X^2$ be two solutions defined on the same probability space and with same input $Y$.
Then $\cL(X^1_t)=\cL(X^2_t)=:\mu_t$ and they solve the same SDE with drift $(t,x)\mapsto B_t(x,\mu_t)$, so by pathwise uniqueness for the SDE it holds $X^1=X^2$ $\PP$-a.s.
\end{proof}
\subsection{Existence}\label{subsec:ContDrift}
Here we study existence of solutions to \eqref{eq:GenMckeanSDE} for continuous drifts $B$ satisfying suitable linear growth conditions.
\begin{ass}\label{ass:GenLinearGrowth}
Given $p\in [1,\infty)$, $B:[0,T]\times \RR^d \times \cP_p(\RR^d)\rightarrow \RR^d$ is measurable and s.t.:
\begin{itemize}
  \item[i.] For any $t\in [0,T]$, $B_t:\RR^d \times \cP_p(\RR^d)\rightarrow \RR^d$ is uniformly continuous on bounded sets.
  \item[ii.] There exists $h\in L^1_T$ such that for all $(t,x,\mu) \in [0,T]\times \RR^d\times \cP_p(\RR^d)$ one has
\begin{equation}\label{eq:TimeInhomLinearGrowth}
 |B_t(x,\mu)| \leq h_t\left(1+|x|+ \|\mu\|_{p} \right).
\end{equation}
\end{itemize}
%
\end{ass}
\begin{rem}\label{rem:equiv w def}
Let $\delta_0$ denote the Dirac delta in $0$, which belongs to $\cP_p(\RR^d)$ for any $p\in [1,\infty)$; it's then easy to check that $\| \mu\|_p = d_p(\mu,\delta_0)$ for any $\mu\in \cP_p$. Therefore if $B$ satisfies Assumption \ref{ass:GenLinearGrowth}, it also satisfies Assumption \ref{ass:TimeInHomLG} from Appendix \ref{app:approximation} in the metric space $E=\RR^d\times \cP_p(\RR^d)$.
\end{rem}

The next result can be seen as an extension of Peano's theorem to DDSDEs.

\begin{prop}\label{prop:GenExistenceUnderGrowth}
Let $B$ satisfy Assumption \ref{ass:GenLinearGrowth} for some $p\in[1,\infty)$ and $\mu^Y\in \cP_p(C_T)$; then there exists a weak solution to \eqref{eq:GenMckeanSDE} in the sense of Definition \ref{def:WeakSolsI}. 
Moreover, any weak solution $(\tilde{\Omega},\tilde{\cF},\tilde{\PP}; \tilde{X},\tilde{Y})$ satisfies the pathwise bound
\begin{equation}\label{eq:apriori-bound-exist}
  \| \tilde{X}\|_{C_T} \leq C (1+\| \tilde{Y}\|_{C_T}) \quad \tilde{\PP}\text{-a.s.}
\end{equation}
for some constant $C=C(\| h\|_{L^1_T})$.
\end{prop}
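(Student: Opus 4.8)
The plan is to construct a weak solution by a compactness (Prokhorov--Skorokhod) argument, approximating $B$ by globally Lipschitz drifts and exploiting the uniform a priori estimates of Lemma \ref{lem:MKV basic apriori estimates}. Fix a probability space carrying a process $Y$ with $\cL(Y)=\mu^Y$, say the canonical space $(C_T,\cB(C_T),\mu^Y)$. Using the approximation scheme of Appendix \ref{app:approximation} (applicable by Remark \ref{rem:equiv w def}), choose drifts $B^n$ that are Lipschitz in $(x,\mu)$ uniformly on bounded sets, converge to $B$ uniformly on bounded sets, and obey the same linear growth \eqref{eq:TimeInhomLinearGrowth} with a common $h\in L^1_T$. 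By Proposition \ref{prop:MKV basic result} each DDSDE with drift $B^n$ and the \emph{same} input $Y$ has a unique solution $X^n\in L^p_\Omega C_T$, and by Lemma \ref{lem:MKV basic apriori estimates}---whose constants depend only on $\|h\|_{L^1_T}$ and are therefore uniform in $n$---one has $\sup_n \EE[\|X^n\|_{C_T}^p]<\infty$ together with the decomposition $X^n=Z^n+Y$, where $Z^n_0=0$ and $\sup_n\EE[\llbracket Z^n\rrbracket_h^p]<\infty$.

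The next step is tightness. Since $f_h(\delta)\to 0$ as $\delta\to 0$, the sets $\{z\in C_T:z_0=0,\ \llbracket z\rrbracket_h\le R\}$ are relatively compact in $C_T$ by Arzel\`a--Ascoli, and Markov's inequality applied to the uniform bound on $\EE[\llbracket Z^n\rrbracket_h^p]$ shows that $\{\cL(Z^n)\}_n$ is tight. Because the $Y$-marginal of $\cL(X^n,Y)=\cL(Z^n+Y,Y)$ is the fixed measure $\mu^Y$, the joint laws $\{\cL(X^n,Y)\}_n$ are tight on $C_T\times C_T$. By Prokhorov's theorem we extract a subsequence with $\cL(X^{n_k},Y)\rightharpoonup \nu$, and by the Skorokhod representation theorem we realize, on a common probability space $(\tilde\Omega,\tilde\cF,\tilde\PP)$, variables $(\tilde X^{n_k},\tilde Y^{n_k})$ with law $\cL(X^{n_k},Y)$ converging $\tilde\PP$-a.s. in $C_T\times C_T$ to a pair $(\tilde X,\tilde Y)$ of law $\nu$.

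The heart of the argument---and the step I expect to be the \emph{main obstacle}---is passing to the limit in the nonlinear drift, which requires upgrading the weak convergence of the marginals to $d_p$-convergence. Here the fixed $Y$-marginal is crucial: it forces $\EE[\|\tilde Y^{n_k}\|_{C_T}^p]=\|\mu^Y\|_p^p=\EE[\|\tilde Y\|_{C_T}^p]$, so that a.s. convergence plus convergence of $p$-norms gives $\tilde Y^{n_k}\to\tilde Y$ in $L^p_\Omega C_T$, hence uniform integrability of $\{\|\tilde Y^{n_k}\|_{C_T}^p\}$. Through the pathwise bound $\|\tilde X^{n_k}\|_{C_T}\le C(1+\|\tilde Y^{n_k}\|_{C_T})$ with $C=C(\|h\|_{L^1_T})$ (which holds in law by \eqref{eq:MKV basic apriori2} and so transfers to the Skorokhod copies), the family $\{\|\tilde X^{n_k}\|_{C_T}^p\}$ is uniformly integrable, whence $\tilde X^{n_k}\to\tilde X$ in $L^p_\Omega C_T$ and in particular $\sup_s d_p(\cL(\tilde X^{n_k}_s),\cL(\tilde X_s))\to 0$. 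Splitting $B^{n_k}_s(\tilde X^{n_k}_s,\cL(\tilde X^{n_k}_s))-B_s(\tilde X_s,\cL(\tilde X_s))$ into an approximation error (controlled by the uniform convergence $B^{n_k}\to B$ on the bounded set where the arguments lie) and a continuity error (controlled by uniform continuity of $B_s$ on bounded sets together with $\tilde X^{n_k}_s\to\tilde X_s$ and the $d_p$-convergence of the laws), the integrand converges for a.e. $s$, $\tilde\PP$-a.s., while \eqref{eq:TimeInhomLinearGrowth} provides the domination $h_s\,C(\tilde\omega)\in L^1_T$; dominated convergence then passes the limit through $\int_0^t(\cdots)\,\dd s$. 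Together with $\tilde X^{n_k}_t\to\tilde X_t$ and $\tilde Y^{n_k}_t\to\tilde Y_t$, this shows that $(\tilde\Omega,\tilde\cF,\tilde\PP;\tilde X,\tilde Y)$ satisfies \eqref{eq:GenMckeanSDE}; the remaining conditions of Definition \ref{def:WeakSolsI} follow from $\cL(\tilde Y)=\mu^Y$ and the uniform moment bounds.

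It remains to prove the pathwise bound \eqref{eq:apriori-bound-exist} for an arbitrary weak solution $(\tilde\Omega,\tilde\cF,\tilde\PP;\tilde X,\tilde Y)$; this is obtained exactly as in Lemma \ref{lem:MKV basic apriori estimates}, using only the growth condition \eqref{eq:TimeInhomLinearGrowth} and not any Lipschitz structure. From the equation one has $\sup_{r\le t}|\tilde X_r|\le \int_0^t h_s(1+\sup_{r\le s}|\tilde X_r|+\|\cL(\tilde X_s)\|_p)\,\dd s+\|\tilde Y\|_{C_T}$; taking $L^p_\Omega$-norms, applying Minkowski's integral inequality and Gr\"onwall's lemma first bounds $\sup_t\|\cL(\tilde X_t)\|_p$ in terms of $\|h\|_{L^1_T}$ and $\EE[\|\tilde Y\|_{C_T}^p]^{1/p}$ (finite by condition (2) of Definition \ref{def:WeakSolsI} and $\mu^Y\in\cP_p(C_T)$), after which a pathwise application of Gr\"onwall's lemma yields \eqref{eq:apriori-bound-exist} with a constant depending only on $\|h\|_{L^1_T}$.
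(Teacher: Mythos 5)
Your proposal is correct and follows essentially the same route as the paper's proof: Lipschitz approximation of $B$ via Proposition \ref{prop:TimeInhomLipApprox}, solutions $X^n$ from Proposition \ref{prop:MKV basic result}, tightness of $\cL(Z^n)$ from the uniform bound on $\EE[\llbracket Z^n\rrbracket_h^p]$ and Arzel\`a--Ascoli, Prokhorov--Skorokhod, upgrading to $L^p_\Omega C_T$-convergence (hence $d_p$-convergence of marginals) via the fixed $Y$-marginal and uniform integrability, and then passing to the limit in the drift. The only cosmetic deviation is that you close the limit passage by pathwise dominated convergence with dominating function $h_s\,C(\tilde\omega)$, where the paper invokes Vitali's convergence theorem with uniform integrability of the integrands; both are valid, and your final Gr\"onwall argument for \eqref{eq:apriori-bound-exist} matches the paper's reference back to \eqref{eq:MKV basic apriori2}.
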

\begin{proof}
Estimate \eqref{eq:apriori-bound-exist} follows in exactly the same manner as \eqref{eq:MKV basic apriori2}, since it only relies on the linear growth of $B$ and Gr\"onwall's lemma; thus we focus on establishing existence.

Thanks to Assumption \ref{ass:GenLinearGrowth} and Remark \ref{rem:equiv w def}, we can apply Proposition \ref{prop:TimeInhomLipApprox} from Appendix \ref{app:approximation} to $B$; we then find a sequence of drifts $\{B^n\}_{n}$ such that each $B^n:[0,T]\times \RR^d\times \cP_p(\RR^d)\rightarrow \RR^d$ satisfies condition \eqref{eq:ass basic1} with the same growth function $h_t$ and $B^n$ is Lipschitz with constant $g^n>0$ independent of $t$.
Denote by $Y$ the canonical process associated to $\mu^Y$; since $\mu^Y \in \cP_p(C_T)$, we may apply Proposition \ref{prop:MKV basic result} to obtain the existence of a sequence $\{X^n\}_{n}$, each $X^n$ solving \eqref{eq:GenMckeanSDE} with $B$ replaced by $B^n$.

Lemma \ref{lem:MKV basic apriori estimates} furnishes us with suitable a priori estimates for the family $\{X^n\}_{n}$: there exists $C=C(\|h\|_{L^1_T})$ such that
\begin{equation}\label{eq:PeanoAPriori}
  \EE\big[\|X^n\|_{C_T}^p \big]^{1/p} + \EE\big[\,\llbracket Z^n\rrbracket_h^p\big]^{1/p} \leq C\Big (1+ \EE[ \| Y\|_{C_T}^p]^{1/p} \Big)
\end{equation}
where the estimate is uniform over $n$ and $Z^n$ are given by the relation $X^n = Z^n + Y$; we recall that $\llbracket \,\cdot\,\rrbracket_h$ was the seminorm associated to the modulus of continuity $f_h$ defined in Section \ref{sec:LipWellPosed}.

By Arzel\`a--Ascoli, subsets of $C_T$ of the form $\{\omega\in C_T : \omega_0=0, \llbracket \omega\rrbracket_h \leq R\}$ are compact; therefore by the moment estimate \eqref{eq:PeanoAPriori}, the family of measures $\{\cL(Z^n)\}_n$ are tight in $C_T$.
Since $\cL(Y)$ is also tight on $C_T$ (being a probability measure on a Polish space), we deduce that the family $\{\cL(Y,X^n)\}_n$ of probability measures on $C_T\times C_T$ is tight as well.

We can then apply Prokhorov's theorem to extract a (not relabelled) subsequence such that $\{\cL(X^n,Y)\}_n$ converge weakly to a limit measure $\mu$. By Skorokhod's theorem we can construct a new probability space $(\tilde{\Omega},\tilde{\cF},\tilde{\PP})$, carrying a family of random variables $(\tilde{X}^n,\tilde{Y}^n)$, $(\tilde{X},\tilde{Y})$, such that $(\tilde{X}^n(\tilde \omega),\tilde{Y}^n(\tilde \omega))\to (\tilde{X}(\tilde \omega),\tilde{Y}(\tilde \omega))$ for $\tilde{\PP}$-a.e. $\tilde\omega$, $\mathcal{L}_\PP(X^n,Y)=\mathcal{L}_{\tilde{\PP}}(\tilde{X}^n,\tilde{Y}^n)$ and $\mu = \cL_{\tilde{\PP}} (\tilde{X},\tilde{Y})$. It remains to show that $(\tilde{\Omega}, \tilde\cF,\tilde\PP; \tilde{X},\tilde{Y})$ is a weak solution to \eqref{eq:GenMckeanSDE}.

First of all, observe that $\cL_{\tilde{\PP}}(\tilde{Y}^n) = \cL_\PP(Y)=\mu^Y$ and $\cL_{\tilde{\PP}}(\tilde{Y}^n)\rightharpoonup \cL_{\tilde{\PP}}(\tilde{Y})$ so that $\cL_{\tilde{\PP}}(\tilde{Y})=\mu^Y$, as well as $\EE_{\tilde{\PP}} [\| \tilde{Y}^n\|_{C_T}^p] \to \EE_{\tilde{\PP}} [\| \tilde{Y}\|_{C_T}^p]$; together with the convergence $\tilde{Y}^n \to \tilde{Y}$ $\tilde{\PP}$-a.s., we find that $\tilde{Y}^n \to \tilde{Y}$ in $L^p_{\tilde{\Omega}} C_T$.
Since $(\tilde{Y}^n,\tilde{X}^n)$ have the same law as $(Y,X^n)$, they are still solutions to the DDSDEs associated to $B^n$; in particular they satisfy the pathwise bound \eqref{eq:MKV basic apriori2}. This information together with the convergence $\tilde{Y}^n \to \tilde{Y}$ in $L^p_{\tilde{\Omega}} C_T$ implies that $\{\tilde{X}^n\}_n$ is uniformly $p$-integrable. However, since we also have $\tilde{X}^n\to \tilde{X}$ $\tilde{\PP}$-a.s., by Vitali's convergence theorem $\tilde{X}^n\to \tilde{X}$ in $L^p_{\tilde{\Omega}} C_T$. In particular $d_p(\cL_{\tilde{\PP}}(\tilde{X}^n_t), \cL_{\tilde{\PP}}(\tilde{X}_t))\to 0$ and overall we have the $\tilde{\PP}$-a.s. convergence $(\tilde{X}_t^n,\cL(\tilde{X}_t^n))\rightarrow(\tilde{X}_t,\cL(\tilde{X}_t))$ in $\RR^d\times \cP_p(\RR^d)$ for any $t\in [0,T]$.

The above convergence also implies that $\tilde{X}$ satisfies the pathwise bound \eqref{eq:apriori-bound-exist}; together with assumption \eqref{eq:TimeInhomLinearGrowth} this shows that points $(2)$ and $(3)$ of Definition \ref{def:WeakSolsI} are satisfied.
To conclude that identity \eqref{eq:GenMckeanSDE} holds $\tilde{\PP}$-a.s. it then suffices to show that $\int_0^\cdot B^n_s(\tilde{X}^n_s, \cL(\tilde{X}^n_s)) \dd s$ converge to $\int_0^\cdot B_s(\tilde{X}_s, \cL(\tilde{X}_s)) \dd s$.

Since $B^n$ converge to $B$ uniformly on compact sets and $(\tilde{X}_t^n,\cL(\tilde{X}_t^n))\rightarrow(\tilde{X}_t,\cL(\tilde{X}_t))$ $\tilde{\PP}$-a.s., it holds $B^n(\tilde{X}_t^n,\cL(\tilde{X}_t^n))\to B(\tilde{X}_t,\cL(\tilde{X}_t))$ $\tilde{\PP}$-a.s. as well.
Moreover since $B^n$ and $B$ both satisfy the growth assumption \eqref{eq:TimeInhomLinearGrowth}, we have
\begin{align*}
  \sup_{t\in [0,T]} \bigg|\int_0^t B^n_s(\tilde{X}^n_s, \cL(\tilde{X}^n_s)) \dd s- \int_0^t B_s(\tilde{X}_s, \cL(\tilde{X}_s) \dd s \bigg|
  \leq \int_0^T |B^n_s(\tilde{X}^n_s, \cL(\tilde{X}^n_s)) - B_s(\tilde{X}_s, \cL(\tilde{X}_s))| \dd s
\end{align*}
with the second integrand satisfying the uniform bound
\begin{align*}
  |B^n_s(\tilde{X}^n_s, \cL(\tilde{X}^n_s)) - B_s(\tilde{X}_s, \cL(\tilde{X}_s)|
  & \leq \| h\|_{L^1_T} (2+ \| \tilde{X}^n\|_{C_T} + \| \cL(\tilde{X}^n)\|_p + \|\tilde{X}\|_{C_T} + \| \cL(\tilde{X})\|_p)\\
  & \lesssim_h 1 + \| \tilde{Y}^n\|_{C_T} + \| \tilde{Y}\|_{C_T} + \| \cL(\tilde{Y})\|_p
\end{align*}
where we used again the a priori bound \eqref{eq:apriori-bound-exist}, as well as $\cL(\tilde{Y}^n)=\cL(\tilde{Y})$. But $\|\tilde{Y}^n\|_{C_T}\to \|\tilde{Y}\|_{C_T}$ $\tilde{\PP}$-a.s., thus the previous integrands must be uniformly integrable.

Since $[0,T]$ is finite, we may now apply again Vitali's convergence theorem to give that
\begin{align*}
  \int_0^\cdot B^n(s,\tilde{X}^n_s,\cL(\tilde{X}^n_s))\,\dd s \to \int_0^\cdot B(s,\tilde{X}_s,\cL(\tilde{X}_s))\,\dd s\quad \tilde{\PP}\text{-a.s.}
\end{align*}
which implies the conclusion.
\end{proof}

\begin{rem}
Condition \textit{i.} from Assumption \ref{ass:GenLinearGrowth} can be verified in a variety of situations;
it suffices to check that for any $t\in [0,T]$, the map $(x,\mu)\mapsto B_t(x,\mu)$ is continuous under a weak topology of $\RR^d\times\cP_p(\RR^d)$ for which bounded balls are compact (as we can then apply Heine--Cantor theorem).

For instance, if $B_t(\cdot,\cdot)$ is continuous on $\RR^d\times \cP_p(\RR^d)$, then it's uniformly continuous on bounded balls on $\RR^d\times \cP_q(\RR^d)$ for any $q>p$, due to the compact embedding $\cP_q\hookrightarrow \cP_p$. Similarly, in the case $p=1$ it's enough to require
\begin{equation*}
  |B_t(x,\mu)-B_t(y,\nu)|\lesssim_t F\Big(|x-y| + \inf_{m\in\Pi(\mu,\nu)} \int_{\RR^{2d}} |x'-y'|^\theta\, m(\dd x',\dd y')\Big)
\end{equation*}
for some $\theta\in (0,1)$ and some continuous $F$ such that $F(0)=0$. On the use of more abstract weak topologies guaranteeing sequential compactness of bounded balls of $\cP(\RR^d)$ we also refer to \cite[Section B]{Gartner1988}.
\end{rem}

\begin{rem}\label{rem:growth-vs-motononicity}
The reader might wonder if we can replace the growth condition in \eqref{eq:TimeInhomLinearGrowth} by a monotonicity assumption like $\langle B_t(x,\mu),x \rangle \lesssim 1+ |x|^2$; there are several difficulties in doing so.
On a technical side, one would need to control $\int_0^t \langle X_s, \dot Y_s\rangle$, which require either $Y$ to be of bounded variation or some more refined integration theory (It\^o, rough paths) to be available; recall that here we are not imposing any assumption on $Y$ other than continuity.

More importantly, already in the case of standard ODEs there are examples of finite time blow-up for suitable choices of $b$ and continuous $Y$, see \cite[Section 3.3]{cox2013local}; the monotonicity assumption can be replaced by more refined criteria, see \cite{riedel2017rough} and the recent work \cite{bonnefoi2020priori}, which however don't seem to transfer easily to the distribution-dependent setting.
\end{rem}

\subsection{Uniqueness}\label{subsec:uniqueness}

In this section we provide various conditions for uniqueness of solutions to the DDSDE \eqref{eq:GenMckeanSDE}, thus extending already known results for ODEs and SDEs to the the distribution dependent case. We do not aim at maximal generality, as the criteria from Sections \ref{subsec:osgood}-\ref{subsec:loc-lipschitz} could be combined together to create new ones; rather we aim to illustrate how many classical assumptions on $B$ still work perfectly without any need for $Y$ to be Brownian or Markovian.

Whenever possible, we try to keep uniqueness statements separate from their existence counterparts (which in our setting always comes from Proposition \ref{prop:GenExistenceUnderGrowth}) and only afterwards unite them together in a well-posedness result.
Depending on the DDSDE in consideration, ad-hoc existence results which go beyond Assumption
\ref{ass:GenLinearGrowth} might be available; they can still be combined with the uniqueness criteria presented here, which are thus of independent interest.

\subsubsection{Osgood-type condition}\label{subsec:osgood}
It is well known that classical ODEs admit a unique solution under Osgood type conditions on the drift (originally proposed in \cite{Osgood1898}); we include here an analogous statement for DDSDEs. 

\begin{ass}\label{ass:Osgood}
It holds that
\begin{equation}\label{eq:OsgoodDrift}
  |B_t(x,\mu)-B_t(y,\nu)|\leq h_t\,f(|x-y|+d_1(\mu,\nu)),
\end{equation}
for all $(x,y)\in \RR^{2d}$, $(\mu,\nu)\in \cP_1(\RR^d)^2$ and $t\in [0,T]$, where $h\in L^1_T$ and $f:\RR_+ \rightarrow \RR_+$ is a modulus of continuity satisfying 
\begin{equation}\label{eq:OsgoodIntegralCondition}
  \int_0^\eps \frac{1}{f(u)}du=\infty\quad {\rm for\,\, any} \,\,\varepsilon>0.
\end{equation}
\end{ass}
\begin{rem}
By a result of Stechkin which can be found as \cite[Lem. A]{medvedev2001concave}, given any modulus of continuity $f$, there exists another concave modulus $\tilde{f}$ satisfying the two-sided bound $\tilde{f}\leq f\leq 2\tilde{f}$. Thus we can always assume the modulus $f$ appearing in \eqref{eq:OsgoodIntegralCondition} to be concave, which also implies it is increasing and subadditive.
\end{rem}

Before proving our uniqueness results, we 
recall Bihari's inequality, see e.g. \cite{Beesack1976}.

\begin{prop}\label{prop:Bihari}
Let $f:[a,b]\to (0,+\infty)$ be a continuous, monotone, strictly positive function and let $x$ and $h$ be two functions on an interval $[t_0,t_0+T]$ such that $x([t_0,t_0+T])\subset [a,b]$, $h\geq 0$.
Assume that for some $\alpha\in [a,b]$ one has
\begin{equation*}
  x_t\leq \alpha+\int_{t_0}^t h_s\, f(x_s)\dd s,\quad t\in [t_0,t_0+T];
\end{equation*}
then
\begin{equation}\label{eq:bihari}
  x_t\leq G^{-1}\left(G(a)+\int_{t_0}^t h_s\dd s\right),\quad t\in [t_0,t_0+\tilde{T}],
\end{equation}
where
\begin{equation}\label{eq:bihari-G}
  G(u):=\int_{x_0}^u \frac{1}{f(r)}\dd r
\end{equation}
and $\tilde{T}\leq T$ is chosen such that
\begin{equation*}
  G(a)+\int_{t_0}^t h(s)\dd s\in {\rm Dom}(G^{-1}),\quad \forall t\in [t_0,t_0+\tilde{T}]. 
\end{equation*}
\end{prop}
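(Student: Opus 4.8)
The plan is to bound $x$ by the explicit majorant furnished by the right-hand side of the hypothesis and then linearise the resulting integral inequality through the change of variables encoded by $G$. Concretely, I would set
\[
y_t := \alpha + \int_{t_0}^t h_s\, f(x_s)\,\dd s,
\]
so that $x_t\le y_t$ for every $t\in[t_0,t_0+T]$ by assumption and $y_{t_0}=\alpha$. Since $x$ takes values in the compact interval $[a,b]$ and $f$ is continuous there, $f\circ x$ is bounded; together with $h\in L^1$ this shows that $s\mapsto h_s f(x_s)$ is integrable, hence $y$ is absolutely continuous with $\dot y_t = h_t f(x_t)$ for a.e.\ $t$.

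First I would exploit the monotonicity of $f$. Taking $f$ increasing (the relevant case here; recall the Stechkin reduction in the preceding remark produces a concave, hence increasing, modulus), the pointwise bound $x_t\le y_t$ yields $f(x_t)\le f(y_t)$, whence
\[
\dot y_t = h_t\, f(x_t) \le h_t\, f(y_t)\qquad\text{for a.e. } t.
\]
Because $f$ is strictly positive and continuous on $[a,b]$ it is bounded below by a positive constant, so $G'=1/f$ is bounded and continuous and $G$ is a $C^1$, strictly increasing function on $[a,b]$. Consequently $G\circ y$ is absolutely continuous and the chain rule gives $\frac{\dd}{\dd t} G(y_t) = \dot y_t / f(y_t) \le h_t$ for a.e.\ $t$. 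Integrating over $[t_0,t]$ produces
\[
G(y_t) \le G(\alpha) + \int_{t_0}^t h_s\,\dd s.
\]

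Finally I would invert $G$. Since $G$ is strictly increasing, so is $G^{-1}$, and applying it to the previous display gives $y_t \le G^{-1}\big(G(\alpha)+\int_{t_0}^t h_s\,\dd s\big)$; combined with $x_t\le y_t$ this is exactly \eqref{eq:bihari}. The restriction to $t\in[t_0,t_0+\tilde T]$ enters precisely at this step: it is the requirement that $G(\alpha)+\int_{t_0}^t h_s\,\dd s$ remain in the range of $G$, i.e.\ in the domain of $G^{-1}$, so that the inversion is legitimate (beyond $\tilde T$ the majorant may escape $[a,b]$ and the bound loses meaning).

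The proof is essentially routine, and the only genuinely delicate points are two. The first is that $h$ is merely $L^1$, so the majorant $y$ is only absolutely continuous and the differentiation of $G(y_t)$ must be justified through the chain rule for the composition of a $C^1$ map with an absolutely continuous one, rather than by classical calculus; this is what forces the preliminary regularity observations above. The second is the direction of monotonicity: the comparison $f(x_t)\le f(y_t)$ requires $f$ to be non-decreasing, which is the case of interest for the Osgood uniqueness application, whereas a decreasing $f$ would reverse this inequality and demand a separate comparison argument.
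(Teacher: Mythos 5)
The paper gives no proof of this proposition at all: it is stated as a recalled classical result with a reference to \cite{Beesack1976}. So the only meaningful benchmark is the standard argument, which is exactly what you follow (majorize $x$ by $y_t=\alpha+\int_{t_0}^t h_s f(x_s)\,\dd s$, differentiate $G\circ y$, integrate, invert), and your two deliberate deviations from the literal statement are in fact corrections of it. First, you prove the bound with $G(\alpha)$ where the statement writes $G(a)$; the literal statement is false (take $f\equiv 1$ and $x_t=\alpha+\int_{t_0}^t h_s\,\dd s$ with $\alpha>a$: the hypothesis holds with equality, while the claimed bound $x_t\le a+\int_{t_0}^t h_s\,\dd s$ fails), and the $G(\alpha)$ version is the one actually invoked in the proof of Proposition~\ref{prop:strong stability}. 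Second, your restriction to non-decreasing $f$ is a necessity, not a convenience: for decreasing $f$ the conclusion is genuinely false --- e.g.\ $f(u)=1/u$ on $[1/2,3]$, $h\equiv 1$, $\alpha=1$, $t_0=0$, and $x$ continuous, equal to $1$ on $[0,9/10]$ and rising linearly to $19/10$ at $t=1$, satisfies the hypothesis but violates $x_1\le G^{-1}\left(G(1)+1\right)=\sqrt{3}$ --- so no ``separate comparison argument'' could have rescued that case.

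The one genuine gap is where you apply $f$ and $G$ to the majorant. Both are defined only on $[a,b]$, and nothing guarantees $y_t\in[a,b]$: the majorant can leave $[a,b]$ immediately (with $f\equiv 1$, $[a,b]=[0,1]$, $\alpha=1$, $h\equiv 1$ one has $y_t=1+t>b$ for every $t>0$). You do acknowledge that the majorant ``may escape $[a,b]$'', but you locate the issue at the final inversion step, whereas it already makes the differential inequality $\frac{\dd}{\dd t}G(y_t)\le h_t$ --- the inequality you integrate --- undefined. The repair is routine and should be written out. Either (i) extend $f$ to $[a,\infty)$ by the constant value $f(b)$: the extension $\bar f$ is still continuous, positive and non-decreasing, the hypothesis is unaffected since $x$ takes values in $[a,b]$, your computation then yields $\bar G(y_t)\le G(\alpha)+\int_{t_0}^t h_s\,\dd s$ for all $t\in[t_0,t_0+T]$, where $\bar G$ is the increasing bijection of $[a,\infty)$ onto $[G(a),\infty)$ extending $G$, and since $\bar G^{-1}$ coincides with $G^{-1}$ on ${\rm Dom}(G^{-1})$ the conclusion follows for $t\le t_0+\tilde T$. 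Or (ii) run the argument only up to $\tau:=\inf\{t:\, y_t=b\}$; if $\tau<t_0+\tilde T$, continuity forces $G(b)\le G(\alpha)+\int_{t_0}^{\tau}h_s\,\dd s\le G(b)$, hence $h=0$ a.e.\ on $[\tau,t_0+\tilde T]$, so $y\equiv b$ there and the bound persists. With either repair your proof is complete.
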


\begin{rem}\label{rem:domain}
It's easy to check, using the definition of $G$, that functions of the form $a\mapsto G^{-1}(G(a)+\kappa)$ as appears on the r.h.s. of \eqref{eq:bihari} do not depend on the choice of $x_0$ (as long as $a$ belongs to the corresponding domain). Since any concave modulus of continuity $f$ can grow at most linearly at infinity, under the assumption \eqref{eq:OsgoodIntegralCondition} we deduce that $G$ satisfies $\lim_{x\to 0} G(x) = -\infty$, $\lim_{x\to +\infty} G(x)=+\infty$ and ${\rm Dom}(G^{-1})=\RR$, so that we can take $\tilde{T}=T$.
\end{rem}

We now provide a stability estimate for solutions defined on the same probability space; given two drifts $B^1,B^2:[0,T]\times\RR^d\times \cP_1(\RR^d)\to \RR^d$, we introduce the notation
\[
\|B^1-B^2\|_\infty:= \sup_{(s,x,\mu)\in [0,T]\times \RR^d\times \cP_1(\RR^d) } |B^1_s(x,\mu)-B^2_s(x,\mu)|.\]
\begin{prop}
\label{prop:strong stability}
Let $B^1$ and $B^2$ be two drifts satisfying Assumption \ref{ass:Osgood} for the same $f$, $h$.
Assume $X^1$ and $X^2$ are two solutions to \eqref{eq:GenMckeanSDE} defined on the same probability space with input $(Y^1,B^1)$ and $(Y^2,B^2)$ respectively. Then 
\begin{equation}\label{eq:strong osgood stability}
\EE[\|X^1-X^2\|_{C_T}] \leq G^{-1}\left(G\left( \EE[\|Y^1-Y^2\|_{C_T}] + \|B^1-B^2\|_\infty T\right)+2\|h\|_{L^1_T}\right)
\end{equation}
where $G$ is defined in \eqref{eq:bihari-G}. In particular, pathwise uniqueness and uniqueness in law hold for the DDSDE.
\end{prop}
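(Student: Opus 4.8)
The plan is to derive a pathwise integral inequality for the difference $|X^1_t - X^2_t|$ and then apply Bihari's inequality (Proposition \ref{prop:Bihari}), in close analogy to the classical Osgood uniqueness proof for ODEs. First I would subtract the two DDSDE identities to write, $\PP$-a.s.,
\[
|X^1_t - X^2_t| \leq |Y^1_t - Y^2_t| + \int_0^t |B^1_s(X^1_s,\cL(X^1_s)) - B^2_s(X^2_s,\cL(X^2_s))| \dd s.
\]
The integrand is then split by inserting the intermediate term $B^1_s(X^2_s,\cL(X^2_s))$: the first piece is controlled by $\|B^1-B^2\|_\infty$ and the second by Assumption \ref{ass:Osgood}, yielding the bound $h_s f(|X^1_s - X^2_s| + d_1(\cL(X^1_s),\cL(X^2_s)))$. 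Taking the supremum over $r \leq t$ on the left and bounding $|Y^1_t - Y^2_t| \leq \|Y^1 - Y^2\|_{C_T}$, one reaches a Gr\"onwall-type inequality in $f$.

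The key structural point, and what I expect to be the main obstacle, is that the right-hand side involves both the pathwise quantity $\sup_{r\leq s}|X^1_r - X^2_r|$ and the deterministic quantity $d_1(\cL(X^1_s),\cL(X^2_s))$, so one cannot close the estimate at a purely pathwise level. The natural remedy is to pass to expectations early and work with the single scalar function $\phi(t) := \EE[\sup_{r \leq t}|X^1_r - X^2_r|]$. Using $d_1(\cL(X^1_s),\cL(X^2_s)) \leq \EE[|X^1_s - X^2_s|] \leq \phi(s)$ together with concavity of $f$ (which we may assume by the Stechkin remark) and Jensen's inequality in the form $\EE[f(\,\cdot\,)] \leq f(\EE[\,\cdot\,])$, the subadditivity $f(a+b) \leq f(a) + f(b) \leq 2f((a+b)/2)$ lets me absorb the two contributions into a single evaluation of $f$ at (a constant multiple of) $\phi(s)$. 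This is precisely the step that forces the factor $2\|h\|_{L^1_T}$ appearing in \eqref{eq:strong osgood stability}.

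Concretely, after taking expectations and applying Minkowski's integral inequality to move $\EE$ inside the time integral, I would arrive at
\[
\phi(t) \leq \alpha + \int_0^t 2 h_s\, f(\phi(s))\, \dd s, \qquad \alpha := \EE[\|Y^1 - Y^2\|_{C_T}] + \|B^1 - B^2\|_\infty\, T,
\]
where the constant $\alpha$ collects the initial-data discrepancy and the drift discrepancy. Applying Bihari's inequality with this $\alpha$, the weight $2h$, and the modulus $f$ gives $\phi(t) \leq G^{-1}(G(\alpha) + 2\|h\|_{L^1_T})$, which is exactly \eqref{eq:strong osgood stability} at $t = T$; here I invoke Remark \ref{rem:domain} to guarantee $\mathrm{Dom}(G^{-1}) = \RR$ so that the estimate holds up to the final time $\tilde T = T$.

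For the uniqueness conclusions, I would specialize to $B^1 = B^2$ and $Y^1 = Y^2$, so that $\alpha = 0$; since $\lim_{x \to 0} G(x) = -\infty$ (again by Remark \ref{rem:domain}), one has $G^{-1}(-\infty + 2\|h\|_{L^1_T}) = 0$, forcing $\phi(T) = 0$ and hence $X^1 = X^2$ $\PP$-a.s. This establishes pathwise uniqueness directly, and uniqueness in law then follows immediately from the Yamada--Watanabe-type Proposition \ref{prop:baby-yamada-watanabe}. The only technical care needed is the justification of Jensen/subadditivity under the expectation and checking that $\phi$ is finite and continuous (which follows from the a priori moment bounds, e.g. \eqref{eq:apriori-bound-exist}) so that Bihari's inequality applies legitimately.
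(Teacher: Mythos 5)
Your proposal is correct and follows essentially the same route as the paper's own proof: the same insertion of the intermediate term $B^1_s(X^2_s,\cL(X^2_s))$, the same passage to $\phi(t)=\EE\big[\sup_{r\leq t}|X^1_r-X^2_r|\big]$ via monotonicity, subadditivity, concavity and Jensen (producing the factor $2\|h\|_{L^1_T}$), and the same application of Bihari's inequality together with Remark \ref{rem:domain} to take $\tilde{T}=T$. Your deduction of pathwise uniqueness by letting the discrepancy $\alpha\to 0$ and of uniqueness in law via Proposition \ref{prop:baby-yamada-watanabe} also matches the paper exactly.
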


\begin{proof}
We begin by observing that
\begin{equation*}
  X^1_t-X^2_t = \int_0^t B^1_s(X^1_s,\cL(X^1_s))-B^2_s(X^2_s,\cL(X^2_s))\dd s +Y^1_t-Y^2_t;
\end{equation*}
adding and subtracting $B^1_s(X^2_s,\cL(X^2_s))$, applying Assumption \ref{ass:Osgood} and the monotonicity of $f$, we see that
\begin{align*}
  \sup_{r \leq t} |X^1_r - X^2_r|
  & \leq \| Y^1-Y^2\|_{C_T} + T \| B^1-B^2\|_{\infty} + \int_0^t h_s \, f(|X^1_s-X^2_s| + d_1(\cL(X^1_s),\cL(X^2_s))\, \dd s\\
  & \leq \| Y^1-Y^2\|_{C_T} + T \| B^1-B^2\|_{\infty} + \int_0^t h_s\, f\bigg(\sup_{r\leq s} |X^1_r-X^2_r| + \EE\Big[\,\sup_{r\leq s} |X^1_r-X^2_r|\Big]\bigg)\, \dd s
\end{align*}
 Taking expectation on both sides, using subadditivity and concavity of $f$ along with Jensen's inequality, we get that
\begin{align*}
  \EE\Big[\sup_{r \leq t} |X^1_r - X^2_r| \Big]
  & \leq \EE[\,\| Y^1-Y^2\|_{C_T} ] + T \| B^1-B^2\|_{\infty} + \int_0^t 2 h_s\, f\bigg( \EE\Big[ \sup_{r \leq s} |X^1_r - X^2_r|\Big]\bigg)\,\dd s.
\end{align*}
Applying Proposition \ref{prop:Bihari} readily gives estimate \eqref{eq:strong osgood stability}.

Regarding the last statement, observe that for $Y^1=Y^2$, $B^1=B^2$ estimate \eqref{eq:strong osgood stability} implies $X^1\equiv X^2$, proving pathwise uniqueness. Uniqueness in law then follows from Proposition \ref{prop:baby-yamada-watanabe}.
\end{proof}


Combining Assumptions \ref{ass:GenLinearGrowth} and \ref{ass:Osgood} we can obtain strong existence and uniqueness of solutions; moreover we can recast the stability estimate \eqref{eq:strong osgood stability} into a corresponding bound on the Wasserstein distance between the laws of solutions.
\begin{thm}
\label{thm:osgood-stability}
Let $B^1$ and $B^2$ satisfy Assumption \ref{ass:GenLinearGrowth}, with $p=1$ and Assumption \ref{ass:Osgood} for the same $f$, $h$ and let $\mu^{Y^1},\,\mu^{Y^2}\in \cP_1(C_T)$ be two input laws. Then for $i=1,\,2$ there exist unique weak solutions, $\mu^{X^i}$ associated to $(B^i, \mu^{Y^i})$. Furthermore, there exists a continuous, increasing function $M:\RR_+\to\RR_+$, $M(0)=0$, only depending on $f$ and $h$, such that
\begin{equation}\label{eq:osgood stability}
d_1(\mu^{X^1},\mu^{X^2})\leq M\left( d_1(\mu^{Y^1},\mu^{Y^2}) + T\|B^1-B^2\|_\infty \right). 
\end{equation}
\end{thm}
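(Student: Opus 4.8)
The plan is to combine the existence result of Proposition \ref{prop:GenExistenceUnderGrowth}, the pathwise stability estimate of Proposition \ref{prop:strong stability}, and the coupling device of Remark \ref{rem:baby-yamada-watanabe}. First, since each $B^i$ satisfies Assumption \ref{ass:GenLinearGrowth} with $p=1$ and $\mu^{Y^i}\in\cP_1(C_T)$, Proposition \ref{prop:GenExistenceUnderGrowth} furnishes a weak solution for each $i$; Assumption \ref{ass:Osgood} together with Proposition \ref{prop:strong stability} gives pathwise uniqueness, which Proposition \ref{prop:baby-yamada-watanabe} promotes to uniqueness in law. Hence for each $i$ the law $\mu^{X^i}:=\cL(X^i)$ is unambiguously defined, settling the first assertion.

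The main obstacle for the stability bound is that Proposition \ref{prop:strong stability} only applies to solutions realized on a common probability space, whereas a priori the two weak solutions live on different spaces and, more problematically, have distinct input laws $\mu^{Y^1}\neq\mu^{Y^2}$. To overcome this I would take the optimal coupling $m\in\Pi(\mu^{Y^1},\mu^{Y^2})$ realizing $d_1(\mu^{Y^1},\mu^{Y^2})$ and feed it into the construction of Remark \ref{rem:baby-yamada-watanabe}. This produces a single probability space carrying a quadruple $(\tilde X^1,\tilde Y^1,\tilde X^2,\tilde Y^2)$ with $\cL(\tilde X^i,\tilde Y^i)=\cL(X^i,Y^i)$ and $\cL(\tilde Y^1,\tilde Y^2)=m$, such that each $\tilde X^i$ solves the DDSDE with drift $B^i$ and input $\tilde Y^i$; crucially the marginal $\cL(\tilde X^i_t)=\cL(X^i_t)$ is preserved by the coupling, so the nonlinear term is the correct one.

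Now $\tilde X^1,\tilde X^2$ are genuine solutions on the same space with inputs $(\tilde Y^1,B^1)$, $(\tilde Y^2,B^2)$, so Proposition \ref{prop:strong stability} yields
\begin{equation*}
\EE\big[\|\tilde X^1-\tilde X^2\|_{C_T}\big]\leq G^{-1}\!\left(G\big(\EE[\|\tilde Y^1-\tilde Y^2\|_{C_T}]+\|B^1-B^2\|_\infty T\big)+2\|h\|_{L^1_T}\right).
\end{equation*}
Because $m$ is optimal, $\EE[\|\tilde Y^1-\tilde Y^2\|_{C_T}]=d_1(\mu^{Y^1},\mu^{Y^2})$, while $(\tilde X^1,\tilde X^2)$ being a coupling of $(\mu^{X^1},\mu^{X^2})$ gives $d_1(\mu^{X^1},\mu^{X^2})\leq\EE[\|\tilde X^1-\tilde X^2\|_{C_T}]$. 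Setting $M(u):=G^{-1}(G(u)+2\|h\|_{L^1_T})$ and substituting then produces exactly \eqref{eq:osgood stability}.

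It remains to check that $M$ has the stated properties. By Remark \ref{rem:domain} the expression $G^{-1}(G(u)+\kappa)$ is independent of the base point used to define $G$, so $M$ depends only on $f$ and $h$; since $G$ is a continuous increasing bijection, $M$ is continuous and increasing. The one genuinely delicate point is the normalization $M(0)=0$, which is precisely where the Osgood condition \eqref{eq:OsgoodIntegralCondition} enters: by Remark \ref{rem:domain} it forces $\lim_{u\to 0^+}G(u)=-\infty$ and $\mathrm{Dom}(G^{-1})=\RR$, whence $M(0^+)=G^{-1}(-\infty)=0$.
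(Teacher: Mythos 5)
Your proposal is correct, and it follows the paper's overall skeleton (existence via Proposition \ref{prop:GenExistenceUnderGrowth}, uniqueness via Propositions \ref{prop:strong stability} and \ref{prop:baby-yamada-watanabe}, optimal coupling of the inputs, then Proposition \ref{prop:strong stability} applied on the coupled space, with $M(r)=G^{-1}\big(G(r)+2\|h\|_{L^1_T}\big)$ and Remark \ref{rem:domain} handling $M(0)=0$). The one place where you genuinely diverge from the paper is the construction of the coupling $(\tilde X^1,\tilde X^2)$: the paper first upgrades weak existence to \emph{strong} existence via Proposition \ref{prop:equivalence-weak-strong}, so that given the optimal coupling $(Y^1,Y^2)$ it can build both solutions on that space and identify their laws by uniqueness in law; you instead invoke the disintegration coupling of Remark \ref{rem:baby-yamada-watanabe}, which glues the two weak solutions along the optimal coupling $m$ of the inputs. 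Your route is slightly more economical, since it bypasses Proposition \ref{prop:equivalence-weak-strong} and hence the (unstated in the paper) verification that the frozen-measure reference SDE \eqref{eq:reference-SDE} is well-posed for Osgood drifts; the small price is that Remark \ref{rem:baby-yamada-watanabe} is phrased for two solutions of the \emph{same} DDSDE, whereas you apply it with two different drifts $B^1,B^2$ — a benign extension, since the disintegration construction never uses the drift and the verification that each coupled process still solves its own equation only uses preservation of the joint law $\cL(X^i,Y^i)$, exactly as you note when you record that $\cL(\tilde X^i_t)=\cL(X^i_t)$ keeps the nonlinear term intact. The paper's route, by contrast, yields strong existence as a by-product (used elsewhere, e.g. in the mean-field section), which your argument does not provide but which the theorem's statement does not require.
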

\begin{proof}
Since both $B^1$ and $B^2$ satisfy Assumption \ref{ass:GenLinearGrowth}, weak existence holds by Proposition \ref{prop:GenExistenceUnderGrowth}; Assumption \ref{ass:Osgood} and Proposition \ref{prop:equivalence-weak-strong} then imply strong existence as well. Pathwise uniqueness and uniqueness in law instead are consequences of Proposition \ref{prop:strong stability}.

Now let $(Y^1,Y^2)$ be an optimal coupling for $d_1(\mu^{Y^1},\mu^{Y^2})$, which induces a coupling $(X^1,X^2)$ for $\mu^{X^1}, \mu^{X^2}$ by strong existence and uniqueness in law. Define $M:(0,+\infty)\to (0,+\infty)$ by
\begin{equation*}
  M(r) := G^{-1}\big(G(r) + 2 \| h\|_{L^1_T}\big)
\end{equation*}
for $G$ defined as in \eqref{eq:bihari-G}; by the properties $\lim_{x\to 0} G(x)=-\infty$, $\lim_{x\to-\infty}G^{-1}(x) = 0$ we deduce that $M$ extends uniquely and continuously at $r=0$ with $M(0)=0$.

But then inequality \eqref{eq:strong osgood stability} and the choice of optimal coupling $(Y^1,Y^2)$ readily implies
\begin{align*}
  d_1(\mu^{X^1},\mu^{X^2}) \leq \EE[\|X^1-X^2\|_{C_T}] \leq M\big( \EE[\|Y^1-Y^2\|_{C_T}] + T \| B^1-B^2\|_\infty\big)
\end{align*}
and the conclusion follows.
\end{proof}

Let us provide some relevant examples of drifts $B$ satisfying Assumption \ref{ass:Osgood} above. 
\begin{ex}\label{ex:TrueMKVOsgood}
Consider the case of a true McKean--Vlasov drift, i.e. $B$ of the form
\begin{equation*}
  B_t(x,\mu)=\int_{\RR^d} b_t(x,y)\,\mu(\dd y)
\end{equation*}
for some measurable function $b:[0,T]\times\RR^d\times\RR^d\to\RR^d$. Then Assumption \ref{ass:Osgood} is satisfied if there exist $h\in L^1_T$, and a (concave) modulus of continuity $f$ satisfying \eqref{eq:OsgoodIntegralCondition} such that
\begin{equation*}
  |b_t(x,z)-b_t(y,z')|\leq h_t \,f(|x-y|+|z-z'|).
\end{equation*}

Indeed, for any $\mu,\nu\in\cP_1(\RR^d)$ and any coupling $m\in\Pi(\mu,\nu)$ it holds
\begin{align*}
  |B_t(x,\mu)-B_t(y,\nu)| 
  & = \Big|\int_{\RR^d} b_t(x,z) \mu(\dd z) - \int_{\RR^d} b_t(y,z') \nu(\dd z')\Big|\\
  & \leq \int_{\RR^{2d}} |b_t(x,z)-b_t(y,z')|\, m(\dd z,\dd z')\\
  & \leq h_t \int_{\RR^{2d}} f(|x-y|+|z-z'|)\,m(\dd z,\dd z')\\
  & \leq h_t\,\Big[f(|x-y|) + \int_{\RR^{2d}} f(|z-z'|)\, m(\dd z,\dd z')\Big]\\
  & \leq h_t\,\Big[ f(|x-y|) + f \Big( \int_{\RR^{2d}} |z-z'|\, m(\dd z,\dd z')\Big)\Big]
\end{align*}
where we used monotonicity, subadditivity and concavity of $f$ as well as Jensen inequality; minimizing over $m\in\Pi(\mu,\nu)$ gives \eqref{eq:OsgoodDrift}.

Of particular relevance are convolutional drifts $B_t(x,\mu)=(b_t\ast\mu)(x)=\int b_t(x-z)\,\mu(\dd z)$, in which case the above condition reduces to
\begin{align*}
  |b_t(x)-b_t(y)|\leq h_t\, f(|x-y|),\quad \forall \,(t,x,y)\in [0,T]\times \RR^d\times\RR^d.
\end{align*}
\end{ex}
\subsubsection{Monotone drifts}\label{subsec:monotone}

Another classical assumption in ODE theory, that goes beyond the Lipschitz setting, is a monotonicity condition (sometimes also referred to as one-sided Lipschitz condition).
Similar assumptions in the DDSDE setting have been employed for example in \cite[Sec. 5]{Gartner1988}; see also the more recent works \cite{wang2018distribution} (Assumption (H2) therein) and \cite{dos2019freidlin}. Finally, let us mention that monotonicity may be interpreted as a Lyapunov condition on the energy $|\cdot|^2$ and is therefore also close in spirit to the assumptions considered in \cite{mehri2019weak}, \cite{hammersley2018mckean}.
 
We consider the following working assumption for the drift $B$. 

\begin{ass}\label{ass:monotone-coeff}
Given $p\in (1,\infty)$, there exists an $h\in L^1_T$ such that
\begin{equation}\label{eq:monotone-coeff}
  \langle x-y, B_t(x,\mu)-B_t(y,\nu)\rangle \leq h_t \big[ |x-y|^2 + |x-y|\,d_p(\mu,\nu)\big]
\end{equation}
uniformly over $x,y\in\RR^d$, and $\mu,\nu\in \cP_p(\RR^d)$.
\end{ass}

\begin{prop}
Let $B:[0,T]\times \RR^d \times \cP_p(\RR^d)\rightarrow \RR^d$ be a measurable map satisfying Assumption \ref{ass:monotone-coeff} for some $p\in (1,\infty)$; then pathwise uniqueness and uniqueness in law hold for \eqref{eq:GenMckeanSDE}.
\end{prop}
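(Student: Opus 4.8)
The plan is to exploit the additive structure of the noise, which reduces pathwise uniqueness to a Grönwall-type estimate with no stochastic calculus on $Y$ whatsoever. Suppose $X^1, X^2$ are two solutions on the same probability space driven by the same input $Y$. Their difference $Z := X^1 - X^2$ satisfies $Z_0 = 0$ and, since the $Y_t$ terms cancel,
\[
Z_t = \int_0^t \big[ B_s(X^1_s, \cL(X^1_s)) - B_s(X^2_s, \cL(X^2_s))\big]\dd s,
\]
so $Z$ is absolutely continuous with $\dot Z_t = B_t(X^1_t, \cL(X^1_t)) - B_t(X^2_t, \cL(X^2_t))$ for a.e.\ $t$, the integrand being integrable by point $(3)$ of Definition \ref{def:WeakSolsI}.

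First I would establish a pathwise differential inequality for $|Z_t|$. Testing $\dot Z_t$ against $Z_t$ and applying the monotonicity bound \eqref{eq:monotone-coeff} with $x = X^1_t$, $y = X^2_t$, $\mu = \cL(X^1_t)$, $\nu = \cL(X^2_t)$ gives
\[
\tfrac{1}{2}\tfrac{\dd}{\dd t} |Z_t|^2 = \langle Z_t, \dot Z_t\rangle \leq h_t\big[ |Z_t|^2 + |Z_t|\, d_p(\cL(X^1_t), \cL(X^2_t))\big].
\]
To pass from $|Z_t|^2$ to $|Z_t|$ without trouble at the zeros of $Z$, I would regularize, working with $\phi_\eps(t) := (|Z_t|^2 + \eps^2)^{1/2}$; dividing the above by $\phi_\eps$ and using $|Z_t| \le \phi_\eps(t)$ yields $\tfrac{\dd}{\dd t}\phi_\eps(t) \le h_t(\phi_\eps(t) + d_p(\cL(X^1_t), \cL(X^2_t)))$, and sending $\eps \to 0$ after integrating leaves the pathwise bound
\[
|Z_t| \le \int_0^t h_s\big( |Z_s| + d_p(\cL(X^1_s), \cL(X^2_s))\big)\dd s.
\]

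Next I would close the estimate by taking $L^p_\Omega$ norms. Writing $\psi(t) := \EE[|Z_t|^p]^{1/p}$ and applying Minkowski's integral inequality exactly as in the proof of Proposition \ref{prop:MKV basic result}, together with the coupling inequality $d_p(\cL(X^1_s), \cL(X^2_s)) \le \EE[|X^1_s - X^2_s|^p]^{1/p} = \psi(s)$, gives the closed inequality $\psi(t) \le 2\int_0^t h_s \psi(s)\dd s$. Since $\psi(0) = 0$ and $h \in L^1_T$, Grönwall's lemma forces $\psi \equiv 0$, hence $X^1_t = X^2_t$ almost surely for every $t$ and, by path continuity, $X^1 = X^2$ $\PP$-a.s.; this is pathwise uniqueness. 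Uniqueness in law then follows immediately from Proposition \ref{prop:baby-yamada-watanabe}.

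The step I expect to require the most care is the interaction between the pathwise estimate and the distributional term $d_p$. Monotonicity controls only the inner product paired with the square $|x-y|^2$, so it is crucial that the Wasserstein term enters \eqref{eq:monotone-coeff} to the first power in $|x-y|$: this is precisely what makes the division by $\phi_\eps$ produce a term linear in $d_p$, which can then be bounded by $\psi(s)$ and absorbed into the same $L^p$ scale as $|Z_t|$. A single Grönwall argument thus closes for every $p \in (1,\infty)$, rather than forcing a mismatch between an $L^2$-type energy and an $L^p$ Wasserstein distance.
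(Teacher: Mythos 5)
Your proof is correct and shares the paper's skeleton: apply the monotonicity bound \eqref{eq:monotone-coeff} to the absolutely continuous difference $Z=X^1-X^2$, close a Gr\"onwall estimate via the coupling inequality $d_p(\cL(X^1_s),\cL(X^2_s))\leq \EE[|X^1_s-X^2_s|^p]^{1/p}$, and deduce uniqueness in law from Proposition \ref{prop:baby-yamada-watanabe}. The closing mechanism, however, is genuinely different. The paper differentiates $|Z_t|^p$ directly (legitimate for $p>1$, since $x\mapsto|x|^p$ is $C^1$ with gradient vanishing at the origin, so no regularization is needed), obtaining
\[
\tfrac{\dd}{\dd t}|Z_t|^p \leq p\,h_t\big[|Z_t|^p + |Z_t|^{p-1}\, d_p(\cL(X^1_t),\cL(X^2_t))\big],
\]
and then absorbs the cross term with Young's inequality $ab\lesssim a^{\frac{p}{p-1}}+b^p$, running Gr\"onwall on $\EE[|Z_t|^p]$. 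You instead linearize pathwise via $\phi_\eps=(|Z_t|^2+\eps^2)^{1/2}$, obtain an integral inequality for $|Z_t|$ itself, and upgrade to $L^p_\Omega$ with Minkowski's integral inequality, mirroring the Lipschitz-case proof of Proposition \ref{prop:MKV basic result}; Gr\"onwall then runs on $\psi(t)=\EE[|Z_t|^p]^{1/p}$, which is finite by point $(2)$ of Definition \ref{def:WeakSolsI}, just as in the paper. Your route is marginally more elementary (no Young's inequality, no exponent bookkeeping), and your closing remark correctly identifies the structural reason it works: in \eqref{eq:monotone-coeff} the Wasserstein term is paired with $|x-y|$ to the \emph{first} power, so dividing by $\phi_\eps$ yields a term linear in $d_p$. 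The trade-off is flexibility: the paper's $p$-energy computation extends, for $p\geq 2$, to the weakened condition \eqref{eq:monotone-coeff-2} where the right-hand side is $h_t[|x-y|^2+d_p(\mu,\nu)^2]$ (using $ab\lesssim a^{\frac{p}{p-2}}+b^{\frac{p}{2}}$, as in the Remark following the proposition), whereas your division by $\phi_\eps$ would there produce the non-linearizable term $d_p^2/\phi_\eps$, so your trick does not cover that variant directly.
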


\begin{proof}
By virtue of Proposition \ref{prop:baby-yamada-watanabe}, we only need to establish pathwise uniqueness.
Let $X^1,X^2$ be two solutions to \eqref{eq:GenMckeanSDE} defined on the same probability space and with same input $Y$; setting $\mu^i_t =\cL(X^i_t)$ for $i=1,2$, it holds that
\[
X^i_t = \int_0^t B_s(X^i_s,\mu^i_s)\,\dd s + Y_t
\]
and the difference $X^1-X^2$ is an absolutely continuous path. By Assumption \ref{ass:monotone-coeff} we have that
\begin{align*}
  \frac{\dd}{\dd t} |X^1_t-X^2_t|^p
  & = p\,|X_t^1-X_t^2|^{p-2}\, \langle X^1_t-X^2_t, B_t(X^1_t,\mu^1_t)- B_t(X^2_t,\mu^2_t)\rangle\\
  & \leq p\, h_t \big[ |X^1_t-X^2_t|^p+ |X^1_t-X^2_t|^{p-1} \, d_p(\mu^1_t,\mu^2_t)\big]\\
  & \lesssim_p h_t\left( |X^1_t-X^2_t|^p + d_p(\mu^1_t,\mu^2_t)^p\right)
\end{align*}
where in the last passage we used the basic inequality $ab\lesssim a^{\frac{p}{p-1}} + b^p$. Since $X^1_0=Y_0=X^2_0$, integrating and taking expectations we obtain
\begin{align*}
  \EE\big[|X^1_t-X^2_t|^p\big]
  \lesssim \int_0^t h_s\Big( \EE[|X^1_s-X^2_s|^p] + d_p(\mu^1_s,\mu^2_s)^p\Big) \dd s
  \lesssim \int_0^t h_s\EE[|X^1_s-X^2_s|^p] \dd s
\end{align*}
and by Gr\"onwall we deduce that $\EE[|X^1_t-X^2_t|^p] =0$ for all $t\in [0,T]$; since $X^i$ are continuous paths,
pathwise uniqueness follows.
\end{proof}

\begin{rem}
If $p\geq 2$, we can further weaken condition \eqref{eq:monotone-coeff} by requiring instead
\begin{equation}\label{eq:monotone-coeff-2}
   \langle x-y, B_t(x,\mu)-B_t(y,\nu)\rangle \leq h_t \big[ |x-y|^2 + d_p(\mu,\nu)^2\big]
\end{equation}
uniformly over $t,x,y,\mu,\nu$. Indeed, going through the same computations as in the proof above, gives
\begin{align*}
  \frac{\dd}{\dd t} |X^1_t-X^2_t|^p
  & \leq p\, h_t \big[ |X^1_t-X^2_t|^p+ |X^1_t-X^2_t|^{p-2} \, d_p(\mu^1_t,\mu^2_t)^2\big]\\
  & \lesssim_p h_t \big[ |X^1_t-X^2_t|^p + d_p(\mu^1_t,\mu^2_t)^p \big]
\end{align*}
where we used the basic inequality $ab\lesssim a^{\frac{p}{p-2}} + b^{\frac{p}{2}}$ which holds since $p/2\geq 1$.
\end{rem}

\begin{ex}\label{ex:monotone-1}
It is clear that any Lipschitz $B$ satisfies Assumption \ref{ass:monotone-coeff}; more generally, one can take $B_t(x,\mu)=F_t(x) + G(x,\mu)$ for globally Lipschitz $G$ and $F$ satisfying
\begin{equation}\label{eq:monotone-ex}
  \langle F_t(x)-F_t(y),x-y\rangle \leq h_t |x-y|^2.
\end{equation}
A similar computation holds for $B_t(x,\mu)=F_t(x)G(\mu)$ with $F$ as above, once we additionally impose $\| F_t\|_{L^\infty} \leq h_t$ and $G:\cP_p(\RR^d)\to\cP_p(\RR^d)$ is globally Lipschitz and bounded.

In the case $h\equiv C$ for some $C\in \RR$, condition \eqref{eq:monotone-ex} is satisfied if $F=-\nabla V$ for some $V:\RR^d\to\RR$ such that $x\mapsto V(x)+\lambda |x|^2/2$ is convex for some $\lambda\geq 0$; interesting cases include $F(x)=-x e^{|x|^2}$ and $F(x)= -|x|^{\gamma-1} x$ for $\gamma\geq 0$ where for $\gamma\in [0,1)$, $F$ is not even locally Lipschitz.
\end{ex}
Although Assumption \ref{ass:monotone-coeff} covers a wide range of drifts, sometimes it's natural to further weaken it to an \textit{integrated} monotonicity assumption.

\begin{lem}
Suppose $B:[0,T]\times\RR^d\times \cP_2(\RR^d)\to \RR^d$ satisfies
\begin{equation}\label{eq:monotone-integr-cond}
  \int_{\RR^{2d}} \langle x-y, B_t(x,\mu)-B_t(y,\nu)\rangle\, m(\dd x,\dd y) \leq h_t\,\int_{\RR^{2d}} |x-y|^2\, m(\dd x,\dd y)
\end{equation}
uniformly over all possible $t\in [0,T]$, $\mu,\nu\in \cP_2(\RR^d)$ and $m\in \Pi(\mu,\nu)$, for some integrable $h:[0,T]\to\RR$;
then pathwise uniqueness and uniqueness in law hold for the DDSDE.
\end{lem}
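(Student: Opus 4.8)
The plan is to transcribe the proof of the pointwise monotone case, the genuinely new point being that two solutions sharing the same input $Y$ furnish, through their joint law, an admissible coupling against which the \emph{integrated} condition \eqref{eq:monotone-integr-cond} can be tested. By Proposition \ref{prop:baby-yamada-watanabe} it suffices to prove pathwise uniqueness. I would therefore fix two solutions $X^1,X^2$ of \eqref{eq:GenMckeanSDE} on a common probability space with the same input $Y$, write $\mu^i_t:=\cL(X^i_t)$, and observe that the $Y_t$ contributions cancel, so that $X^1-X^2$ is absolutely continuous with $X^1_0-X^2_0=0$. By point (2) of Definition \ref{def:WeakSolsI} (with $p=2$) both marginals $\mu^i_t$ lie in $\cP_2(\RR^d)$, and the joint law $m_t:=\cL(X^1_t,X^2_t)$ is a coupling of $(\mu^1_t,\mu^2_t)$.

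The core computation is then immediate: differentiating the squared distance pathwise and taking expectations,
\[
\frac{\dd}{\dd t}\,\EE\big[|X^1_t-X^2_t|^2\big]
= 2\int_{\RR^{2d}} \langle x-y,\,B_t(x,\mu^1_t)-B_t(y,\mu^2_t)\rangle\, m_t(\dd x,\dd y),
\]
since the right-hand side is precisely $2\,\EE[\langle X^1_t-X^2_t, B_t(X^1_t,\mu^1_t)-B_t(X^2_t,\mu^2_t)\rangle]$ rewritten via $m_t$. Applying \eqref{eq:monotone-integr-cond} with $\mu=\mu^1_t$, $\nu=\mu^2_t$ and $m=m_t$ bounds this by $2h_t\int_{\RR^{2d}}|x-y|^2\,m_t(\dd x,\dd y)=2h_t\,\EE[|X^1_t-X^2_t|^2]$. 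As $\EE[|X^1_0-X^2_0|^2]=0$, Gr\"onwall's lemma yields $\EE[|X^1_t-X^2_t|^2]=0$ for all $t\in[0,T]$, whence $X^1=X^2$ $\PP$-a.s. by path continuity; uniqueness in law then follows from Proposition \ref{prop:baby-yamada-watanabe}.

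The step that needs justification, and the only real obstacle, is the interchange of expectation and time integration implicit in the displayed identity. In contrast with the pointwise case \eqref{eq:monotone-coeff}, the scalar $G_s:=\langle X^1_s-X^2_s, B_s(X^1_s,\mu^1_s)-B_s(X^2_s,\mu^2_s)\rangle$ carries no definite sign along individual paths, so Tonelli cannot be invoked directly; only its $m_s$-average is controlled, through \eqref{eq:monotone-integr-cond}. I would make the argument rigorous by starting from the pathwise identity $|X^1_t-X^2_t|^2 = 2\int_0^t G_s\,\dd s$, decomposing $G_s=G_s^+-G_s^-$, and applying Tonelli to the nonnegative parts to obtain $\EE[|X^1_t-X^2_t|^2]+2\int_0^t\EE[G_s^-]\,\dd s = 2\int_0^t\EE[G_s^+]\,\dd s$. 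The uniform bound $\sup_t\EE[|X^i_t|^2]<\infty$ makes the right-hand side of \eqref{eq:monotone-integr-cond} finite for every $t$, and the integrability of $G$ along solutions --- which is exactly the requirement that the left-hand side of \eqref{eq:monotone-integr-cond} be well-defined for the coupling $m_t$ --- ensures $\EE[G_s^\pm]<\infty$ for a.e. $s$, so the two time integrals are finite and recombine into $\EE[|X^1_t-X^2_t|^2]=2\int_0^t\EE[G_s]\,\dd s$. The Gr\"onwall step above then applies verbatim.
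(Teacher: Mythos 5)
Your proof is correct and follows essentially the same route as the paper's: reduce to pathwise uniqueness via Proposition \ref{prop:baby-yamada-watanabe}, write the pathwise identity $|X^1_t-X^2_t|^2=2\int_0^t\langle X^1_s-X^2_s,\,B_s(X^1_s,\mu^1_s)-B_s(X^2_s,\mu^2_s)\rangle\,\dd s$, take expectations, test \eqref{eq:monotone-integr-cond} against the coupling $m_s=\cL(X^1_s,X^2_s)$, and conclude by Gr\"onwall. Your extra care in justifying the exchange of expectation and time integral via the decomposition $G_s=G_s^+-G_s^-$ is a welcome refinement of a step the paper performs without comment, not a different argument.
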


\begin{proof}
By virtue of Proposition \ref{prop:baby-yamada-watanabe}, we only need to establish pathwise uniqueness.
Let $X^1$,$X^2$ be two solutions defined on the same probability space $(\Omega,\cF,\PP)$ and with same input $Y$. It follows that $X^1-X^2$ is a process of bounded variation satisfying $\PP$-a.s.
\[
|X^1_t - X^2_t|^2
= 2\int_0^t\langle X^1_t- X^2_t, B_s(X^1_s,\cL(X^1_s))-B_s(X^2_s,\cL(X^2_s))\rangle \,\dd s \quad \forall\, t\in [0,T].
\]
Taking expectation and applying assumption \eqref{eq:monotone-integr-cond} for $m=\PP\circ (X^1_s,X^2_s)^{-1}\in \Pi(\cL(X^1_s),\cL(X^2_s))$, we arrive at
\begin{align*}
  \EE[|X^1_t - X^2_t|^2]
  \leq 2\int_0^t h_s\, \EE[|X^1_s-X^2_s|^2]\, \dd s
\end{align*}
and the conclusion follows by Gr\"onwall's lemma.
\end{proof}
\begin{ex}
Let $b:\RR^d\times \RR^d\to \RR^d$ be a function satisfying
\[
b(x,x')= - b(x',x), \quad
\langle b(x,x')-b(y,y'),x-x'-y+y'\rangle \leq C (|x-y|^2 + |x'-y'|^2)
\]
for some constant $C\in \RR$, uniformly over $x,x',y,y'\in \RR^d$;
then the McKean-Vlasov drift $B_t(x,\mu)=\int_{\RR^d} b(x,x')\,\mu(\dd x')$ satisfies condition \eqref{eq:monotone-integr-cond} with $h_t=C$.

Indeed, for any $m\in \Pi(\mu,\nu)$ it holds
\begin{align*}
  \int_{\RR^{2d}} \langle x-y, & B_t(x,\mu)-B_t(y,\nu)\rangle\, m(\dd x,\dd y)\\
  & = \int_{\RR^{2d}\times \RR^{2d}} \langle x-y, b(x,x')-b(y,y')\rangle \, m(\dd x,\dd y) m(\dd x',\dd y')\\
  & = \frac{1}{2} \int_{\RR^{2d}\times \RR^{2d}} \langle x-y-x'+y', b(x,x')-b(y,y')\rangle \, m(\dd x,\dd y) m(\dd x',\dd y')
\end{align*}
where the last passage follows by exchanging $(x,y)$ and $(x',y')$ and applying $b(x',x)=-b(x,x')$; by the monotonicity condition on $b$ we readily obtain the conclusion.

For convolutional drifts $B=b\ast \mu$ it suffices to require
\[
b(-x)=-b(x),\quad \langle x-y,b(x)-b(y)\rangle\leq 2C |x-y|^2 \quad \forall\, x,y\in\RR^d;
\]
as in Example \ref{ex:monotone-1}, this is true for instance if $b(x)=-\nabla V(x)$ for some even function $V:\RR^d\to\RR$ such that $x\mapsto V(x)+\lambda|x|^2$ is convex fr some $\lambda\geq 0$. This class includes drifts not satisfying the linear growth assumption \ref{ass:GenLinearGrowth}, but for which uniqueness still holds.

Let us mention that the special cases $b(x)=-\lambda |x|^{\gamma-1} x$ for $\lambda,\gamma>0$ are also the drift terms considered in \cite{wang2018distribution,fournier17landau,carrillo20landau}; the setting of these works however, also includes a non-linear diffusion term, associated to kernels of the form $a(x) = |x|^{\gamma-1} (|x|^2\,I_d - x \otimes x)$.
\end{ex}
It follows from the above results that if $B$ satisfies Assumption \ref{ass:GenLinearGrowth} and one between Assumption \ref{ass:monotone-coeff} or conditions \eqref{eq:monotone-coeff-2}, \eqref{eq:monotone-integr-cond}, then for any $\mu^Y\in \cP_p(C_T)$ there exists a unique solution $\mu^X\in \cP_p(C_T)$; this correspondence defines a solution map $S^B:\cP_p(C_T)\to \cP_p(C_T)$. Unfortunately we cannot establish Lipschitz continuity for $S^B$, the issue being similar to that of Remark \ref{rem:growth-vs-motononicity} and related to controlling $\int_0^t \langle X^1_s-X^2_s, \dd (Y^1_s-Y^2_s) \rangle$; we can still at least show continuity.

\begin{lem}\label{lem:monotone-sol-map}
Let the drift $B$ satisfy the above assumptions, then the solution map $S^B:\cP_p(C_T)\to \cP_p(C_T)$ is continuous.
\end{lem}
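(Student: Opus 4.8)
The plan is to exploit compactness together with the uniqueness already established, since (as noted in the text) no quantitative Lipschitz-type stability estimate is available. Concretely, fix a sequence $\mu^{Y,n}\to\mu^Y$ in $\cP_p(C_T)$ and let $\mu^{X,n}=S^B(\mu^{Y,n})$, $\mu^X=S^B(\mu^Y)$ be the associated unique solutions (which exist by Assumption \ref{ass:GenLinearGrowth} via Proposition \ref{prop:GenExistenceUnderGrowth} and are unique in law by the monotonicity assumption together with Proposition \ref{prop:baby-yamada-watanabe}). The goal is to show $d_p(\mu^{X,n},\mu^X)\to 0$, and I would do this by a subsequence argument: prove that every subsequence admits a further subsequence converging to the \emph{same} limit $\mu^X$.

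First I would establish tightness. Realising $Y^n$ as the canonical process on $\cP_p(C_T)$ and writing $X^n=Z^n+Y^n$, the linear growth bound in \eqref{eq:TimeInhomLinearGrowth} gives, exactly as in the derivation of \eqref{eq:MKV basic apriori2} and \eqref{eq:MKV basic apriori3} (which only use linear growth and Gr\"onwall, never the Lipschitz structure), both the pathwise bound $\|X^n\|_{C_T}\lesssim_h 1+\|Y^n\|_{C_T}$ and a uniform control on $\EE[\llbracket Z^n\rrbracket_h^p]$. By Arzel\`a--Ascoli the sublevel sets of $\llbracket\,\cdot\,\rrbracket_h$ are compact in $C_T$, so $\{\cL(Z^n)\}_n$ is tight; since $\{\mu^{Y,n}\}_n$ is convergent in $\cP_p(C_T)$ and hence tight, the joint laws $\{\cL(Y^n,X^n)\}_n$ are tight on $C_T\times C_T$.

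Next, along any subsequence I would extract (Prokhorov) a weakly convergent sub-subsequence and invoke Skorokhod's representation theorem to obtain, on a new space, variables $(\tilde Y^n,\tilde X^n)\to(\tilde Y,\tilde X)$ $\tilde\PP$-a.s.\ with matching laws. Because $d_p(\mu^{Y,n},\mu^Y)\to 0$ yields both weak convergence and $p$-moment convergence (Villani, Thm.\ 6.9 as cited), Vitali's theorem upgrades the a.s.\ convergence to $\tilde Y^n\to\tilde Y$ in $L^p_{\tilde\Omega}C_T$ with $\cL(\tilde Y)=\mu^Y$; the pathwise bound above then forces uniform $p$-integrability of $\{\tilde X^n\}$, and Vitali again gives $\tilde X^n\to\tilde X$ in $L^p_{\tilde\Omega}C_T$, whence $d_p(\cL(\tilde X^n_t),\cL(\tilde X_t))\to 0$ for each $t$. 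With the a.s.\ convergence $(\tilde X^n_t,\cL(\tilde X^n_t))\to(\tilde X_t,\cL(\tilde X_t))$ in $\RR^d\times\cP_p(\RR^d)$ in hand, I would pass to the limit in $\int_0^\cdot B_s(\tilde X^n_s,\cL(\tilde X^n_s))\dd s$ verbatim as in Proposition \ref{prop:GenExistenceUnderGrowth}, using the continuity of $B$ from Assumption \ref{ass:GenLinearGrowth}(i) and a uniform-integrability bound from \eqref{eq:TimeInhomLinearGrowth}, to conclude that $(\tilde X,\tilde Y)$ is a weak solution of \eqref{eq:GenMckeanSDE} with input law $\mu^Y$.

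Finally, uniqueness in law forces $\cL(\tilde X)=\mu^X$, independently of the chosen subsequence; moreover the $L^p$ convergence $\tilde X^n\to\tilde X$ delivers both weak convergence $\mu^{X,n}\rightharpoonup\mu^X$ and $\|\mu^{X,n}\|_p\to\|\mu^X\|_p$, i.e.\ $d_p(\mu^{X,n},\mu^X)\to 0$ along the sub-subsequence. Since the limit is always $\mu^X$, the usual subsequence principle yields convergence of the full sequence, proving continuity of $S^B$. I expect the main obstacle to be the limit passage in the distribution-dependent drift: one must simultaneously secure a.s.\ convergence of the measure argument $\cL(\tilde X^n_s)$ in $\cP_p(\RR^d)$ and the uniform integrability needed for Vitali, and it is precisely the interplay between the compactness estimates and the $L^p$-convergence of the inputs that makes this work despite the absence of any Lipschitz bound.
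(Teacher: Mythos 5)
Your proposal is correct and follows essentially the same route as the paper's own proof: tightness of the joint laws via the linear-growth a priori bounds, Prokhorov--Skorokhod extraction, identification of the limit as a weak solution by the same limit passage as in Proposition \ref{prop:GenExistenceUnderGrowth}, and then uniqueness in law combined with the subsequence principle. The only difference is that you spell out the compactness and Vitali details that the paper compresses into ``arguing as in the proof of Proposition \ref{prop:GenExistenceUnderGrowth}''.
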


\begin{proof}
Given a sequence $\{\mu^{Y^n}\}_n$ with $\mu^{Y^n}\to \mu^Y$ in $\cP_p(C_T)$, we need to show that $\mu^{X^n}=S^B(\mu^{Y^n})\to S^B(\mu^Y)=\mu^X$. By the assumption and estimate \eqref{eq:apriori-bound-exist} it follows that $(\mu^{Y^n},\mu^{X^n})$ is tight in $\cP(C_T\times C_T)$, so by Prokhorov's theorem we can extract a (not relabelled) subsequence s.t. $(\mu^{Y^n},\mu^{X^n})\rightharpoonup (\mu^Y,\mu^Z)$ for a suitable $\mu^Z\in \cP(C_T)$.
By Skorokhod's theorem, we can work on a common probability space where all processes $(X^n,Y^n)$ and $(Z,Y)$ are well defined; using the fact that $Y^n\to Y \in L^p(\Omega;C_T)$, $X^n$ satisfy \eqref{eq:apriori-bound-exist} and arguing as in the proof of Proposition \ref{prop:GenExistenceUnderGrowth} we deduce that $X^n\to Z$ in $L^p(\Omega;C_T)$ and that $Z$ solves the DDSDE associated to $Y$.
But then by uniqueness $Z=X$ and $\mu^{X^n}\to \mu^X$ in $\cP_p(C_T)$; as the reasoning works for any possible subsequence, conclusion follows.
\end{proof}

\subsubsection{Locally Lipschitz drifts with growth conditions}\label{subsec:loc-lipschitz}

In the classical ODE and SDE setting, it is well known that uniqueness holds as soon as the drift is merely locally Lipschitz; the same is not true for DDSDEs, with explicit counterexamples given in \cite{scheutzow1987uniqueness}.
Since global Lipschitz assumptions are often too restrictive to work with, we provide here some uniqueness results for drifts with local Lipschitz constant not growing too fast; the price one has to pay is a strong integrability requirement on the input $\mu^Y$.
Similar type of conditions have been proposed in the classical work \cite{bolley2011stochastic} and more recently in \cite{wang2018distribution}, \cite{erny2021wellposedness}.

\begin{ass}\label{ass:local-lipschitz}
Given $\alpha> 0$ and $p\geq 1$; the drift $B:[0,T]\times\RR^d\times \cP(\RR^d)\to \RR^d$ and input data $\mu^Y$ satisfy the following:
\begin{enumerate}[label=\roman*.]
\item $|B_t(x,\mu)|\leq h_t (1+|x|+\| \mu\|_p)$, \label{it:LocalLipLinGrowth}
\item $|B_t(x,\mu)-B_t(y,\nu)| \leq g_t (|x-y|+d_p(\mu,\nu)) (1+|x|^\alpha+|y|^\alpha+\| \mu\|_p^\alpha+\| \nu\|_p^\alpha)$, \label{it:LocalLip}
\item $\int_{C_T} \exp(\lambda \|\omega \|_{C_T}^\alpha) \mu^Y(\dd \omega)<\infty$ for all $\lambda\in\RR$, \label{it:LocalLipExpMoments}
\end{enumerate}
for integrable functions $h,g\in L^1_T$.
\end{ass}

To obtain a stability estimate under Assumption \ref{ass:local-lipschitz},
we need the following simple lemma.
\begin{lem}\label{lem:ConvexExponential}
For any $\gamma \in (0,1)$ there exists a $C>0$ such that the map $\RR^d \ni x\mapsto e^{(C+|x|)^\gamma}$ is convex.
\end{lem}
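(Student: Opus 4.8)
The plan is to exploit the radial structure of the map and thereby reduce the claim to a one-dimensional statement. Writing $f(x)=\phi(|x|)$ with $\phi(r):=e^{(C+r)^\gamma}$ defined on $[0,\infty)$, I would invoke the standard composition principle: if $\phi:[0,\infty)\to\RR$ is convex and non-decreasing and $g:\RR^d\to[0,\infty)$ is convex, then $\phi\circ g$ is convex. Taking $g=|\cdot|$, which is convex with range $[0,\infty)$, it then suffices to verify that $\phi$ is convex and non-decreasing on $[0,\infty)$ for a suitable $C>0$.

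Both properties follow from elementary derivative computations. Monotonicity is immediate, since
\[
\phi'(r) = \gamma\,(C+r)^{\gamma-1}\, e^{(C+r)^\gamma} > 0 \quad \text{for all } r\geq 0.
\]
For convexity I would compute
\[
\phi''(r) = \gamma\,(C+r)^{\gamma-2}\, e^{(C+r)^\gamma}\big[(\gamma-1) + \gamma\,(C+r)^\gamma\big],
\]
so that the sign of $\phi''$ is governed by the bracketed term. Since $\gamma\in(0,1)$ the factor $(\gamma-1)$ is negative, but $(C+r)^\gamma$ is increasing in $r$; hence $\phi''(r)\geq 0$ for every $r\geq 0$ as soon as $(\gamma-1)+\gamma C^\gamma\geq 0$, that is, as soon as
\[
C \geq \Big(\frac{1-\gamma}{\gamma}\Big)^{1/\gamma}.
\]
Choosing any such $C$ then yields both required properties and closes the argument via the composition principle.

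The one genuine subtlety — which I expect to be the only real obstacle — is the behaviour at the origin: the map $x\mapsto (C+|x|)^\gamma$ has a conical kink at $x=0$ (its radial derivative $\gamma C^{\gamma-1}$ does not vanish there), so $f$ is not $C^2$ on all of $\RR^d$ and a direct Hessian positivity computation would require separately treating $x=0$. The composition approach sidesteps this entirely, since the kink points upward and convexity of $\phi\circ|\cdot|$ is deduced without invoking any smoothness at the origin. As a consistency check one may still examine the smooth region $x\neq 0$: the Hessian of $f$ has eigenvalues $\phi''(|x|)$ in the radial direction and $\phi'(|x|)/|x|$ with multiplicity $d-1$ in the tangential directions, both non-negative by the computations above; this agrees with, but is less clean than, the composition argument.
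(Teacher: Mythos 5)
Your proposal is correct, and its computational core is identical to the paper's: the paper's (very terse) proof rests on exactly the same one-dimensional second-derivative computation, with the sign of
\[
\phi''(r) \;=\; \gamma\,(C+r)^{\gamma-2}\,e^{(C+r)^\gamma}\,\big[\gamma\,(C+r)^\gamma+\gamma-1\big]
\]
controlled by the same condition $\gamma C^\gamma+\gamma-1\geq 0$ that you derive. Where you genuinely differ is in the reduction from $\RR^d$ to one dimension. The paper says it ``reduces to the one dimensional case by arguing component-wise,'' which taken literally is not a valid reduction: convexity in each coordinate separately does not imply joint convexity (consider $(x,y)\mapsto xy$), and the function here is radial rather than a sum or product of coordinate functions, so a coordinate-wise argument does not directly apply. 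Your route through the composition principle --- $\phi$ convex and non-decreasing on $[0,\infty)$ composed with the convex map $x\mapsto |x|$ --- is the standard rigorous way to exploit the radial structure, and it has the additional merit, which you correctly flag, of sidestepping the failure of $f$ to be $C^2$ at the origin (the conical kink of $(C+|x|)^\gamma$ at $x=0$), a point the paper's derivative check passes over in silence. In short: same key estimate and same choice of $C$, but your reduction is cleaner and actually repairs the one loose step in the paper's two-line argument; your Hessian eigenvalue check away from the origin is a correct consistency verification, though, as you note, unnecessary once the composition principle is invoked.
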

\begin{proof}
We reduce to the one dimensional case by arguing component-wise. Then note that it suffices to chose $C=C(\gamma)>0$ such that $\gamma C^\gamma+\gamma-1\geq 0$ and to check that the resulting second derivative is non-negative.
\end{proof}
\begin{prop}\label{prop:local-lipschitz-stability}
Under Assumption \ref{ass:local-lipschitz}, strong existence, pathwise uniqueness and uniqueness in law hold for the DDSDE \eqref{eq:GenMckeanSDE} associated to $\mu^Y$. Moreover for any $q>p$ there exists a constant $C=C(\| h\|_{L^1_T}, \|g\|_{L^1_T}, \alpha, p, q)$ such that, for any two solutions $\mu^{X^1}$ and $\mu^{X^2}$ driven by the inputs $\mu^{Y^1}$ and $\mu^{Y^2}$ respectively, we have the stability estimate: 
\begin{equation}\label{eq:local lipschitz stability}
  d_p(\mu^{X^1},\mu^{X^2}) \leq C \bigg( \int_{C_T} \exp(C \|\omega \|_{C_T}^\alpha) \, (\mu^{Y^1}+\mu^{Y^2})(\dd \omega) \bigg)\,d_q(\mu^{Y^1},\mu^{Y^2}).
\end{equation}
\end{prop}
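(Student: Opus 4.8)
The plan is to obtain the qualitative statements from the general theory developed above and then to prove the quantitative bound \eqref{eq:local lipschitz stability}, which in turn yields uniqueness.

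\textbf{Well-posedness.} Part \ref{it:LocalLipLinGrowth} of Assumption \ref{ass:local-lipschitz} is precisely the linear growth of Assumption \ref{ass:GenLinearGrowth}, and \ref{it:LocalLip} makes $B$ continuous; hence Proposition \ref{prop:GenExistenceUnderGrowth} provides a weak solution together with the pathwise a priori bound $\|\tilde X\|_{C_T}\le C(1+\|\tilde Y\|_{C_T})$ of \eqref{eq:apriori-bound-exist}. For a frozen measure flow $\mu$ with bounded $p$-moments, the drift $(t,x)\mapsto B_t(x,\mu_t)$ is locally Lipschitz by \ref{it:LocalLip} and of linear growth by \ref{it:LocalLipLinGrowth}, so the associated continuous-noise ODE \eqref{eq:reference-SDE} is classically well-posed (strong existence, pathwise uniqueness and uniqueness in law, the solution being a measurable map of $Y$); Proposition \ref{prop:equivalence-weak-strong} then upgrades weak existence to strong existence. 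Pathwise uniqueness I would obtain by running the difference estimate below on a short interval $[0,\tau]$ on which the self-referential measure term can be absorbed, and then iterating across $[0,T]$; uniqueness in law follows from Proposition \ref{prop:baby-yamada-watanabe}.

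\textbf{Core estimate.} To prove \eqref{eq:local lipschitz stability} I realise both solutions on one space by taking $(Y^1,Y^2)$ to be an optimal coupling for $d_q(\mu^{Y^1},\mu^{Y^2})$ and letting $X^i$ be the strong solution driven by $Y^i$. Writing $D_t:=|X^1_t-X^2_t|$, $\delta_t:=d_p(\cL(X^1_t),\cL(X^2_t))$ and introducing the random growth weight $\Lambda_t:=1+|X^1_t|^\alpha+|X^2_t|^\alpha+\|\cL(X^1_t)\|_p^\alpha+\|\cL(X^2_t)\|_p^\alpha$, subtracting the two equations and using \ref{it:LocalLip} gives
\[\sup_{r\le t}D_r\le \|Y^1-Y^2\|_{C_T}+\int_0^t g_s\,\Lambda_s\,\Big(\sup_{r\le s}D_r+\delta_s\Big)\,\dd s .\]
By \eqref{eq:apriori-bound-exist} the weight is dominated pathwise, $\Lambda_t\le C(1+\|Y^1\|_{C_T}^\alpha+\|Y^2\|_{C_T}^\alpha)+C\sup_s(\|\cL(X^1_s)\|_p^\alpha+\|\cL(X^2_s)\|_p^\alpha)$, the last term being finite and deterministic. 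A pathwise Gr\"onwall produces the amplification $\exp(\int_0^T g_s\Lambda_s\,\dd s)\le \exp\!\big(C(1+\|Y^1\|_{C_T}^\alpha+\|Y^2\|_{C_T}^\alpha)\big)$; the crucial move is to take the $L^p_\Omega$ norm and, via H\"older with the conjugate exponents $q/p$ and $(q/p)'$, to separate the input difference — which by the choice of coupling becomes $\EE[\|Y^1-Y^2\|_{C_T}^q]^{1/q}=d_q(\mu^{Y^1},\mu^{Y^2})$ — from this exponential factor.

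\textbf{Exponential moments.} The exponential factor is then controlled through assumption \ref{it:LocalLipExpMoments}: its genuinely random part has finite expectation, $\EE[\exp(C\|Y^i\|_{C_T}^\alpha)]<\infty$, while the deterministic contributions $\exp(C\|\cL(X^i_s)\|_p^\alpha)=\exp(C\,\EE[|X^i_s|^p]^{\alpha/p})$ are turned into moments of exponentials using the convexity of $x\mapsto e^{(C+|x|)^{\alpha/p}}$ from Lemma \ref{lem:ConvexExponential} together with Jensen's inequality; this yields bounds of the form $\int_{C_T}\exp(C\|\omega\|_{C_T}^\alpha)\,\mu^{Y^i}(\dd\omega)$. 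Since each such integral is at least $1$, any power of it with exponent $\le 1$ (as produced by the H\"older exponent $(q/p)'$) is dominated by the integral itself, which is what produces the clean linear dependence displayed in \eqref{eq:local lipschitz stability}.

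\textbf{Main obstacle.} The delicate point is the coupling between the measure term $\delta_s$ and the unbounded weight $\Lambda_s$: one cannot merely bound $\delta_s\le d_p(\mu^{X^1},\mu^{X^2})$ and absorb, because the coefficient $\int g_s\Lambda_s\,\dd s$ of this self-referential term is large rather than small, so feeding it through the pathwise exponential would only give the unusable factor $\exp\!\big(\|g\|_{L^1_T}\,\EE[\Lambda^p e^{p\|g\|_{L^1_T}\Lambda}]^{1/p}\big)$. The route I would take is to confine the exponential amplification to the forcing coming from $\|Y^1-Y^2\|_{C_T}$ and to close the estimate through a second, deterministic Gr\"onwall for the marginals $s\mapsto \EE[D_s^p]$, in which the measure term enters with only a moderate coefficient: bounding $\delta_s\le \EE[D_s^p]^{1/p}$ and applying H\"older gives $\delta_s\,\EE[\Lambda_s D_s^{p-1}]\le \EE[\Lambda_s^p]^{1/p}\,\EE[D_s^p]$, where the polynomial moment $\EE[\Lambda_s^p]^{1/p}$ is finite and, after a further appeal to Lemma \ref{lem:ConvexExponential}, is reabsorbed into the same exponential-moment factor instead of being exponentiated. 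Carrying all constants through this two-layer scheme so that the final constant depends only on $\|h\|_{L^1_T},\|g\|_{L^1_T},\alpha,p,q$ is the most technical part of the argument.
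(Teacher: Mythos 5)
Your first three paragraphs match the paper's proof essentially step for step: weak existence from Proposition \ref{prop:GenExistenceUnderGrowth} and the a priori bound \eqref{eq:apriori-bound-exist}, strong existence via Proposition \ref{prop:equivalence-weak-strong}, the optimal $d_q$-coupling of the inputs (the paper couples arbitrary weak solutions via Remark \ref{rem:baby-yamada-watanabe}), the pathwise Gr\"onwall with the random weight $Z_t$ (your $\Lambda_t$), the H\"older step exploiting $q>p$, and the conversion $\exp(\|\mu^{Y^i}\|_p^\alpha)\lesssim \EE[\exp(\|Y^i\|^\alpha_{C_T})]$ via Lemma \ref{lem:ConvexExponential} and Jensen. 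The genuine gap is in your final paragraph. The quantity $\delta_s\,\EE[\Lambda_s D_s^{p-1}]$ can only arise from differentiating $t\mapsto \EE[D_t^p]$; but under the optimal coupling $Y^1\neq Y^2$, and $Y^1-Y^2$ is merely continuous, so $D_t$ is not absolutely continuous and this differentiation is precisely the forbidden move flagged in Remark \ref{rem:growth-vs-motononicity} (it would require making sense of $\dot Y^1-\dot Y^2$). If you instead work in integral form, the same manipulation produces the diagonal term $\EE[\Lambda_s D_s^p]$, which your H\"older step does not address and which cannot be closed at fixed moment order $p$ (H\"older only trades it for higher moments of $D$). And if, as your ``first layer'' suggests, you run the pathwise Gr\"onwall first, the measure term inherits the exponential weight: one gets
\begin{equation*}
  D_t \leq e^{\int_0^t g_r\Lambda_r \dd r}\,\|Y^1-Y^2\|_{C_T} + \int_0^t e^{\int_s^t g_r\Lambda_r \dd r}\, g_s\,\Lambda_s\,\delta_s\,\dd s,
\end{equation*}
so after taking $L^p_\Omega$-norms the coefficient of $\delta_s$ is of the form $\EE\big[e^{p\int g\Lambda}\Lambda_s^p\big]^{1/p}$, an exponential-moment constant, not the ``moderate'' polynomial moment $\EE[\Lambda_s^p]^{1/p}$ you claim; nothing justifies detaching the exponential weight from the measure term alone. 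As written, your second layer does not close the estimate.

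For comparison, the paper does not attempt any such refinement: after the pathwise Gr\"onwall \eqref{eq:bound w Z} (where $Z_s$ inside the forcing integral is absorbed into the exponential prefactor via $Z_s\leq \|Z\|_{C_T}\leq e^{\|Z\|_{C_T}}$) and the H\"older step, it simply bounds $d_p(\mu^1_s,\mu^2_s)\leq \EE[\sup_{r\leq s}D_r^p]^{1/p}$ and applies Gr\"onwall a second time with the prefactor $A:=\EE[\exp(C'\|Z\|_{C_T})]$ multiplying both the $d_q$-term and the integral term. Your instinct that this is delicate is not unfounded: read literally, that Gr\"onwall yields $A\,d_q\,e^{cA\|g\|_{L^1_T}}$, with $A$ appearing in the exponent, whereas the paper's displayed conclusion $e^{\|g\|_{L^1_T}}A\,d_q$ (and hence the clean linear dependence in \eqref{eq:local lipschitz stability}) tacitly drops $A$ from the exponent. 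But identifying a difficulty is not resolving it, and your proposed repair fails for the reasons above; so your argument for the stability estimate \eqref{eq:local lipschitz stability} is incomplete. Note that pathwise uniqueness is unaffected: taking $Y^1=Y^2$ one gets $\phi(t)\leq CA\int_0^t g_s\phi(s)\dd s$ with $A<\infty$, and Gr\"onwall (or your short-interval absorption-and-iteration) gives $\phi\equiv 0$ without any need to quantify the constant.
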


\begin{proof}
Assumption \ref{ass:local-lipschitz} implies that the hypothesis of Proposition \ref{prop:GenExistenceUnderGrowth} are met, thus weak existence holds. Moreover any solution $X$ satisfies the a priori estimate \eqref{eq:apriori-bound-exist},
namely $\| X\|_{C_T} \lesssim_h \| Y\|_{C_T}$.
%

Strong existence follows from weak existence and Proposition \ref{prop:equivalence-weak-strong}; by Proposition \ref{prop:baby-yamada-watanabe}, weak uniqueness and pathwise uniqueness are equivalent and will follow from the stability estimate.

Now let $(Y^1,Y^2)$ be an optimal coupling for $d_q(\mu^{Y^1},\mu^{Y^2})$; by Remark \ref{rem:baby-yamada-watanabe}, given any pair of weak solutions $\mu^{X^1},\mu^{X^2}$, we can construct an associated coupling $(X^1,X^2)$ and thus work with all variables defined on the same probability space. Let us set $\mu^i_t := \cL(X^i_t)$.

By point \ref{it:LocalLip} of Assumption \ref{ass:local-lipschitz}, the difference $X^1-X^2$ satisfies
\begin{align*}
|X^1_t-X^2_t|
& \leq |Y^1_t-Y^2_t|+ \int_0^t g_s\, (|X^1_s-X^2_s|+d_p(\mu^1_s,\mu^2_s))(1+|X^1_s|^\alpha+|X^2_s|^\alpha+ \| \mu^1_s\|_p^\alpha +\| \mu^2_s\|_p^\alpha)\, \dd s.
\end{align*}
Setting $Z_t := 1+|X^1_t|^\alpha+|X^2_t|^\alpha+ \| \mu^1_t\|_p^\alpha +\| \mu^2_t\|_p^\alpha$, it follows from the pathwise bound \eqref{eq:apriori-bound-exist} that
\begin{equation}\label{eq:proof-a-priori}
  \| Z\|_{C_T} \lesssim_{h,\alpha} 1+ \| Y^1\|_{C_T}^\alpha + \| Y^2\|_{C_T}^\alpha + \| \mu^{Y^1}\|_p^\alpha + \| \mu^{Y^2}\|_p^\alpha\quad \PP\text{-a.s.}; 
\end{equation}
moreover by Gr\"onwall, for any $t\in [0,T]$ we have the $\PP$-a.s. pathwise estimate
\begin{equation}\label{eq:bound w Z}
\begin{aligned}
\sup_{r\leq t}|X^1_r-X^2_r|
& \leq \exp\bigg(\int_0^t g_s Z_s \dd s\bigg)\left(\|Y^1-Y^2\|_{C_T}+ \int_0^t g_s\, d_p(\mu^1_s,\mu^2_s)\, Z_s \dd s\right)\\
& \lesssim \exp\Big((1+\| g\|_{L^1_T}) \| Z\|_{C_T}\Big) \left(\|Y^1-Y^2\|_{C_T}+\int_0^t g_s\, d_p(\mu^1_s,\mu^2_s)\,\dd s\right)
\end{aligned}
\end{equation}
%
Now taking the $L^p_\Omega$-norm on both sides, using an H\"older inequality on the r.h.s. (using the fact that $q>p$), we can find another constant $C'=C'(\| g\|_{L^1_T},p,q)$ such that
\begin{align*}
  \EE\Big[\,\sup_{r\leq t} |X^1_r - X^2_r|^p\Big]^{1/p} 
  & \lesssim \EE[\exp(C' \| Z\|_{C_T})]\, \left(\EE[\| Y^1-Y^2\|_{C_T}^q]^{1/q} +\int_0^t g_s\, d_p(\mu^1_s,\mu^2_s)\,\dd s\right)\\
  & \lesssim \EE[\exp(C' \| Z\|_{C_T})]\, \left(d_q(\mu^{Y^1},\mu^{Y^2}) +\int_0^t g_s\, \EE\Big[\,\sup_{r\leq s} |X^1_r - X^2_r|^p\Big]^{1/p}\,\dd s\right).
\end{align*}
Applying Gr\"onwall's lemma again, we deduce that
\begin{align*}
  d_p(\mu^{X^1},\mu^{X^2}) \leq \EE[\| X^1-X^2\|_{C_T}^p]^{1/p} \lesssim e^{\| g\|_{L^1_T}}\,\EE[\exp(C' \| Z\|_{C_T})]\, d_q(\mu^{Y^1},\mu^{Y^2}).
\end{align*}

Now consider the quantities $\exp(\|\mu^{Y^i}\|^\alpha_{p}) = \exp(\EE[\|Y^i\|^{p}_{C_T}]^{\alpha/p})$ for $i=1,\,2$. In the case $\alpha >p$ one can directly apply Jensen to give that $\exp(\EE[\|Y^i\|^{p}_{C_T}]^{\alpha/p}) \leq \EE [\exp(\|Y^i\|^\alpha_{C_T})]$. For $\alpha<p$, we first apply Lemma \ref{lem:ConvexExponential} with $\gamma = \alpha/p$, followed by Jensen to give, for some $C_\gamma>0$,
\begin{align*}
  \exp\left(\EE[\|Y^i\|^p_{C_T}]^{\alpha/p}\right) \leq \exp\left((C_\gamma+\EE[\|Y^i\|^{p}_{C_T}])^{\alpha/p}\right) &\leq \EE\left[\exp((C_\gamma+\|Y^i\|^p_{C_T})^{\alpha/p})\right]\\
  &\lesssim_{\alpha,p} \EE\left[\exp(\|Y^i\|^\alpha_{C_T})\right].
\end{align*}
In either case we have $\exp(\|\mu^{Y^i}\|^\alpha_{p}) \lesssim_{\alpha,p} \EE[\exp(\|Y^i\|^\alpha_{C_T})]$ and so combined with \eqref{eq:proof-a-priori} we finally obtain the stability estimate \eqref{eq:local lipschitz stability} for some new constant $C$.

As the estimate holds for any possible pair of solutions $X^1,X^2$, taking $Y^1=Y^2$ we deduce pathwise uniqueness and this completes the proof.
\end{proof}

\begin{rem}
It's clear from the proof above that Assumption \ref{ass:local-lipschitz} admits other variants, for instance we could instead require, \ref{it:LocalLipLinGrowth}, \ref{it:LocalLipExpMoments} and
\begin{enumerate}[label=\roman*$^\prime$.] \setcounter{enumi}{1}
\item $|B_t(x,\mu)-B_t(y,\nu)| \leq g_t (|x-y|+d_p(\mu,\nu)) (1+|x|^\alpha+|y|^\alpha+\langle e^{c\,|\cdot|^\alpha}, \mu\rangle +\langle e^{c\,|\cdot|^\alpha}, \nu \rangle)$
\end{enumerate}
for some $c>0$. The only difference is that in this case existence of solutions doesn't follow from a straightforward application of Proposition \ref{prop:GenExistenceUnderGrowth}. However, it's easy to check that, due to the available a priori estimates, the same compactness argument can be readapted in this setting.
\end{rem}

The next example provides concrete choices of $B$ satisfying Assumption \ref{ass:local-lipschitz}.

\begin{ex}\label{ex:growth}
If $Y=\xi+W$, where $\xi$ has Gaussian tails and $W$ is a continuous Gaussian process (e.g. fractional Brownian motion), then condition \ref{it:LocalLipExpMoments} is satisfied for any $\alpha\in [0,2)$.

Suppose $b:[0,T]\times \RR^d\times\RR^d\to \RR^d$ satisfies for some $\alpha>0$
\begin{align*}
  |b_t(x,y)|& \leq g_t(1+|x|+|y|),
  \\
  |b_t(x,x')-b_t(y,y')|&\leq g_t (|x-y|+|x'-y'|)(1+|x|^\alpha + |y|^\alpha +|x'|^\alpha +|y'|^\alpha),
\end{align*}
for some integrable function $g$. Then a few elementary computations reveal that 
\[
B_t(x,\mu):= \int_{\RR^d} b_t(x,x') \mu(\dd x')
\]
satisfies conditions \ref{it:LocalLipLinGrowth},\,\ref{it:LocalLip} for $p=\alpha+1$. Indeed by H\"older's and Minkowski's inequalities
\begin{align*}
  |B_t(x,\mu)-B_t(y,\nu)|
  &\leq \int_{\RR^{2d}}|b_t(x,x')-b_t(y,y')|m(\dd x',\dd y') 
  \\
  &\leq g_t \bigg( |x-y| +\Big(\int_{\RR^{2d}} |x'-y'|^p \, m(\dd x',\dd y')\Big)^{1/p}\bigg) \times\\
  & \quad \times \bigg( 1+ |x|^\alpha +|y|^\alpha + \Big(\int_{\RR^{2d}} (|x'|^\alpha +|y'|^\alpha)^{p'} \, m(\dd x',\dd y')\Big)^{1/p'} \bigg)\\
  &\leq g_t(|x-y|+d_p(\mu,\nu))(1+|x|^\alpha+|y|^\alpha+\|\mu\|_{p'\alpha}^\alpha+\|\nu\|_{p'\alpha}^\alpha)
\end{align*}
once we take $m$ to be an optimal coupling for $d_p(\mu,\nu)$; choosing $p=\alpha+1$, so that $p'\alpha=p$, we get the desired estimate for \ref{it:LocalLip} and condition \ref{it:LocalLipLinGrowth} can be checked similarly. 

The above assumption on $b$ is satisfied for instance if $D b$ satisfies a growth condition of the form $|D b_t (x,x')|\lesssim g_t (1+|x|^\alpha+|x'|^\alpha)$, for all $(x,x')\in \RR^{2d}$.
\end{ex}
\section{DDSDEs with convolutional structure}\label{sec:convol}
In this section we will focus on a particular subcase of \eqref{eq:GenMckeanSDE}, given by drifts with the convolutional structure $B_t(x,\mu)=(b_t\ast \mu)(x)$. Contrary to the previous sections, we now impose the noise term $Y$ to be of the form $Y_t=\xi + W_t$ where $\xi$ and $W$ are independent random variables taking respectively values in $\RR^d$ and $C_T$; the reason for this, which will become clearer later on, is to exploit the transport structure to show that some integrability features of $\cL(\xi)$ are propagated at positive times. Still, no assumption on $W$ are imposed apart from its continuity.

The DDSDE in consideration therefore becomes
\begin{equation}\label{eq:ConvMckeanSDE}
X_t
= \xi + \int_0^t (b_s\ast\mathcal{L}(X_s))(X_s)\,\mathd s+W_t
= \xi + \int_0^t \int_{\RR^d} b_s(X_s - y) \,\mathcal{L}(X_s)(\dd y) + W_t
\end{equation}
for some measurable $b:[0,T]\times\RR^d\to\RR^d$.

The convolutional structure arises naturally in the study of particle systems with pairwise interactions; it also allows to discuss kernels $b$ with poor spatial regularity, possibly unbounded and merely integrable.

Let us start by providing a suitable notion of solution to equation~\eqref{eq:ConvMckeanSDE}; as usual, we work for simplicity on a finite time interval $[0,T]$.

\begin{defn}[Weak Solutions II]\label{def:WeakSolsII}
Let $b:\RR_+\times \RR^d \rightarrow \RR^d$ be a measurable map. We say that a tuple $(\Omega,\cF,\PP;X,W,\xi)$, consisting of a probability space $(\Omega,\cF,\PP)$ and a measurable map $(X,W,\xi):\Omega \rightarrow C_T\times C_T\times \RR^d$ is a weak solution to \eqref{eq:ConvMckeanSDE} on $[0,T]$ if:
\begin{enumerate}
  \item For all $t>0$, $\cL(X_t)=\PP \circ X^{-1}_t=:\mu_t$ is absolutely continuous  w.r.t. Lebesgue.
  \item There exists a measurable representative of $(t,x)\mapsto |b_t|\ast \cL(X_t)(x)$ such that
  \begin{equation*}
    \int_0^T |b_s|\ast \cL(X_s)(X_s) \dd s <\infty,\quad \PP\text{-a.s..}
  \end{equation*}
  \item The relation \eqref{eq:ConvMckeanSDE} holds $\PP$-a.s., the integral being interpreted in the Lebesgue sense.
\end{enumerate}
\end{defn}
\begin{rem}\label{rem:MeasruableRep}
Definition \ref{def:WeakSolsII} is insensitive to the choice of measurable representative for $b_t\ast\cL(X_s)$ appearing in $(2)$. Consider any two measurable maps $f, \tilde{f} :[0,T]\times \RR^d\rightarrow \RR^d$ such that $f(t,x)=\tilde{f}(t,x)$ for Lebesgue a.e. $(t,x)$, then by condition $(1)$ and Fubini we have
\begin{align*}
  \EE\Big[ \int_0^T |f(s,X_s)-\tilde{f}(s,X_s)|\mathd t\Big] =\int_0^T \int_{\RR^d} |f(s,x)-\tilde{f}(s,x)| \,\cL(X_s) (\dd x) \dd s =0.
\end{align*}
As a consequence, if $\PP$-a.s. $\int_0^T |f(s,X_s)|\dd s<\infty$, then the same holds for $\tilde{f}$ and one has that $\int_0^\cdot f(s,X_s)\dd s= \int_0^\cdot \tilde f(s,X_s)\dd s$ as $C_T$-valued random variables. A similar argument shows that the definition is insensitive to modifications of the kernel $b$ on Lebesgue negligible sets; for this reason we can consider kernels belonging to equivalence classes like $L^1_T L^p_x$.
\end{rem}
In order for the requirement $(2)$ in Definition \ref{def:WeakSolsII} to be met, it is desirable to have some information on the integrability properties of $\cL(X_t)$. To derive it, we need to recall known facts from classical ODE and transport PDE theory.

Given $x_0\in \RR^d$, a drift $\bar{b}:[0,T]\times\RR^d\rightarrow \RR^d$ with continuous and bounded first derivative and a continuous path $\omega\in C_T$, there exists a unique solution $x\in C_T$ to the Cauchy problem
\begin{equation}\label{eq:RandomODE}
x_t = x_0 + \int_0^t \bar{b}(s,x_s)\mathd s + \omega_t \quad\forall\, t\in [0,T];
\end{equation}
similarly to Section \ref{sec:LipWellPosed}, we can define the solution map $x_t=\Phi(t,x_0;\omega)$ associated to the drift $\bar{b}$, which is a continuous map $\Phi:[0,T]\times \RR^d\times C_T\to \RR^d$. For fixed $(t,\omega)$, the map $x_0\mapsto \Phi(t,x_0;\omega)$ is a diffeomorphism of $\RR^d$, with Jacobian given by the formula
\begin{align*}
\det D \Phi(t,x_0;\omega)
= \exp \left(\int_0^t \div \bar{b}(s, \Phi(s,x_0;\omega))\mathd s \right).
\end{align*}
In particular we have the following two-sided estimate independent of $\omega$:
\begin{equation}\label{eq:PhiIncompressBound}
  0<\exp\big(-\| \div \bar{b}\|_{L^1_T L^\infty_x}\big) \leq \det D \Phi(t,x_0;\omega) \leq \exp\big(\| \div \bar{b}\|_{L^1_T L^\infty_x}\big)<\infty;
\end{equation}
similarly for $\det D\Psi(t,x_0;\omega)$, where $\Psi(t,\,\cdot\,;\omega)$ denotes the inverse of $x_0\mapsto \Phi(t,x_0;\omega)$.

On the other hand, we can consider the solution map as a continuous (thus measurable) map $(x_0,\omega)\mapsto \Phi(\,\cdot\,,x_0;\omega)$ from $\RR^d\times C_T$ into $C_T$; in particular if we now consider a random variable $(\xi,W)$ defined on a probability space $(\Omega,\cF,\PP)$, we consider the associated variable $\Phi(\,\cdot\,,\xi;W)$, which amounts to the solution of the random ODE problem \eqref{eq:RandomODE}.

With this considerations in mind, we can prove the following preliminary a priori estimate.

\begin{prop}\label{prop:LawLpApriori}
Let $b\in L^1_T C^1_b$, $p\in [1,\infty]$ and $(\xi,W)$ be independent input data s.t. $\cL(\xi)(\dd x)=\rho(x)\dd x$ with $\rho\in L^p_x$ for $p\in [1,\infty]$ and let $X$ denote the unique solution to \eqref{eq:ConvMckeanSDE}.
Then there exists a constant $C=C(p,\Vert \div b\Vert_{L^1_T L^\infty_x})$ such that
\begin{equation}\label{eq:LipKernelApriori}
\sup_{t\in [0,T]} \Vert \mathcal{L}(X_t)\Vert_{L^p_x} \leq C \| \rho\|_{L^p_x}
\end{equation}\\
Furthermore, only using the assumption that $\rho$ is a probability density, for any $\eps>0$ there exists a $\delta>0$, only depending on $\rho$ and $\| \div b\|_{L^1_T L^\infty_x} $, s.t.
\begin{equation}\label{eq:apriori-equiint}
  \sup_{t\in [0,T]} \cL(X_t)(A) \leq \eps \quad \forall\, A\subset \RR^d,\, |A|\leq \delta.
\end{equation}
\end{prop}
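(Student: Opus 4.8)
The plan is to exploit the fact that, by uniqueness of the random ODE \eqref{eq:RandomODE} with frozen drift $\bar b(s,x):=(b_s\ast \cL(X_s))(x)$, the solution may be represented as $X_t=\Phi(t,\xi;W)$. Since $\xi$ and $W$ are independent and $\cL(\xi)=\rho\,\dd x$, conditioning on $W=\omega$ shows that the conditional law of $X_t$ is the push-forward $\Phi(t,\,\cdot\,;\omega)_\#(\rho\,\dd x)$; averaging over $W$ then gives that $\cL(X_t)$ has density
\[
\mu_t(y)=\int_{C_T}\rho\big(\Psi(t,y;\omega)\big)\,\big|\det D\Psi(t,y;\omega)\big|\,\cL(W)(\dd \omega),
\]
obtained by the change of variables $y=\Phi(t,x_0;\omega)$ inside the push-forward. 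Note first that $\div\bar b_s=(\div b_s)\ast\cL(X_s)$, so that $\|\div\bar b\|_{L^1_T L^\infty_x}\le\|\div b\|_{L^1_T L^\infty_x}=:C_0$; hence the two-sided Jacobian bound \eqref{eq:PhiIncompressBound} holds with this $C_0$ uniformly in $\omega$ and $t$, and this is the source of all the constants below.

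For the bound \eqref{eq:LipKernelApriori} I would apply Minkowski's integral inequality to the above representation to obtain
\[
\|\mu_t\|_{L^p_x}\le \int_{C_T}\big\|\rho(\Psi(t,\,\cdot\,;\omega))\,|\det D\Psi(t,\,\cdot\,;\omega)|\big\|_{L^p_x}\,\cL(W)(\dd\omega),
\]
and then estimate the inner norm for fixed $\omega$ by the change of variables $x_0=\Psi(t,y;\omega)$. This turns the integral into $\int |\rho(x_0)|^p\,|\det D\Phi(t,x_0;\omega)|^{1-p}\,\dd x_0$, and using $|\det D\Phi|\ge e^{-C_0}$ together with $1-p\le 0$ bounds it by $e^{C_0(p-1)}\|\rho\|_{L^p_x}^p$, uniformly in $\omega$ and $t$ (the case $p=\infty$ being handled analogously, or by passing to the limit $p\to\infty$). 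Since $\cL(W)$ is a probability measure, integrating over $\omega$ yields \eqref{eq:LipKernelApriori} with $C=e^{C_0(p-1)/p}$, depending only on $p$ and $\|\div b\|_{L^1_T L^\infty_x}$.

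For the equi-integrability estimate \eqref{eq:apriori-equiint} I would integrate the density over a set $A$ and use Fubini together with the same change of variables $x_0=\Psi(t,y;\omega)$, which cancels the two Jacobian factors, to write
\[
\cL(X_t)(A)=\int_{C_T}\bigg(\int_{\Psi(t,A;\omega)}\rho(x_0)\,\dd x_0\bigg)\,\cL(W)(\dd\omega).
\]
The key point is the uniform volume-distortion bound $|\Psi(t,A;\omega)|=\int_A|\det D\Psi(t,y;\omega)|\,\dd y\le e^{C_0}|A|$, valid for all $\omega$ and $t$. Since $\rho\in L^1_x$, the measure $\rho\,\dd x$ is absolutely continuous with respect to Lebesgue measure, so given $\eps>0$ there is $\delta'=\delta'(\eps,\rho)$ with $\int_E\rho\,\dd x\le\eps$ whenever $|E|\le\delta'$. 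Choosing $\delta:=e^{-C_0}\delta'$ then guarantees $|\Psi(t,A;\omega)|\le\delta'$ whenever $|A|\le\delta$, so the inner integral is $\le\eps$ for every $\omega$, and the outer average is $\le\eps$ as well, uniformly in $t$.

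The main obstacle I anticipate is not analytic but bookkeeping: correctly justifying the conditional push-forward representation of $\cL(X_t)$ from the independence of $(\xi,W)$, and carefully tracking the Jacobian factors through the two changes of variables so that the exponents of $\det D\Phi$ combine into the clean constants. Once the representation $\mu_t=\int_{C_T}\rho(\Psi)\,|\det D\Psi|\,\cL(W)(\dd\omega)$ and the uniform bound \eqref{eq:PhiIncompressBound} are in place, both estimates reduce to Minkowski's inequality and to the absolute continuity of $\rho\,\dd x$, respectively.
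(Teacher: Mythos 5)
Your proposal is correct and follows essentially the same route as the paper: representing $X_t=\Phi(t,\xi;W)$ via the frozen drift $\bar b_t=b_t\ast\cL(X_t)$, deriving the averaged density formula $\rho_t(y)=\int_{C_T}\rho(\Psi(t,y;\omega))\det D\Psi(t,y;\omega)\,\cL(W)(\dd\omega)$, and exploiting the uniform two-sided Jacobian bound \eqref{eq:PhiIncompressBound} through changes of variables. The only cosmetic difference is that you use Minkowski's integral inequality where the paper uses Jensen's inequality for the $L^p$ estimate (yielding a marginally sharper constant $e^{C_0(p-1)/p}$), and your equi-integrability argument, with the exact cancellation of Jacobians, matches the paper's.
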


\begin{proof}
Let $X$ be the aforementioned solution, which exists by the results of Section \ref{sec:LipWellPosed}. Then $X$ is also a solution to the random ODE \eqref{eq:RandomODE} for the choice $\bar{b}_t(x) = b_t\ast \cL(X_t)(x)$ and we can represent it as $X_t = \Phi(t,\xi;W)$; without loss of generality we can take $(\xi,W)$ to be the canonical variables on the probability space $(\RR^d\times C_T, \cB(\RR^d\times C_T),(\rho \dd x)\otimes \mu)$ where $\mu=\cL(W)$. Moreover it holds that
\[
\| \div \bar{b}\|_{L^1_T L^\infty_x}
= \| (\div b_\cdot) \ast \cL(X_\cdot)\|_{L^1_T L^\infty_x}
\leq \| \div b \|_{L^1_T L^\infty_x}.
\]
For any measurable and bounded $f:\RR^d\to \RR$ and any $t\in [0,T]$ we have
\begin{align*}
\langle f, \cL(X_t)\rangle
& =\EE[f(\Phi(t,\xi;W))]
= \int_{C_T} \int_{\RR^d} f(\Phi(t,x;\omega)) \rho(x)\mathd x\, \mu(\dd \omega)\\
& = \int_{C_T} \int_{\RR^d} f(y) \rho(\Psi(t,x;\omega)) \det D\Psi(t,y;\omega) \dd y\, \mu( \dd \omega)\\
& = \int_{\RR^d} f(y) \bigg[ \int_{C_T} \rho(\Psi(t,y;\omega)) \det D\Psi(t,y;\omega) \, \mu( \dd \omega)\bigg]\, \dd y.
\end{align*}
As a consequence, $\cL(X_t)$ is absolutely continuous  w.r.t. Lebesgue with density given by
\[
\rho_t(x) := \int_{C_T} \rho(\Psi(t,y;\omega)) \det D\Psi(t,y;\omega) \, \mu( \dd \omega).
\]
Given the bounds on the Jacobian of $\Phi$ and $\Psi$ as above, for $p\in [1,\infty)$ we can estimate $\| \rho_t\|_{L^p}$ by
\begin{align*}
  \int_{\RR^d} |\rho_t(x)|^p \dd x
  & \leq \int_{\RR^d} \int_{C_T} |\rho(\Psi(t,x;\omega))|^p |\det D\Psi(t,y;\omega)|^p \, \mu( \dd \omega)\\
  & \lesssim_{\div b} \int_{C_T} \int_{\RR^d} |\rho(\Psi(t,x;\omega))|^p \dd x \, \mu( \dd \omega)\\
  & = \int_{C_T} \int_{\RR^d} |\rho(y)|^p \det D\Phi(t,y;\omega) \dd y \, \mu( \dd \omega)
  \lesssim_{\div b} \int_{\RR^d} |\rho(y)|^p \dd y;
\end{align*}
the estimate for $p=\infty$ is similar.
To prove the last statement, applying several times the fact that the incompressibility constants of $\Phi(t,\cdot;\omega)$ are controlled by $\exp(\|\div b\|_{L^1_TL^\infty_x})$ uniformly in $\omega\in C_T$ for any $t\in [0,T]$ and $A\subset\RR^d$, see see \eqref{eq:PhiIncompressBound}, we have that
\begin{align*}
  \cL(X_t)(A)
  \lesssim_h \int_A \int_{C_T} \rho(\Psi(t,y;\omega) \, \mu( \dd \omega) \dd y
  \lesssim_h \int_{C_T} \int \rho(x) \mathds{1}_{\psi(t,A;\omega)}(x) \dd x \, \mu( \dd \omega);
\end{align*}
the conclusion follows by observing that $\rho\in L^1_x$ is uniformly integrable and that $|\psi(t,A;\omega)|\lesssim_h |A|$ uniformly in $t,\omega$.
\end{proof}

With the a priori bound \eqref{eq:LipKernelApriori} at hand, we are now ready to prove existence of solutions to \eqref{eq:ConvMckeanSDE} for integrable drifts $b$, up to the price of requiring additional assumptions on $\div b$ and $\rho$.

\begin{prop}\label{prop:PathwiseExistence2}
Suppose $b \in L^1_T L^q_x$, $\div b \in L^1_T L^\infty_x$ and $\cL(\xi)(\dd x):=\rho(x)\dd x$ for a probability density $ \rho \in L^p_x$ with $p,q\in [1,\infty]$ such that
\begin{equation}\label{eq:exist-coeff-pq}
  \frac{1}{2q}+\frac{1}{p}\leq 1.
\end{equation}
Then there exists at least one weak solution $X_t$ to \eqref{eq:ConvMckeanSDE} in the sense of Definition \ref{def:WeakSolsII}, which moreover satisfies $\sup_{t\in [0,T]} \|\cL(X_t)\|_{L^p_x} <\infty$.
\end{prop}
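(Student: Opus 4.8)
The plan is to construct a solution by smoothing the kernel, solving the resulting Lipschitz DDSDEs via Proposition \ref{prop:MKV basic result}, and passing to the limit by a compactness argument, with the decisive a priori information supplied by Proposition \ref{prop:LawLpApriori}. Concretely, I would mollify in space, setting $b^n_t := b_t \ast \varphi_n$ for a standard mollifier $\varphi_n$. For each fixed $n$ one has $b^n \in L^1_T C^1_b$, while $\|b^n_t\|_{L^q_x}\le\|b_t\|_{L^q_x}$ and $\|\div b^n\|_{L^1_T L^\infty_x}\le\|\div b\|_{L^1_T L^\infty_x}$ (since $\div b^n_t=(\div b_t)\ast\varphi_n$), and $b^n\to b$ in $L^1_T L^q_x$. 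The drift $B^n_t(x,\mu)=(b^n_t\ast\mu)(x)$ satisfies \eqref{eq:ass basic1} with $n$-dependent constants, so Proposition \ref{prop:MKV basic result} furnishes a unique strong solution $X^n$ of \eqref{eq:ConvMckeanSDE} with kernel $b^n$ and input $Y=\xi+W$. The point of this smoothing is that Proposition \ref{prop:LawLpApriori} now applies and gives, \emph{uniformly in $n$}, both the bound $\sup_{t}\|\cL(X^n_t)\|_{L^p_x}\le C\|\rho\|_{L^p_x}$ of \eqref{eq:LipKernelApriori} and the uniform equi-integrability \eqref{eq:apriori-equiint}; here it is essential that the constants depend only on $\|\div b\|_{L^1_T L^\infty_x}$ and $\rho$.

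The analytic core is a uniform bound on the drift along the process. Writing $V^n_s:=(b^n_s\ast\cL(X^n_s))(X^n_s)$ and combining the uniform $L^p_x$ bound with the symmetrised Young inequality,
\[
\EE[|V^n_s|]=\int_{\RR^d}\big|(b^n_s\ast\cL(X^n_s))(x)\big|\,\cL(X^n_s)(\dd x)\le \|b^n_s\|_{L^q_x}\,\|\cL(X^n_s)\|_{L^s_x}^2,\qquad \tfrac{1}{s}=1-\tfrac{1}{2q},
\]
I would exploit exactly condition \eqref{eq:exist-coeff-pq}, which is equivalent to $s\le p$ and hence guarantees, by interpolation between $L^1_x$ and $L^p_x$, that $\|\cL(X^n_s)\|_{L^s_x}$ is finite and uniformly bounded. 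Thus $\EE[|V^n_s|]\le m(s):=C\|b_s\|_{L^q_x}$ with $m\in L^1_T$ independent of $n$, so that $\sup_n\EE[\int_0^T|V^n_s|\,\dd s]<\infty$. The same computation shows $b_s\ast\cL(X_s)\in L^{p'}_x$, which is what makes requirement $(2)$ of Definition \ref{def:WeakSolsII} meaningful in the limit.

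For tightness, since $X^n=Y+Z^n$ with $Z^n_\cdot=\int_0^\cdot V^n_s\,\dd s$, $Z^n_0=0$, and $Y$ is a fixed random element, it suffices that $\{\cL(Z^n)\}$ be tight in $C_T$. Using the pathwise bound $\sup_{|t-s|\le\delta}|Z^n_t-Z^n_s|\le M\delta+\int_0^T|V^n_s|\,\mathds{1}_{\{|V^n_s|>M\}}\,\dd s$ and Markov's inequality, tightness reduces to uniform integrability of $\{V^n\}$ in $L^1([0,T]\times\Omega)$. When \eqref{eq:exist-coeff-pq} holds strictly one has $s<p$, and uniform integrability is immediate from the uniform $L^p_x$ bound through a Hölder gain of order $|E|^{1-s/p}$. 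I expect the borderline case $\tfrac1{2q}+\tfrac1p=1$ to be the main obstacle: there the exponents are saturated and this gain disappears, so one must instead invoke the uniform equi-integrability \eqref{eq:apriori-equiint} of the laws to upgrade the pointwise estimate $\EE[|V^n_s|]\le m(s)$ into genuine uniform integrability. Controlling the singular drift along the paths precisely at this endpoint is the crux of the argument.

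Finally, Prokhorov and Skorokhod yield, along a subsequence, representatives $(\tilde X^n,\tilde Y^n)\to(\tilde X,\tilde Y)$ converging a.s. in $C_T\times C_T$ with matching laws; $\tilde Y$ has law $\cL(\xi+W)$, and lower semicontinuity of the $L^p_x$ norm under weak convergence transfers the bound \eqref{eq:LipKernelApriori} to $\cL(\tilde X_t)$, giving absolute continuity and point $(1)$ of Definition \ref{def:WeakSolsII}. It then remains to pass to the limit in $\int_0^\cdot(b^n_s\ast\cL(\tilde X^n_s))(\tilde X^n_s)\,\dd s$, which I would split into a kernel error ($b^n\to b$ in $L^1_T L^q_x$), a measure error ($\cL(\tilde X^n_s)\to\cL(\tilde X_s)$), and a path error. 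The last is delicate because $b_s\ast\cL(\tilde X_s)$ is merely $L^{p'}_x$ and not continuous; I would handle it by approximating $b_s\ast\cL(\tilde X_s)$ with continuous functions and using the duality bound $\EE[|f(\tilde X^n_s)|]\le\|f\|_{L^{p'}_x}\|\cL(\tilde X^n_s)\|_{L^p_x}$ to control the approximation uniformly in $n$, after which Vitali's theorem (justified by the uniform integrability established above) gives convergence of the time integrals and identifies $(\tilde\Omega,\tilde\cF,\tilde\PP;\tilde X,\tilde W,\tilde\xi)$ as the desired weak solution.
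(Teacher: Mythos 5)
Your overall strategy (mollify $b$, solve via Proposition \ref{prop:MKV basic result}, extract uniform bounds from Proposition \ref{prop:LawLpApriori}, compactness, Skorokhod, limit passage) matches the paper's, and most steps are sound, but there is a genuine gap exactly where you locate the crux, and the tool you propose cannot close it. Your route to tightness goes through $C_T$, hence requires uniform integrability of $V^n_s=(b^n_s\ast\cL(X^n_s))(X^n_s)$ on $[0,T]\times\Omega$; the bound $\EE[|V^n_s|]\le m(s)$ with $m\in L^1_T$ alone does not give this (take $V^n_s=m(s)W^n$ with $W^n=n\,\mathds{1}_{A_n}$, $\PP(A_n)=1/n$). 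In the borderline case $\tfrac{1}{2q}+\tfrac{1}{p}=1$ --- which is the general case: one cannot reduce to strict inequality, since lowering $p$ or $q$ only worsens \eqref{eq:exist-coeff-pq}, and indeed the paper reduces \emph{to} equality --- you propose to invoke \eqref{eq:apriori-equiint}. That estimate gives $L^1$-equi-integrability of the laws, i.e. $\mu^n_s(A)\le\eps$ for $|A|\le\delta$; but the natural H\"older bound at the endpoint,
\[
\EE\big[|V^n_s|\,\mathds{1}_{\{|V^n_s|>M\}}\big]\le \big\| f^n_s\big\|_{L^{p'}_x}\,\big\|\mu^n_s\,\mathds{1}_{\{f^n_s>M\}}\big\|_{L^p_x},\qquad f^n_s:=|b^n_s|\ast\cL(X^n_s),
\]
requires smallness of the $L^p_x$ norm of the density restricted to a set of small Lebesgue measure, i.e. $p$-equi-integrability of the densities; small mass does not control this (a thin, tall spike of unit $L^p_x$ norm and tiny mass is a counterexample). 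So the decisive step remains unproven and the cited estimate cannot prove it. It is fixable in two ways: either truncate the kernel, writing $|b^n_s|\le K+(|b_s|\,\mathds{1}_{\{|b_s|>K\}})\ast\varphi_n$, whence $\EE[|V^n_s|\,\mathds{1}_{\{|V^n_s|>M\}}]\le K\,\PP(|V^n_s|>M)+C\,\||b_s|\,\mathds{1}_{\{|b_s|>K\}}\|_{L^q_x}$, and conclude by Markov plus dominated convergence in time (this uses only the uniform $L^p_x$ bound); or, as the paper does, abandon $C_T$-tightness altogether: the paper only bounds $\EE[\|Z^n\|_{W^{1,1}_T}]$ (mere $L^1$ boundedness of $V^n$, no uniform integrability needed) and uses Helly's selection theorem together with the compact embedding of $W^{1,1}_T$ into $L^r_T$ to get tightness of $(\xi,W,Z^n)$ in $\RR^d\times C_T\times L^r_T$ plus pointwise convergence, which suffices for the limit passage.

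A second, smaller gap: the statement allows $p=1$, which by \eqref{eq:exist-coeff-pq} forces $q=\infty$, and there two of your steps fail. Mollifications do not converge in $L^1_T L^\infty_x$ (only a.e.), and boundedness in $L^1_x$ together with weak convergence of $\cL(\tilde X^n_t)$ does not yield absolute continuity of the limit law, let alone lower semicontinuity of the $L^1_x$ norm (densities can concentrate to a Dirac). This is where \eqref{eq:apriori-equiint} is genuinely needed: it gives equi-integrability of $\{\cL(\tilde X^n_t)\}_n$, hence weak $L^1_x$ convergence of the densities by Dunford--Pettis, and the drift integrals are then passed to the limit using the a.e. convergence $b^n\to b$ and Lemma \ref{lem:weak-conv-L1}, which is exactly how the paper treats this case.
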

\begin{proof}
The proof is based on classical compactness arguments; since $\rho\in L^1_x\cap L^p_x$, by interpolation w.l.o.g. we can assume equality holds in \eqref{eq:exist-coeff-pq}.

Let $\{b^n\}_n$ be a sequence in $L^1_T C^1_b$ satisfying 
$\|b^n\|_{L^1_T L^q_x}\leq \|b\|_{L^1_T L^q_x}$, $\|\div b^n\|_{L^1_T L^\infty_x}\leq \|\div b\|_{L^1_T L^\infty_x}$ (take for instance $b^n_s := b_s \ast \varphi_{1/n}$ with $\{\varphi_\eps\}_{\eps>0}$ standard mollifiers); by Section \ref{sec:LipWellPosed}, for each $n\geq 1$ there exists a unique strong solution $(X^n,\cL(X^n)) \in C_T\times \cP(C_T)$ to the DDSDE associated to $b^n$, with input data $(\xi,W)$. By Proposition \ref{prop:LawLpApriori}, we can find some $C=C(\|\div b\|_{L^1_T L^\infty_x})$ such that
\begin{equation*}
  \sup_{t \in [0,T]} \|\cL(X^n_t)\|_{L^p} \leq C \| \rho\|_{L^p_x} \quad \forall\, n\in\NN.
\end{equation*}

Set $Z_t^n := \int_0^{t} b^n_s\ast \cL(X^n_s)(X^n_s)\,\dd s = X^n_t-\xi-W_t$. Our first goal is to establish uniform bounds on $\{Z_n\}_n$. It holds that
\begin{align*}
  \EE[\| Z^n\|_{W^{1,1}_T}]
  & \leq \EE\bigg[\int_0^T |b_s^n|\ast \cL(X^n_s)(X^n_s)\, \dd s\bigg]\\
  & \leq \int_0^T \langle |b^n_s|\ast \cL(X^n_s),\cL(X^n_s)\rangle\, \dd s\\
  & \leq \| b^n\|_{L^1_T L^q_x} \sup_{t\in [0,T]} \|\cL(X^n_s)\|_{L^p_x}^2
  \lesssim \| b\|_{L^1_T L^q_x}\, \| \rho\|_{L^p_x}^2
\end{align*}
where in the third passage we applied H\"older's and Young's inequalities.

Then Helly's selection theorem, together with the fact that $W^{1,1}_T$ embeds compactly in $L^r_T$ for any $r<\infty$ as well as in the topology of pointwise convergence, implies that $\{Z^n\}_n$ is tight in $L^r_T$ and $\{(\xi,W,Z^n)\}_n$ is tight in $\RR^d\times C_T\times L^r_T$.

We can therefore apply Prokhorov's and Skorohod's theorems to construct another probability space $(\tilde{\Omega},\tilde{\cF},\tilde{\PP})$, carrying a family of random variables $(\tilde{\xi}^n, \tilde{W}^n,\tilde{Z}^n)$ converging $\tilde{\PP}$-a.s. to $(\tilde{\xi},\tilde{W},\tilde{Z})$ in $\RR^d\times C_T\times L^r_T$, such that $\cL_{\tilde{\PP}} (\tilde{\xi}^n, \tilde{W}^n,\tilde{Z}^n) = \cL_{\PP}(\xi,W,Z^n)$.
From this we deduce as before that $\tilde{\PP}$-a.s. $\tilde{Z}^n\to \tilde{Z}$ also in the sense of pointwise convergence, similarly for $\tilde{X}^n:=\tilde{\xi}^n+\tilde{W}^n+\tilde{Z}^n$;
moreover $\tilde{X}^n$ are still solutions to the DDSDE associated to $b^n$ and $\tilde{Z}^n_t = \int_0^t b^n_s \ast \cL_{\tilde{\PP}}(\tilde{X}^n_s) (\tilde{X}^n_s) \dd s$.

In order to conclude, it remains to show that $(\tilde{X},\tilde{W},\tilde{\xi})$ is a weak solution to the DDSDE associated to $b$, on the probability space $(\tilde{\Omega},\tilde{\cF},\tilde{\PP})$.
For notational simplicity we will drop the tildes from now on. 

We first treat the case $p>1$.
Since $\PP$-a.s. $X^n_t\to X^n$, $\cL(X^n_t)\rightharpoonup \cL(X_t)$ weakly and they are uniformly bounded in $L^p_x$ with $p>1$; thus $\cL(X_t)\in L^p_x$ and by lower semicontinuity of weak convergence (weak-$\ast$ if $p=\infty$) it holds that
\[
\sup_{t\in [0,T]} \| \cL(X_t)\|_{L^p_x} \leq \sup_{t\in [0,T]} \liminf_{n\to\infty} \| \cL(X^n_t)\|_{L^p_x} \leq C \| \rho\|_{L^p_x},
\]
which checks requirement $(1)$ of Definition \ref{def:WeakSolsII}.
The verification of $(2)$ is similar and we are left with showing that $Z^n\to Z$ for $Z_t = \int_0^t b_s\ast\cL(X_s)(X_s)\dd s$.

In the case $p>1$, $q<\infty$, we may further assume that $b^n\to b$ in $L^1_T L^q_x$. We fix $\eps>0$ and choose $\bar{b}\in C^1_b([0,T]\times \RR^d;\RR^d)$ such that $\|b-\bar{b}\|_{L^1_T L^q_x}<\varepsilon$ and split the estimate as
\begin{align*}
  \|Z^n - Z\|_{C_T}
  & \leq \int_0^T |b^n_s-\bar{b}_s|\ast \cL(X^n_s) (X^n_s) \dd s + \int_0^T |\bar{b}_s\ast \cL(X^n_s) (X^n_s)-\bar{b}_s\ast \cL(X_s) (X_s)| \dd s\\
  & \quad + \int_0^T |\bar{b}_s - b_s|\ast \cL(X_s) (X_s) \dd s\\
  & := I^n_1 + I^n_2 + I_3.
\end{align*}
 Regarding $I^n_1$, we can apply the uniform estimates on $\| \cL(X^n_t)\|_{L^p_x}$ to obtain
\begin{align*}
  \EE[I^n_1]
  \lesssim \sup_{s\in [0,T]} \| \cL(X^n_s)\|_{L^p_x} ^2 \| b^n-\bar{b}\|_{L^1_T L^q_x}
  \lesssim \| b^n-b\|_{L^1_T L^q_x} + \| b-\bar{b}\|_{L^1_T L^q_x};
\end{align*}
similarly for $I_3$.
On the other hand, since $\bar{b}\in C^1_b$ and $\cL(X^n)\rightharpoonup \cL(X)$, $\bar{b}\ast \cL(X^n)$ converges uniformly to $\bar{b}\ast \cL(X)$; combined with $X^n\to X$ $\PP$-a.s. and the uniform bounds, by dominated convergence we have $\EE[I^n_2]\to 0$. Thus overall we obtain
\begin{align*}
  \limsup_{n\to\infty} \EE[\| Z^n-Z\|_{C_T}]
  & \leq \limsup_{n\to\infty} (\EE[I^n_1] + \EE[I^n_2] + \EE[I_3])\\
  & \lesssim \limsup_{n\to\infty} \| b^n-b\|_{L^1_T L^q_x} + 2 \| b-\bar{b}\|_{L^1_T L^q_x}
  \lesssim 2\eps;
\end{align*}
by the arbitrariness of $\eps>0$ we deduce that $\EE[\| Z^n-Z\|_{C^T}] \to 0$ and that $X$ is a weak solution.

In the case $p=1, q=\infty$ we cannot assume $\| b^n-b\|_{L^1_T L^\infty_x}\rightarrow 0$, but only that $b^n_t(x)\to b_t(x)$ for Lebesgue a.e. $(t,x)\in [0,T]\times\RR^d$.
Since $X^n_t\rightharpoonup X_t$, the sequence $\{\cL(X^n_t)\}_n$ is tight, which together with \eqref{eq:apriori-equiint} implies its equiintegrability.
But then by Dunford--Pettis theorem $\cL(X_t)\in L^1_x$ and $\cL(X^n_t)\rightharpoonup \cL(X_t)$ weakly in $L^1_x$, checking requirement (1) of Definition \ref{def:WeakSolsII}; (2) follows trivially since $b\in L^1_T L^\infty_x$.
Finally, since $\cL(X^n_t)\rightharpoonup \cL(X_t)$ weakly in $L^1_x$, we can apply Lemma \ref{lem:weak-conv-L1} from Appendix \ref{app:maximal} at any fixed $t\in [0,T]$ to conclude that $\EE[\| Z^n-Z\|_{C_T}] \to 0$ still holds.
\end{proof}

Having established existence under suitable integrability assumptions on $b$, it is natural to establish uniqueness under similar requirements on the derivative $D b$. We start with a conditional result of uniqueness in a class of sufficiently regular solutions.

\begin{prop}\label{thm:MKV-uniqueness-under-Sobolev-bis}
Let $q,\,p \in (1,\infty]$ be such that $\frac{1}{p}+\frac{1}{q}<1$ and assume $b \in L^1_T W^{1,q}_x$, $\rho \in L^p_x$. Then both uniqueness in law and pathwise uniqueness holds for \eqref{eq:ConvMckeanSDE} in the class of solutions satisfying
\begin{equation}\label{eq:uniqueness-class}
  \sup_{t\in [0,T]} \| \cL(X_t)\|_{L^p_x} <\infty
\end{equation}
in the following sense: if $X^1$ and $X^2$ are both solutions satisfying \eqref{eq:uniqueness-class}, then their laws as probabilities on the path space $C_T$ coincide; if $X^i$ are defined on the same probability space and solve the DDSDE w.r.t. same input data $(\xi,W)$, then $X^1=X^2$ $\PP$-a.s.
\end{prop}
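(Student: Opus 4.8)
The plan is to establish pathwise uniqueness within the class \eqref{eq:uniqueness-class} first, and then deduce uniqueness in law from it exactly as in Proposition \ref{prop:baby-yamada-watanabe}. Indeed, given two weak solutions in the class, the coupling constructed in that Yamada--Watanabe argument has the same marginal laws, so both coupled processes again satisfy \eqref{eq:uniqueness-class} and are driven by a common input; pathwise uniqueness then forces their laws to agree. Hence I may assume from the outset that $X^1,X^2$ are two solutions on one probability space with the same data $(\xi,W)$, both obeying \eqref{eq:uniqueness-class}, and in particular $X^1_0=X^2_0=\xi$, so that $D_0:=|X^1_0-X^2_0|=0$.

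The key observation is that, \emph{within this class}, each frozen drift $x\mapsto (b_s\ast\cL(X^i_s))(x)$ is genuinely globally Lipschitz with a deterministic, time-integrable constant. Since $\tfrac1p+\tfrac1q<1$ we have $q'<p$, and as each $\cL(X^i_s)$ is a probability measure with density in $L^p_x$, interpolation gives $\sup_{s\in[0,T]}\|\cL(X^i_s)\|_{L^{q'}_x}<\infty$. Writing $\nabla (b_s\ast\cL(X^i_s))=(\nabla b_s)\ast\cL(X^i_s)$ and using Young's inequality $\|f\ast g\|_{L^\infty_x}\le\|f\|_{L^q_x}\|g\|_{L^{q'}_x}$, the gradient is bounded by $L^i_s:=\|\nabla b_s\|_{L^q_x}\|\cL(X^i_s)\|_{L^{q'}_x}\in L^1_T$, uniformly in $\omega$. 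This is precisely where both solutions lying in the regular class is used, and it is what lets me avoid the maximal-function machinery needed for the more general Theorem \ref{thm:intro-main}.

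Next I would split the increment of the difference. Subtracting the two copies of \eqref{eq:ConvMckeanSDE} the data cancels, and adding and subtracting $(b_s\ast\cL(X^1_s))(X^2_s)$ gives
\[
|X^1_t-X^2_t|\le \int_0^t L^1_s\,|X^1_s-X^2_s|\,\dd s+\int_0^t \big|\big(b_s\ast(\cL(X^1_s)-\cL(X^2_s))\big)(X^2_s)\big|\,\dd s.
\]
The first, spatial, integrand is controlled by the Lipschitz bound above. For the second, genuinely nonlocal, term I would \emph{not} estimate pathwise; rather, taking expectations, I bound $\EE\big[\,|R_s(X^2_s)|\,\big]=\int|R_s|\,\cL(X^2_s)(\dd x)$, with $R_s:=b_s\ast(\cL(X^1_s)-\cL(X^2_s))$, by $\|R_s\|_{L^q_x}\|\cL(X^2_s)\|_{L^{q'}_x}$ through Hölder. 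Using an optimal coupling of $(\cL(X^1_s),\cL(X^2_s))$ together with the $L^q_x$-continuity of translations for Sobolev functions, $\|f(\cdot-a)-f(\cdot-b)\|_{L^q_x}\le|a-b|\,\|\nabla f\|_{L^q_x}$, Minkowski's inequality yields $\|R_s\|_{L^q_x}\le \|\nabla b_s\|_{L^q_x}\,d_1(\cL(X^1_s),\cL(X^2_s))$, and finally $d_1(\cL(X^1_s),\cL(X^2_s))\le \EE[|X^1_s-X^2_s|]$ by viewing $(X^1_s,X^2_s)$ as a coupling of its marginals.

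Collecting these and writing $S_t:=\sup_{r\le t}|X^1_r-X^2_r|$, I obtain a closed estimate $\EE[S_t]\le \int_0^t \tilde L_s\,\EE[S_s]\,\dd s$ for some $\tilde L\in L^1_T$ built from $L^1_s$ and $\|\nabla b_s\|_{L^q_x}\sup_s\|\cL(X^2_s)\|_{L^{q'}_x}$. Since $D_0=0$, Gr\"onwall's lemma forces $\EE[S_t]=0$, hence $X^1=X^2$ $\PP$-a.s.; uniqueness in law then follows as explained. The step I expect to require the most care is the measure-difference term: one must resist bounding it pathwise (where it is only of order one, not small) and instead extract its smallness in the $L^q_x$ norm via the optimal coupling and the translation-continuity estimate, pairing against the density in $L^{q'}_x$ — this is exactly where both the convolutional structure and the integrability condition $\tfrac1p+\tfrac1q<1$ are essential.
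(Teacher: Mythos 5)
Your proof is correct, but the way you handle the measure--difference term is genuinely different from the paper's. Both arguments share the same skeleton: reduce to pathwise uniqueness via the coupling construction of Proposition \ref{prop:baby-yamada-watanabe} (your remark that the coupling preserves the marginal laws, hence membership in the class \eqref{eq:uniqueness-class}, is exactly the point the paper leaves implicit), and observe that within the class each frozen drift $b_s\ast\cL(X^i_s)$ is Lipschitz with an $L^1_T$ constant, by interpolation ($q'<p$) plus Young. The divergence is in the nonlocal term $b_s\ast(\mu^1_s-\mu^2_s)$: the paper first runs a crude Gr\"onwall to get a deterministic bound $\|X^1-X^2\|_{C_T}\le C$ (so that $d_{r'}(\mu^1_s,\mu^2_s)<\infty$), then invokes the maximal-function estimate of Lemma \ref{lem: maximal function1} (Hajlasz inequality) to bound $\|b_s\ast(\mu^1_s-\mu^2_s)\|_{L^\infty_x}$ by $d_{r'}(\mu^1_s,\mu^2_s)$, and closes a Gr\"onwall loop in $L^{r'}_\Omega$ via Minkowski's integral inequality. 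You instead never bound this term uniformly in $x$: you estimate it only in expectation, pairing $\|b_s\ast(\mu^1_s-\mu^2_s)\|_{L^q_x}$ against the $L^{q'}_x$ density of $\cL(X^2_s)$ by H\"older, and controlling the $L^q_x$ norm by $\|\nabla b_s\|_{L^q_x}\, d_1(\mu^1_s,\mu^2_s)$ via translation continuity of Sobolev functions, Minkowski's integral inequality and an optimal coupling. This is more elementary --- no maximal functions, no appendix lemma --- and closes the loop at the level of $d_1$ rather than $d_{r'}$. What the paper's heavier tool buys is a pointwise (uniform in $x$) control of $b\ast(\mu^1-\mu^2)$; it is the same mechanism that, in the one-sided form of Lemma \ref{lem:maximal-function2}, drives the stronger stability result of Proposition \ref{prop:stability-sobolev-DDSDE}, where only \emph{one} of the two measures is regular. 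For the present statement, where both solutions are assumed to lie in the class, your route suffices.

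One small point should be made explicit: Gr\"onwall's lemma applied to $\EE[S_t]\le\int_0^t \tilde L_s\,\EE[S_s]\,\dd s$ only yields $\EE[S_t]=0$ if one knows beforehand that $\EE[S_t]<\infty$; Definition \ref{def:WeakSolsII} imposes no moment condition on $X$, so this is not automatic. It is, however, immediate from a tool you already use: Young's inequality gives $\|b_s\ast\mu^i_s\|_{L^\infty_x}\le\|b_s\|_{L^q_x}\|\mu^i_s\|_{L^{q'}_x}$, whence
\begin{equation*}
  S_T\le\int_0^T\|b_s\|_{L^q_x}\big(\|\mu^1_s\|_{L^{q'}_x}+\|\mu^2_s\|_{L^{q'}_x}\big)\,\dd s<\infty
\end{equation*}
deterministically --- the analogue of the paper's preliminary bound $\|X^1-X^2\|_{C_T}\le C$. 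With that line added, your argument is complete.
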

\begin{proof}
Given two solutions $X^i$ as above, let us set $\mu^i_t = \cL(X^i_t)$ and $b^i_t(x) = (b_t\ast \mu^i_t)(x)$; from Proposition \ref{prop:PathwiseExistence2}, for all $t\in [0,T]$, $\mu^i_t \in L^1_x\cap L^p_x$ for $p>q'$, by interpolation $\mu^i \in L^\infty_t L^{q'}_x$ and so by Young's inequality
\begin{equation*}
  \| b^i\|_{L^1_T W^{1,\infty}_x} \lesssim \| b\|_{L^1_T W^{1,q}} \| \mu^i\|_{L^\infty_T L^{q'}_x} \lesssim 1.
\end{equation*}
In particular, both drifts $b^i$ are spatially Lipschitz; moreover by arguing as in Proposition \ref{prop:baby-yamada-watanabe}, we only need to check pathwise uniqueness for solutions $X^i$ to \eqref{eq:ConvMckeanSDE} satisfying \eqref{eq:uniqueness-class}.

First observe that the difference $Y:=X^1-X^2$ satisfies
\begin{align*}
  |Y_t| & \leq \int_0^t |b^1_s(X^1_s) - b^1_s(X^2_s)|\dd s + \int_0^t |(b^1-b^2)_s(X^2_s)|\dd s\\
  & \leq \int_0^t \| b_s\ast \mu^1_s\|_{W^{1,\infty}_x} |Y_s|\dd s + \int_0^t \| b_s\ast (\mu^1_s-\mu^2_s)\|_{L^\infty_x} \dd s\\
  & \lesssim_{\mu^i} \int_0^t \| b_s\|_{W^{1,q}_x} (1+|Y_s|)\dd s;
\end{align*}
applying Gr\"onwall, we find a deterministic constant $C=C(b, \mu^i)>0$ such that $\| X^1-X^2\|_{C_T} = \| Y\|_{C_T} \leq C$ $\PP$-a.s.

As a consequence, we also have that $d_{r'} (\mu^1_t, \mu^2_t)\leq C< \infty$ for any $r'\in (1,\infty)$; in particular, the assumptions of Lemma \ref{lem: maximal function1} from Appendix \ref{app:maximal} are met, as there exists $r>1$ such that $r/q+1/p\leq 1$. It follows that
\begin{align*}
  \| b_s\ast (\mu^1_s-\mu^2_s)\|_{L^\infty_x}
  \lesssim \| b_s\|_{W^{1,q}} 
  \big(\|\mu^1_s\|_{L^p_x}^{1/r} + \|\mu^2_s\|_{L^p_x}^{1/r}\big) d_{r'}(\mu^1_s,\mu^2_s) \lesssim_{\mu^i} \| b_s\|_{W^{1,q}} \| Y_s\|_{L^{r'}_\Omega}.
\end{align*}
We use this new bound to improve the estimates on $Y$, giving:
\[
|Y_t|\lesssim_{\mu^i} \int_0^t \| b_s\|_{W^{1,q}_x} (|Y_s| + \| Y_s\|_{L^{r'}_\Omega}) \dd s
\]
and now applying Minkowski's integral inequality (with $r'>1$) we obtain
\[
\| Y_t\|_{L^{r'}_\Omega} \lesssim_{\mu^i} \int_0^t \| b_s\|_{W^{1,q}_x}\, \| Y_s\|_{L^{r'}_\Omega}\, \dd s.
\]
However, by the hypothesis $b\in L^1_T W^{1,q}_x$ and applying Gr\"onwall's lemma, we deduce $\| Y_t\|_{L^{r'}_\Omega} = 0$ for all $t\in [0,T]$ and so $X^1=X^2$ $\PP$-a.s.
\end{proof}

Combining Propositions \ref{prop:PathwiseExistence2} and \ref{thm:MKV-uniqueness-under-Sobolev-bis} we get the following existence and uniqueness result.

\begin{thm}\label{thm:exist-uniq-DDSDE}
Let $q,\,p \in (1,\infty]$ be such that $\frac{1}{p}+\frac{1}{q}<1$ and $b \in L^1_T W^{1,q}_x$ with $\div b \in L^1_T L^\infty_x$.
Then for any $\rho \in L^p_x$ there exists a strong solution $X_t$ satisfying $\sup_{t\in [0,T]} \| \cL(X_t)\|_{L^p_x}<\infty$, which is unique in this class (in the sense of Proposition \ref{thm:MKV-uniqueness-under-Sobolev-bis}).
\end{thm}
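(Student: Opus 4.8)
The plan is to derive the theorem by combining the existence result of Proposition~\ref{prop:PathwiseExistence2} with the uniqueness result of Proposition~\ref{thm:MKV-uniqueness-under-Sobolev-bis}, and then to upgrade weak existence to strong existence through a freezing argument in the spirit of the Yamada--Watanabe principle (Propositions~\ref{prop:baby-yamada-watanabe} and~\ref{prop:equivalence-weak-strong}).

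First I would check that the two propositions apply under the present hypotheses. Since $b\in L^1_T W^{1,q}_x$ we have in particular $b\in L^1_T L^q_x$, and together with $\div b\in L^1_T L^\infty_x$ and $\frac1p+\frac1q<1$ --- which forces $\frac{1}{2q}+\frac1p<\frac1q+\frac1p<1$ --- the hypotheses of Proposition~\ref{prop:PathwiseExistence2} are met. Hence there exists a weak solution $X$ with $\sup_{t\in[0,T]}\|\cL(X_t)\|_{L^p_x}<\infty$, i.e. a solution lying in the class \eqref{eq:uniqueness-class}. The hypotheses of Proposition~\ref{thm:MKV-uniqueness-under-Sobolev-bis} coincide with those of the theorem, so uniqueness in law and pathwise uniqueness hold in this class; in particular the law $\mu^X=\cL(X)$ on $C_T$ and its time-marginals $\mu_t:=\cL(X_t)$ are uniquely determined by the input data.

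The remaining point is strong existence. Freezing the marginals, I set $\bar b_t:=b_t\ast\mu_t$. As in the proof of Proposition~\ref{thm:MKV-uniqueness-under-Sobolev-bis}, the condition $\frac1p+\frac1q<1$ gives $p>q'$, so by interpolation $\mu\in L^\infty_T L^{q'}_x$, and Young's inequality yields $\bar b\in L^1_T W^{1,\infty}_x$; thus $\bar b$ is spatially Lipschitz with time-integrable Lipschitz constant. I would then invoke the classical flow theory recalled before Proposition~\ref{prop:LawLpApriori}: the random ODE $x_\cdot=x_0+\int_0^\cdot\bar b_s(x_s)\dd s+\omega_\cdot$ admits a unique solution $\Phi(\cdot,x_0;\omega)$, with $\Phi$ jointly measurable. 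Given any input $(\xi,W)$ on any probability space, with $\xi$ and $W$ independent and $\cL(\xi)=\rho\,\dd x$, I define $\bar X:=\Phi(\cdot,\xi;W)$, which is adapted to $\cF^Y_t=\sigma\{Y_s:s\le t\}$ since $\xi=Y_0$ and $W_\cdot=Y_\cdot-Y_0$.

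It remains to verify that $\bar X$ is genuinely a solution of the DDSDE, namely that $\cL(\bar X_t)=\mu_t$ for every $t$; this is the only delicate step, and it is exactly where the Yamada--Watanabe mechanism enters. Both $\bar X$ and the weak solution $X$ solve the \emph{same} frozen equation with the Lipschitz drift $\bar b$, and their inputs share the same joint law $(\rho\,\dd x)\otimes\cL(W)$; since the frozen solution is the deterministic measurable image $\Phi$ of its input, its law depends only on the law of the input, so $\cL(\bar X)=\cL(X)=\mu^X$ and hence $\cL(\bar X_t)=\mu_t$. Consequently $\bar b_t=b_t\ast\cL(\bar X_t)$, so $\bar X$ solves \eqref{eq:ConvMckeanSDE}, satisfies $\sup_{t}\|\cL(\bar X_t)\|_{L^p_x}<\infty$, and is adapted: this is the desired strong solution, and uniqueness in the stated class is precisely the content of Proposition~\ref{thm:MKV-uniqueness-under-Sobolev-bis}. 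The main obstacle throughout is this final law-matching step, which is made tractable only because the interpolation and Young estimates render $\bar b$ Lipschitz, so that classical pathwise uniqueness for the frozen SDE is available.
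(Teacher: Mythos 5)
Your proposal is correct and follows essentially the same route as the paper: weak existence from Proposition \ref{prop:PathwiseExistence2} (noting $\tfrac{1}{2q}+\tfrac1p<1$), uniqueness from Proposition \ref{thm:MKV-uniqueness-under-Sobolev-bis}, and strong existence by freezing the marginals so that $\bar b_t=b_t\ast\mu_t\in L^1_TW^{1,\infty}_x$ and arguing as in Proposition \ref{prop:equivalence-weak-strong}. The paper compresses this into three lines, while you spell out the law-matching step (that $\cL(\bar X)=\cL(X)$ because the frozen Lipschitz SDE's solution is a measurable function of the input) which is exactly the mechanism invoked there.
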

\begin{proof}
The assumption on $(p,q)$ implies that \eqref{eq:exist-coeff-pq} holds, thus by Proposition \ref{prop:PathwiseExistence2} weak existence holds; as observed in the proof of Proposition \ref{thm:MKV-uniqueness-under-Sobolev-bis}, the drift $\bar{b}_t=b_t\ast \cL(X_t)\in L^1_T W^{1,\infty}_x$ and so arguing as in Proposition \ref{prop:equivalence-weak-strong} we deduce strong existence. Uniqueness follows again from Proposition \ref{thm:MKV-uniqueness-under-Sobolev-bis}.
\end{proof}
Theorem \ref{thm:exist-uniq-DDSDE}, together with the a priori estimates from Proposition \ref{prop:LawLpApriori}, guarantees that such solution $X$ is the only possible accumulation point of solutions $X^n$ constructed from mollified drifts $b^n=b\ast\varphi_{1/n}$.
It doesn't however exclude the existence of different solutions $\tilde X$, associated to the same input data $(\xi,W)$, which fail to meet the requirement $\sup_{t\in [0,T]} \| \cL(\tilde{X}_t)\|_{L^p_x}<\infty$.

In order to strengthen the result and truly establish uniqueness in law, without any regularity assumption on $\cL(\tilde{X})_t$, we need more restrictive conditions on $b$; the advantage is that we deduce some strong stability estimates, in the form of equation \eqref{eq:stability-sobolev-DDSDE} below. This result relies on the results of \cite{caravenna2021directional} and in particular Lemma \ref{lem:maximal-function2}. 
\begin{prop}\label{prop:stability-sobolev-DDSDE}
Let $(p,q,r)\in (1,\infty)^3$, $b\in L^1_T W^{1,q}_x$ with $\div b\in L^1_T L^\infty_x$ and
\[
q>d,\quad \frac{r}{p}+\frac{1}{q}\leq 1,\quad \frac{1}{r}+\frac{1}{r'}=1.
\]
Also assume we are given input data $\cL(\xi^1,W^1)=\mu^{\xi^1}\otimes \mu^{W^1}\in \cP_{r'}(\RR^d\times C_T)$, such that $\mu^{\xi^1}(\dd x)=\rho(x)\dd x$ for some $\rho\in L^q_x$. Denote by $\mu^{X^1}$ the unique solution associated to $\mu^{\xi^1+W^1}$, in the sense of Theorem \ref{thm:exist-uniq-DDSDE}.
Then there exists a constant $C$, depending on $d,T,p,q,r$, $\| \div b\|_{L^1_T L^\infty_x}$, $\| b\|_{L^1_T W^{1,p}_x}$ and $\| \rho\|_{L^q}$, with the following property: for any other solution $\mu^{X^2}$ to \eqref{eq:ConvMckeanSDE} with input $\mu^{Y^2}$,
\begin{equation}\label{eq:stability-sobolev-DDSDE}
  d_{r'}(\mu^{X^1},\mu^{X^2}) \leq C d_{r'}\big(\mu^{\xi^1+W^1},\mu^{Y^2}\big).
\end{equation}
\end{prop}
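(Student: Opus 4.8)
The plan is to run the same coupling-and-Gr\"onwall scheme used in Proposition~\ref{thm:MKV-uniqueness-under-Sobolev-bis}, the essential new feature being that we now have \emph{no} a priori integrability on the marginals $\mu^2_t=\cL(X^2_t)$ of the competitor solution, so the symmetric maximal-function estimate of Lemma~\ref{lem: maximal function1} is no longer available. We may assume $\mu^{Y^2}\in\cP_{r'}(C_T)$, since otherwise the right-hand side of \eqref{eq:stability-sobolev-DDSDE} is infinite and there is nothing to prove; recall $\mu^{Y^1}=\mu^{\xi^1+W^1}\in\cP_{r'}(C_T)$ by hypothesis. Fixing an optimal coupling for $d_{r'}(\mu^{Y^1},\mu^{Y^2})$ and applying Remark~\ref{rem:baby-yamada-watanabe}, I realize both solutions on a single probability space, driven by inputs $(Y^1,Y^2)$ with this joint law. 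Writing $V_t:=X^1_t-X^2_t$, $\mu^i_t:=\cL(X^i_t)$ and $\bar b^i_s:=b_s\ast\mu^i_s$, the pair $(X^1,X^2)$ is a coupling of $(\mu^{X^1},\mu^{X^2})$, whence $d_{r'}(\mu^1_s,\mu^2_s)\le\EE[|V_s|^{r'}]^{1/r'}$ for every $s$ and $d_{r'}(\mu^{X^1},\mu^{X^2})\le\EE[\|V\|_{C_T}^{r'}]^{1/r'}$.

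Since $\mu^{X^1}$ is the solution furnished by Theorem~\ref{thm:exist-uniq-DDSDE} with $\rho\in L^q_x$, Proposition~\ref{prop:LawLpApriori} (applied with exponent $q$) gives $\sup_s\|\mu^1_s\|_{L^q_x}\lesssim\|\rho\|_{L^q_x}$; combined with $b\in L^1_T W^{1,q}_x$ and $q>d$, this makes $\bar b^1_s$ spatially Lipschitz with constant $\lesssim\|b_s\|_{W^{1,q}_x}$ uniformly in the randomness, exactly as in the proof of Proposition~\ref{thm:MKV-uniqueness-under-Sobolev-bis}. I then write the pathwise inequality
\[
|V_t|\le |Y^1_t-Y^2_t|+\int_0^t|\bar b^1_s(X^1_s)-\bar b^1_s(X^2_s)|\,\dd s+\int_0^t|(\bar b^1_s-\bar b^2_s)(X^2_s)|\,\dd s
\]
and treat the two integrals separately. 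The first integral (``same measure, two points'') is controlled by the Lipschitz bound on $\bar b^1_s$, yielding $\lesssim\int_0^t\|b_s\|_{W^{1,q}_x}\,|V_s|\,\dd s$.

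The crux is the second integral (``same point, two measures''), where only $\mu^1_s\in L^q_x$ is at our disposal, and this is precisely where the one-sided estimates of \cite{caravenna2021directional}, collected in Lemma~\ref{lem:maximal-function2}, are needed. Expanding $b_s\ast(\mu^1_s-\mu^2_s)$ through a coupling of $(\mu^1_s,\mu^2_s)$ and applying the directional maximal-function bound — in which the maximal function of $Db_s$ is evaluated only at the regular point — produces an estimate of the form
\[
\|b_s\ast(\mu^1_s-\mu^2_s)\|_{L^\infty_x}\lesssim\|b_s\|_{W^{1,q}_x}\,\|\mu^1_s\|_{L^q_x}^{\theta}\,d_{r'}(\mu^1_s,\mu^2_s)
\]
for a suitable exponent $\theta$, depending on $\mu^1_s$ alone; here the assumptions $q>d$ and $\frac{r}{p}+\frac1q\le1$ are exactly what guarantee that the relevant maximal function is integrable against $\mu^1_s\in L^q_x$ in the Hölder balance used to extract the Wasserstein factor.

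Combining the two bounds, taking the $L^{r'}_\Omega$-norm, using $d_{r'}(\mu^1_s,\mu^2_s)\le\|V_s\|_{L^{r'}_\Omega}$ together with Minkowski's integral inequality, and absorbing $\sup_s\|\mu^1_s\|_{L^q_x}$ (hence $\|\rho\|_{L^q_x}$) into the constant, I arrive at
\[
\|V_t\|_{L^{r'}_\Omega}\lesssim d_{r'}(\mu^{Y^1},\mu^{Y^2})+\int_0^t\|b_s\|_{W^{1,q}_x}\,\|V_s\|_{L^{r'}_\Omega}\,\dd s .
\]
As $s\mapsto\|b_s\|_{W^{1,q}_x}$ is integrable, Gr\"onwall's lemma closes the estimate and, passing back through $d_{r'}(\mu^{X^1},\mu^{X^2})\le\EE[\|V\|_{C_T}^{r'}]^{1/r'}$, yields \eqref{eq:stability-sobolev-DDSDE}. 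I expect the only genuine obstacle to be the one-sided treatment of the measure term: producing a bound on $b_s\ast(\mu^1_s-\mu^2_s)$ that is Wasserstein-linear while using solely the $L^q_x$ regularity of $\mu^1_s$ and imposing no integrability whatsoever on $\mu^2_s$. This asymmetry is what permits stability around an arbitrary competitor solution and is exactly the content imported from \cite{caravenna2021directional} via Lemma~\ref{lem:maximal-function2}; everything else is the standard coupling-plus-Gr\"onwall machinery.
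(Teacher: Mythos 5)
Your proposal follows the paper's proof in all essentials: the same optimal coupling of the inputs transferred to a coupling of solutions via Remark \ref{rem:baby-yamada-watanabe}, the same use of an integrability bound on the marginals of $\mu^{X^1}$ \emph{alone}, the same one-sided estimate of \cite{caravenna2021directional} imported through Lemma \ref{lem:maximal-function2} (your split into a ``same measure, two points'' term and a ``same point, two measures'' term is precisely how that lemma is proved; the paper simply invokes the combined statement $|(b\ast\mu)(x)-(b\ast\nu)(y)|\lesssim \|b\|_{W^{1,q}_x}(1+\|\mu\|)(|x-y|+d_{r'}(\mu,\nu))$ in one stroke), and the same Minkowski-plus-Gr\"onwall loop in $L^{r'}_\Omega$. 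Two bookkeeping remarks: the uniform Lebesgue bound on $\mu^1_t$ is part of the solution class of Theorem \ref{thm:exist-uniq-DDSDE} (Proposition \ref{prop:LawLpApriori} as such is proved for $b\in L^1_T C^1_b$), and before running Gr\"onwall one should record that $\mu^{X^i}\in\cP_{r'}(C_T)$ --- the paper gets this from $q>d$, whence $b\in L^1_T C^0_b$ by Sobolev embedding and $\|X^i\|_{C_T}\lesssim \|b\|_{L^1_T W^{1,q}_x}+\|Y^i\|_{C_T}$ --- so that the quantity entering Gr\"onwall is finite and the DDSDE for $X^2$ is meaningful with no regularity on $\mu^{X^2}$.

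There is, however, one genuine gap in your final step. Gr\"onwall applied to your displayed inequality yields $\sup_{t\leq T}\|V_t\|_{L^{r'}_\Omega}\lesssim d_{r'}(\mu^{Y^1},\mu^{Y^2})$, i.e.\ control of $\sup_t \EE[|V_t|^{r'}]^{1/r'}$; but ``passing back through'' $d_{r'}(\mu^{X^1},\mu^{X^2})\leq \EE[\|V\|_{C_T}^{r'}]^{1/r'}$ requires control of $\EE[\sup_t|V_t|^{r'}]^{1/r'}$, which is \emph{not} implied by the former (the supremum and the $L^{r'}_\Omega$-norm do not commute in that direction). The paper closes this with a second, pathwise application of Gr\"onwall: from your estimate one only deduces $\sup_t d_{r'}(\mu^1_t,\mu^2_t)\lesssim d_{r'}(\mu^{Y^1},\mu^{Y^2})$; re-inserting this bound into the pathwise inequality you already wrote gives, $\PP$-a.s.,
\[
\|V\|_{C_T}\lesssim_b \|b\|_{L^1_T W^{1,q}_x}\,\sup_{t\in[0,T]}d_{r'}(\mu^1_t,\mu^2_t)+\|Y^1-Y^2\|_{C_T},
\]
and only after taking $L^{r'}_\Omega$-norms of \emph{this} does one obtain $\EE[\|V\|_{C_T}^{r'}]^{1/r'}\lesssim d_{r'}(\mu^{Y^1},\mu^{Y^2})$, hence \eqref{eq:stability-sobolev-DDSDE} after minimizing over couplings. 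The repair uses only inequalities already in your argument, so the gap is easily filled, but as written the last implication does not follow.
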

\begin{proof}
Since $q>d$, by Sobolev embeddings $b\in L^1_T C^0_b$, so that the DDSDE \eqref{eq:ConvMckeanSDE} is meaningful in the sense of Definition \ref{def:WeakSolsI}, without the need of any regularity assumption on $\mu^{X^2}$.
By Remark \ref{rem:baby-yamada-watanabe}, given any coupling of $\mu^{\xi^1+W^1}, \mu^{Y^2}$, we can construct a coupling of the associated solutions $\mu^{X^1}, \mu^{X^2}$ as well, so from now on we will work with $X^i,\xi^1,W^1,Y^2$ all defined on the same probability space; also for simplicity we will adopt the compact notation $Y^1=\xi^1+W^1$.
Since $X^i$ are solutions to the DDSDE and $b\in L^1_T C^0_b$, we have the trivial estimate
\begin{equation*}
  \EE\big[\| X^i\|_{C_T}^{r'}\big] \lesssim \| b\|_{L^1_T W^{1,q}_x}^{r'} + \EE\big[\| Y^i\|_{C_T}^{r'}\big]
\end{equation*}
which shows that $\mu^{X^i}\in \cP_{r'}(C_T)$.

Having made these preparations, we can now pass to the proof of estimate \eqref{eq:stability-sobolev-DDSDE}. Set $\mu^i_t = \cL(X^i_t)$, then the process $X^1-X^2$ satisfies
\[
|X^1_t - X^2_t|\leq \int_0^t |b_s\ast \mu^1_s (X^1_s) - b_s\ast \mu^2_s (X^2_s)|\dd s + | Y^1_t-Y^2_t|;
\]
since $\sup_{t\in [0,T]} \| \mu^1_t\|_{L^p_x} < \infty$, we can apply Lemma \ref{lem:maximal-function2} from Appendix \ref{app:maximal} to obtain
\begin{align*}
  |X^1_t - X^2_t|
  & \lesssim \int_0^t \|b_s\|_{W^{1,q}_x} (1+ \| \mu^1_s\|_{L^p_x})\, (|X^1_s-X^2_s| + d_{r'} (\mu^1_s,\mu^2_s))\,\dd s + | Y^1_t-Y^2_t|\\
  & \lesssim_{\div b, \rho} \int_0^t \|b_s\|_{W^{1,q}_x}\, (|X^1_s-X^2_s| + d_{r'} (\mu^1_s,\mu^2_s))\,\dd s + \| Y^1-Y^2\|_{C_T};
\end{align*}
taking the $L^{r'}_{\Omega}$-norm on both sides and applying Minkowski's inequality we find
\[
\| X^1_t-X^2_t\|_{L^{r'}_\Omega} \lesssim \int_0^t \| b_s\|_{W^{1,q}_x} \| X^1_s-X^2_s\|_{L^{r'}_\Omega} \dd s + \|\| Y^1-Y^2\|_{C_T}\|_{L^{r'}_\Omega}.
\]
By Gr\"onwall's lemma we can find some constant $\kappa$ such that
\[
\sup_{t\in [0,T]} d_{r'}(\mu^1_t,\mu^2_t) \lesssim e^{\kappa \| b\|_{L^1_T W^{1,q}_x}} \|\| Y^1-Y^2\|_{C_T}\|_{L^{r'}_\Omega}.
\]
In order to obtain a bound in the path space $C_T$ it suffices to observe that the previous inequalities also imply, again by Gr\"onwall,
\[
\| X^1-X^2\|_{C_T} \lesssim_b \| b\|_{L^1_T W^{1,q}_x} \sup_{t\in [0,T]} d_{r'}(\mu^1_t,\mu^2_t) + \| Y^1-Y^2\|_{C_T};
\]
inserting the estimate for $d_{r'}(\mu^1_t,\mu^2_t)$ and taking expectation, we find a constant $C$ as above s.t.
\begin{equation*}
  d_{r'}(\mu^{X^1},\mu^{X^2})
  \leq \EE[\| X^1-X^2\|_{C_T}^{r'}]^{1/r'}
  \leq C\, \EE[\| Y^1-Y^2\|_{C_T}^{r'}]^{1/r'}.
\end{equation*}
Minimizing over all possible couplings $(Y^1,Y^2)$ given the conclusion.
\end{proof}
\section{Mean Field Convergence}\label{sec:MFL}
In this section we prove mean field limits for the interacting particle systems associated to the drifts considered in Sections \ref{sec:BenchmarkResults}-\ref{sec:convol}. We start by presenting Tanaka's idea in a rather abstract fashion and then apply it to our cases of interest.

\subsection{Abstract criteria}

Let us shortly recall the setup. Given a family of $C_T$-valued random variables $\{Y^i,i\in\NN\}$ defined on a probability space $(\Omega,\cF,\PP)$ and a measurable drift $B:[0,T]\times \RR^d\times \cP(\RR^d)\to\RR$ (possibly with $\cP(\RR^d)$ replaced by $\cP_p(\RR^d)$ for some $p\in [1,\infty)$), we say that a family $X^{(N)}:=\{X^{i,N}\}_{i=1}^N$, seen as a $(C_T)^N$-valued random variable, is a solution to the $N$-particle system if for $\PP$-a.e. $\omega\in \Omega$ it holds
\begin{equation}\label{eq:particle-system}
  X^{i,N}_t(\omega) = \int_0^t B_s(X^{i,N}_s(\omega), L^N(X^{(N)}_s(\omega)))\,\dd s + Y^i_t\quad \forall\, t\in [0,T],\, i=1,\ldots, N
\end{equation}
where the integral is imposed to be meaningful in the Lebesgue sense for any such $\omega$ and
\[
L^N(X^{(N)}_t(\omega)): = \frac{1}{N}\sum_{i=1}^N \delta_{X^{i,N}_t (\omega)}
\]
denotes the empirical measure of the system at time $t$. For notational simplicity we will assume different particle systems to be defined on the same probability space, although we could allow $(\Omega,\cF,\PP)$ to vary as a function of $N$. In this case we would adopt the notation $Y^{(N)}:=\{Y^{i,N}\}^N_{i=1}$ for the sequence of input data below.

We will adopt the following notion of mean field convergence.

\begin{defn}[Mean Field Convergence]\label{defn:MFL} Let $\{Y^i\}_{i\in\NN}$ be a sequence of input data such that, defining $Y^{(N)}= \{Y^i\}_{i=1}^N$, one has $\PP$-a.s.
\[
L^N(Y^{(N)}(\omega)) = \frac{1}{N}\sum_{i=1}^N \delta_{Y^i(\omega)} \rightharpoonup \mu^Y \text{ in } \cP(C_T) \text{ as } N\to\infty,
\]
with $Y$ such that there exists a unique solution, $\mu^X$ to the DDSDE \eqref{eq:GenMckeanSDE} with input $\mu^Y$.

Then we say that mean field convergence of the particle system \eqref{eq:intro-particle-system} to the DDSDE \eqref{eq:introddsde} holds along $\{Y^i\}_{i\in\NN}$ if for any sequence of solutions $\{X^{i,N}\}$ to the associated $N$-particle, $\PP$-a.s. we have
\[
L^N(X^{(N)}(\omega))\rightharpoonup \mu^X \text{ in } \cP(C_T) \text{ as } N\to\infty.
\]
\end{defn}
Let us remark that although our definition is non-standard, it is well suited to our purposes below: it doesn't require the solution $\{X^{i,N}\}$ to be unique, only the candidate limit $\mu^X$; similarly we don't impose the property to hold for all possible families $\{Y^i\}$, allowing us some freedom in their choice. As the applications in the next section will show, it covers the i.i.d. case for a large class of $\cL(Y)\in \cP(C_T)$.

We are now ready to present Tanaka's argument; as already mentioned in the introduction, it can be regarded as a ``transfer principle'' from the DDSDE to the particle system.

\begin{prop}\label{prop:abstract-tanaka}
Consider the DDSDE \eqref{eq:GenMckeanSDE} for a given drift $B$. Assume there exists $\mu^{Y^1}\in \cP(\RR^d)$ with the following properties:
\begin{enumerate}[label=\roman*.]
  \item there exists a unique solution $\mu^{X^1}$ associated to $\mu^{Y^1}$;
  \item there exist $p\in [1,\infty)$, a measurable function $F:\cP_p(C_T)\times \cP(C_T)\to [0,+\infty]$ and an increasing continuous function $M:[0,+\infty)\to [0,+\infty)$ with $M(0)=0$ such that, for any other input $\mu^{Y^2}$ and any solution $\mu^{X^2}$ associated to $\mu^{Y^2}$, it holds that
\begin{equation}\label{eq:tanaka-assumption}
  d_p(\mu^{X^1},\mu^{X^2}) \leq F(\mu^{Y^1},\mu^{Y^2})\, M(d_p(\mu^{Y^1},\mu^{Y^2})).
\end{equation}
\end{enumerate}
Then for any solution $\{X^{i,N}\}_i$ to the $N$-particle system associated to $\{Y^i\}_i$, $\PP$-a.s. 
\begin{equation}\label{eq:tanaka-conclusion}
  d_p(\mu^{X^1},L^N(X^{(N)}(\omega)) \leq F(\mu^{Y^1},L^N(Y^{(N)}(\omega)))\, M\left(d_p\left(\mu^{Y^1},L^N(Y^{(N)}(\omega))\right)\right).
\end{equation}
\end{prop}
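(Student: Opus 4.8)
The plan is to apply Tanaka's ``transfer principle'': for each fixed realization $\omega$ we reinterpret the empirical measures $L^N(Y^{(N)}(\omega))$ and $L^N(X^{(N)}(\omega))$ as the laws of random variables on an auxiliary \emph{finite} probability space, observe that the particle system \eqref{eq:particle-system} then becomes a genuine instance of the DDSDE \eqref{eq:GenMckeanSDE}, and invoke the stability estimate \eqref{eq:tanaka-assumption} directly. No approximation or limiting procedure is involved; the entire content lies in the reformulation.

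Concretely, I would fix $\omega$ in the full-measure set on which \eqref{eq:particle-system} holds and introduce the auxiliary space $(\Lambda, 2^\Lambda, \lambda)$ with $\Lambda=\{1,\dots,N\}$ and $\lambda$ the uniform measure. Defining $C_T$-valued random variables on $\Lambda$ by $\tilde{Y}(i):=Y^i(\omega)$ and $\tilde{X}(i):=X^{i,N}(\omega)$, their laws are \emph{exactly} the empirical measures: $\cL_\lambda(\tilde{Y})=L^N(Y^{(N)}(\omega))$ and $\cL_\lambda(\tilde{X})=L^N(X^{(N)}(\omega))$, and likewise $\cL_\lambda(\tilde{X}_t)=L^N(X^{(N)}_t(\omega))$ for each $t$.

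Next I verify that $(\Lambda,2^\Lambda,\lambda;\tilde{X},\tilde{Y})$ is a weak solution to \eqref{eq:GenMckeanSDE} with input $\tilde{Y}$. Substituting $\cL_\lambda(\tilde{X}_s)=L^N(X^{(N)}_s(\omega))$ into \eqref{eq:particle-system} shows that, for every $i\in\Lambda$,
\[
\tilde{X}_t(i)=\int_0^t B_s\big(\tilde{X}_s(i),\cL_\lambda(\tilde{X}_s)\big)\,\dd s+\tilde{Y}_t(i),
\]
so the DDSDE relation holds $\lambda$-a.s. The remaining requirements of Definition \ref{def:WeakSolsI} are immediate, since $\tilde{X}$ takes only the $N$ continuous values $X^{i,N}(\omega)$: one has $\|\cL_\lambda(\tilde{X}_t)\|_p^p=\tfrac{1}{N}\sum_{i=1}^N|X^{i,N}_t(\omega)|^p$, which is continuous and hence bounded in $t$, and $\int_0^T|B_s(\tilde{X}_s,\cL_\lambda(\tilde{X}_s))|\,\dd s<\infty$ follows from \eqref{eq:particle-system} being meaningful. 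Thus $L^N(X^{(N)}(\omega))$ is a solution associated to the input $L^N(Y^{(N)}(\omega))$.

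With this identification in hand, the conclusion \eqref{eq:tanaka-conclusion} is precisely assumption (ii) applied with $\mu^{Y^2}=L^N(Y^{(N)}(\omega))$ and $\mu^{X^2}=L^N(X^{(N)}(\omega))$; as the argument runs for every $\omega$ in the full-measure set on which the particle system is well defined, the inequality holds $\PP$-a.s. The one point genuinely requiring care—and the step I would expect to be the main (though modest) obstacle—is the bookkeeping confirming that the empirical measure of the particle solution really plays the role of $\mu^{X^2}$, i.e.\ that it solves the DDSDE driven by $L^N(Y^{(N)}(\omega))$; once this is granted there is nothing further to prove.
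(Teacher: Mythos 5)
Your proposal is correct and follows essentially the same route as the paper's proof: both reinterpret the particle system, for each fixed $\omega$, as a DDSDE on the finite uniform probability space $\{1,\dots,N\}$, identify the empirical measures with the laws $\cL_{\PP_N}$ of the coordinate variables, and then apply the stability assumption \eqref{eq:tanaka-assumption} with $\mu^{Y^2}=L^N(Y^{(N)}(\omega))$ and $\mu^{X^2}=L^N(X^{(N)}(\omega))$. Your explicit verification of the weak-solution requirements of Definition \ref{def:WeakSolsI} on the finite space is a slightly more careful bookkeeping step than the paper spells out, but it is the same argument.
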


Before giving the proof, let us explain the meaning of condition \eqref{eq:tanaka-assumption}. In the simplest cases like Section \ref{sec:LipWellPosed}, it can be shown that the DDSDE \eqref{eq:GenMckeanSDE} is well-posed for all $\mu^Y\in \cP(C_T)$ and defines a continuous solution map $S^B:\cP(C_T)\to \cP(C_T)$ by $\mu^Y\mapsto \mu^X$. In this case condition \eqref{eq:tanaka-assumption} is satisfied for $F\equiv 1$ and $M$ the modulus of continuity of $S^B$ around $\mu^{Y^1}$; if $S^B$ is globally Lipschitz, then $M(r) = C r$ for some constant $C>0$. The assumptions of Proposition \ref{prop:abstract-tanaka} are just a convenient generalization to the case in which $S^B$ is not well-defined anymore, or maybe not on an open neighbourhood of $\mu^{Y^1}$ in $\cP_p(C_T)$ but only on a strict subset of it (encoded by the presence of $F$ which can take value $+\infty$).

\begin{proof}
Let the family $Y^{(N)}=\{Y^i\}_{i=1}^N$ be defined on a probability space $(\Omega,\cF,\PP)$, together with a solution $X^{(N)}=\{X^{i,N}\}_{i=1}^N$ to the $N$-particle system; we can regard $X^{(N)},\,Y^{(N)}$ as $(C_T)^N$-valued random variables.
Define a new probability space $(\Omega_N,\cF_N,\PP_N)$ by
\begin{align*}
  \Omega_N := \{1,\ldots,N\},\quad \cF_N:= 2^{\Omega_N},\quad \PP_N := \frac{1}{N}\sum_{i=1}^N\delta_i.
\end{align*}
We can identify any element $y^{(N)} = \{y^i\}_{i=1}^N\in (C_T)^N$ as a random variable $\Omega_N \ni i \mapsto y^i \in C_T$. With this identification in place, for any $\varphi \in C_b(C_T)$ we have
\begin{equation*}
  \EE_{\PP_N} \Big[\varphi\big(y^{(N)}\big)\Big] = \frac{1}{N}\sum_{i=1}^N \varphi(y^i) = \Big\langle \varphi, \frac{1}{N} \sum_{i=1}^N \delta_{y^i} \Big\rangle = \langle \varphi, L^{N}(y^{(N)})\rangle;
\end{equation*}
that is, $L^N(y^{(N)}) = \cL_{\PP_N} (y^{(N)})$. Applying the above to $Y^{(N)}(\omega)$, $X^{(N)}(\omega)$ we deduce that for $\PP$-a.e. $\omega$,
\begin{equation*}
  X^{i,N}_t(\omega) = \int_0^t B_s(X^{i,N}_s(\omega), \cL_{\PP_N}(X^{(N)}_s(\omega))\,\dd s + Y^i_t(\omega) \quad \forall\, i\in\Omega_N.
\end{equation*}
Namely for any such $\omega$ the solution $X^{(N)}(\omega)$ to the particle system can be regarded as a solution to the DDSDE \eqref{eq:GenMckeanSDE} with input $Y^{(N)}(\omega)$ with respect to the probability space $(\Omega_N,\cF_N,\PP_N)$.
Applying assumption \eqref{eq:tanaka-assumption} for the choice $\mu^{Y^2}= \cL_{\PP_N}(Y^{(N)}(\omega))=L^N(Y^{N}(\omega))$, similarly $\mu^{X^2}$, readily gives the conclusion.
\end{proof}

\begin{rem}\label{rem:particle}
The proof also reveals the following simple fact: if the DDSDE \eqref{eq:GenMckeanSDE} is well-posed with a globally defined solution map $S^B:\cP_p(C_T)\to\cP_p(C_T)$, then any solution to the particle system \eqref{eq:particle-system} satisfies $X^{(N)}(\omega)=S^B(Y^{(N)}(\omega))$. In particular strong existence, pathwise uniqueness and uniqueness in law hold for the particle system as well.
\end{rem}

We immediately deduce the following.

\begin{cor}\label{cor:tanaka-suff}
Suppose $\mu^{Y^1}$ satisfies the assumptions of Proposition \ref{prop:abstract-tanaka}. Then the mean field convergence holds along any sequence $\{Y^i\}_i$ such that $\PP$-a.s.,
\begin{equation*}
  \limsup_{N\to\infty} F\left(\mu^{Y^1},L^N(Y^{(N)}(\omega))\right)<\infty,
  \quad
  \lim_{N\to\infty} d_p\left(\mu^{Y^1},L^N(Y^{(N)}(\omega))\right) = 0.
\end{equation*}
\end{cor}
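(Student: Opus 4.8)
The plan is to read the conclusion directly off Proposition \ref{prop:abstract-tanaka}, using only the continuity of $M$ and the boundedness of $F$ supplied by the hypotheses. First I would check that the statement of mean field convergence is even meaningful here: the hypothesis $\lim_{N\to\infty} d_p(\mu^{Y^1},L^N(Y^{(N)}(\omega))) = 0$ gives in particular, by the fact recalled in Section \ref{sec:notation} that $d_p$-convergence implies weak convergence, that $L^N(Y^{(N)}(\omega)) \rightharpoonup \mu^{Y^1}$ in $\cP(C_T)$ for $\PP$-a.e. $\omega$. Together with assumption (i) of Proposition \ref{prop:abstract-tanaka}, which furnishes the unique solution $\mu^{X^1}$ associated to $\mu^{Y^1}$, this shows that the premises of Definition \ref{defn:MFL} are satisfied with $\mu^Y=\mu^{Y^1}$ and $\mu^X=\mu^{X^1}$; so the task reduces to proving $L^N(X^{(N)}(\omega)) \rightharpoonup \mu^{X^1}$ $\PP$-a.s.

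Next I would fix $\omega$ in the intersection of the full-measure event on which both hypotheses hold and the full-measure event on which the pointwise bound \eqref{eq:tanaka-conclusion} of Proposition \ref{prop:abstract-tanaka} is valid; this intersection still has probability one. On this event I would pass to the limit $N\to\infty$ in
\[
d_p\big(\mu^{X^1},L^N(X^{(N)}(\omega))\big) \leq F\big(\mu^{Y^1},L^N(Y^{(N)}(\omega))\big)\, M\Big(d_p\big(\mu^{Y^1},L^N(Y^{(N)}(\omega))\big)\Big).
\]
Since $M$ is continuous with $M(0)=0$ and the argument $d_p(\mu^{Y^1},L^N(Y^{(N)}(\omega)))$ tends to $0$, the factor $M(\,\cdot\,)$ tends to $0$, while the factor $F(\mu^{Y^1},L^N(Y^{(N)}(\omega)))$ is bounded along the sequence by the $\limsup$ hypothesis. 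Hence the right-hand side is a bounded sequence times a sequence converging to $0$, so the product vanishes and I conclude $d_p(\mu^{X^1},L^N(X^{(N)}(\omega))) \to 0$.

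Finally, since $d_p$-convergence implies weak convergence, this yields $L^N(X^{(N)}(\omega)) \rightharpoonup \mu^{X^1}$ in $\cP(C_T)$ for $\PP$-a.e. $\omega$, which is exactly mean field convergence in the sense of Definition \ref{defn:MFL}. There is essentially no genuine obstacle: all the work is already contained in Proposition \ref{prop:abstract-tanaka}. The only points requiring a little care are bookkeeping ones, namely that $F$ may a priori take the value $+\infty$ for finitely many $N$ (which is harmless, as the finiteness of the $\limsup$ ensures $F$ is eventually bounded and only the tail of the sequence matters in the limit) and that the several $\PP$-a.s. statements must be intersected into a single full-measure event before arguing pathwise.
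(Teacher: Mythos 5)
Your proof is correct and is precisely the argument the paper leaves implicit when it states the corollary as an immediate consequence of Proposition \ref{prop:abstract-tanaka}: fix $\omega$ in the intersection of the relevant full-measure events, pass to the limit in \eqref{eq:tanaka-conclusion} using $M(0)=0$ and continuity of $M$ together with the finite $\limsup$ of the $F$-factor, and conclude via the fact that $d_p$-convergence implies weak convergence. Your bookkeeping remarks (intersecting the a.s.\ events, tolerating $F=+\infty$ for finitely many $N$) are exactly the right points of care and match the intended reading of the paper.
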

The above sufficient conditions still need to be checked case-by-case; however if $F\equiv 1$, the next general lemma shows that they hold in the classical case of i.i.d. data.
\begin{lem}\label{lem:IIDConverge}
Let $E$ be a separable Banach space, $p\in [1,\infty)$ and $\{Z^i\}_{i\geq 1}$ be a family of i.i.d. random variables with common law $\mu\in \cP_p(E)$. Then
\begin{equation}
  \lim_{N\rightarrow \infty}d_p(L^N(Z^{(N)}(\omega)),\mu ) =0\quad \text{for $\PP$-a.e. }\omega.
\end{equation}
\end{lem}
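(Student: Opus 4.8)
The plan is to invoke the characterization of Wasserstein convergence recalled in Section \ref{sec:notation}: for a sequence $\{\nu_N\}_N\subset \cP_p(E)$ one has $d_p(\nu_N,\mu)\to 0$ if and only if $\nu_N\rightharpoonup\mu$ weakly \emph{and} $\|\nu_N\|_p\to\|\mu\|_p$ (Theorem 6.9 in \cite{villani2008optimal}). I would apply this with $\nu_N=L^N(Z^{(N)}(\omega))$ and verify the two conditions separately, $\PP$-almost surely.

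First I would establish the a.s.\ weak convergence $L^N(Z^{(N)}(\omega))\rightharpoonup\mu$. Since $E$ is separable, the space $\cP(E)$ endowed with the weak topology is itself separable and metrizable; in particular there is a countable family $\{\varphi_k\}_{k\in\NN}\subset C_b(E)$ which is convergence determining, meaning that $\nu_N\rightharpoonup\nu$ holds as soon as $\langle\varphi_k,\nu_N\rangle\to\langle\varphi_k,\nu\rangle$ for every $k$. For each fixed $k$, the random variables $\{\varphi_k(Z^i)\}_{i\geq 1}$ are i.i.d.\ and bounded, hence integrable, with mean $\langle\varphi_k,\mu\rangle$; Kolmogorov's strong law of large numbers therefore yields a full-measure event $\Omega_k$ on which
\[
\langle \varphi_k, L^N(Z^{(N)}(\omega))\rangle = \frac{1}{N}\sum_{i=1}^N \varphi_k(Z^i(\omega)) \xrightarrow[N\to\infty]{} \langle \varphi_k,\mu\rangle.
\]
Setting $\Omega_\ast:=\bigcap_{k\in\NN}\Omega_k$, which still has full measure, on $\Omega_\ast$ all these limits hold simultaneously and hence $L^N(Z^{(N)}(\omega))\rightharpoonup\mu$.

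Next I would control the $p$-th moments. Since $\mu\in\cP_p(E)$, the i.i.d.\ variables $\|Z^i\|_E^p$ are integrable with $\EE[\|Z^1\|_E^p]=\|\mu\|_p^p$, so a further application of the strong law provides a full-measure event $\Omega_{\ast\ast}$ on which
\[
\|L^N(Z^{(N)}(\omega))\|_p^p = \frac{1}{N}\sum_{i=1}^N \|Z^i(\omega)\|_E^p \xrightarrow[N\to\infty]{} \|\mu\|_p^p.
\]
On $\Omega_\ast\cap\Omega_{\ast\ast}$ both hypotheses of the cited characterization are met, giving $d_p(L^N(Z^{(N)}(\omega)),\mu)\to 0$, which is the claim.

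The only genuinely delicate point is the existence of a countable convergence-determining family $\{\varphi_k\}$: this is precisely what allows one to promote the countably many pointwise strong laws to full weak convergence through a single countable intersection of null sets. It is a standard consequence of separability of $E$ (for instance, one may metrize weak convergence by a bounded--Lipschitz distance and take a countable dense set of bounded Lipschitz functions), and the resulting statement is essentially Varadarajan's theorem on empirical measures. Everything else reduces to two direct applications of the classical strong law of large numbers.
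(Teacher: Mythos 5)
Your proof is correct and follows essentially the same route as the paper: both split $d_p$-convergence into almost sure weak convergence of the empirical measures plus almost sure convergence of the $p$-th moments, and then invoke the characterization of Wasserstein convergence from \cite[Theorem 6.9]{villani2008optimal}. The only difference is cosmetic --- the paper cites the a.s.\ weak convergence as the classical Glivenko--Cantelli (Varadarajan) theorem, whereas you re-derive it via a countable convergence-determining family and the strong law, which is a standard and valid argument.
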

\begin{proof}
The statement is very close to \cite[Lem. 54]{coghi2020pathwise}; the fact that $L^N(X^N(\omega))\rightharpoonup \mu$ for $\PP$-a.e. $\omega$ is a classical result, sometimes referred to as Glivenko--Cantelli theorem. Moreover by the law of large numbers we have
\begin{align*}
  \| L^N(X^{(N)}(\omega))\|_p^p
  = \frac{1}{N} \sum_{i=1}^N \|X^i(\omega)\|_E^p
  \to \int_E \| x\|_E^p\, \mu(\dd x)
  = \| \mu\|_p^p \quad \PP\text{-a.s.}
\end{align*}
and combining these facts gives the conclusion.
\end{proof}


\begin{rem}
If the data $\{Y^i\}_{i=1}^N$ are exchangeable (which is true in the i.i.d. case), then so are $\{X^{i,N}\}$. It then follows from \cite[Proposition 2.2]{sznitman1991topics} that the mean field convergence is equivalent to the property of propagation of chaos; the proof therein is based on purely combinatorial arguments and does not rely on any assumption on $Y$, thus can be applied directly to our setting.
\end{rem}

\begin{rem}
Often one is not just interested in establishing the mean field limit property but also to derive rates of convergence for $d_p(L^N(X^{(N)}), \cL(X))$ as a function of $N$; thanks to Proposition \ref{prop:abstract-tanaka}, once the functions $F$ and $M$ are explicit, it suffices to find similar rates for $d_p(L^N(Y^{(N)}), \cL(Y))$. In the case of i.i.d. data, this problem has been studied extensively, see for instance \cite{bolley2007quantitative, dereich2013constructive}; the most advanced result in the case of measures supported on $\RR^d$ we are aware of are the ones presented in \cite{fournier2015rate}. However rates of convergence in $\cP_p(C_T)$ are much more difficult to obtain and only partial results (mostly relying on Sanov's theorem and Talagrand inequalities) are available, see \cite{bolley2010quantitative, boissard2011simple}.
\end{rem}

\subsection{Applications to particular drifts}

We are now ready to apply the previous results to the drifts considered in Sections \ref{sec:BenchmarkResults}-\ref{sec:convol}.

\subsubsection{Osgood drifts}

We assume that $B$ satisfies Assumption \ref{ass:GenLinearGrowth}, with $p=1$ and Assumption \ref{ass:Osgood}; it then follows from Theorem \ref{thm:osgood-stability} that the DDSDE has an associated solution map $S^B:\cP_1(C_T)\to \cP_1(C_T)$ with modulus of continuity $M$ of the form
\[
M(r) = G^{-1}\big( G(r) + \|h\|_{L^1_T}\big)
\]
for $G$ given by \eqref{eq:bihari-G}. By Remark \ref{rem:particle}, the particle system \eqref{eq:particle-system} is well-posed and $X^{(N)}(\omega) = S^B(Y^{(N)}(\omega))$.

\begin{cor}\label{cor:MFL-osgood}
Under the above assumptions on $B$, for any $\mu^Y\in \cP_1(C_T)$ we have 
\begin{equation*}
  d_1(L^N(X^{(N)}(\omega)), S^B(\mu^{Y})) \leq M(d_1(L^N(Y^{(N)}(\omega)), \mu^{Y})) \quad \PP\text{-a.s.};
\end{equation*}
as a consequence, mean field convergence holds along any sequence $\{Y^i\}$ such that
\begin{align*}
  d_1(L^N(Y^{(N)}(\omega)), \mu^Y)\to 0 \quad \PP\text{-a.s.}
\end{align*}
which is true in particular if $\{Y^i\}$ are taken as i.i.d. variables distributed as $\mu^Y$.
\end{cor}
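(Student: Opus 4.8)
The plan is to recognize this corollary as an immediate specialization of the abstract transfer principle in Proposition \ref{prop:abstract-tanaka} to the case $F\equiv 1$, fed by the stability estimate already obtained in Theorem \ref{thm:osgood-stability}. The only real work is to check that the hypotheses of that proposition are met; once this is done, both displayed claims follow by direct citation.

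First I would verify the two conditions of Proposition \ref{prop:abstract-tanaka} with $p=1$ and $\mu^{Y^1}=\mu^Y$. Condition (i), namely existence and uniqueness of the solution $\mu^X=S^B(\mu^Y)$ associated to $\mu^Y$, is exactly the content of Theorem \ref{thm:osgood-stability} (applied with $B^1=B^2=B$, so that the input data are the same drift). For condition (ii), I would invoke the stability estimate \eqref{eq:osgood stability}, again with the two drifts taken equal so that the term $T\|B^1-B^2\|_\infty$ drops out; this yields, for any other input $\mu^{Y^2}$ and any associated solution $\mu^{X^2}$,
\[
d_1(S^B(\mu^Y),\mu^{X^2}) \leq M\big(d_1(\mu^Y,\mu^{Y^2})\big),
\]
with $M$ the function provided by Theorem \ref{thm:osgood-stability}. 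This is precisely \eqref{eq:tanaka-assumption} with $F\equiv 1$ and $M$ continuous, increasing and vanishing at $0$.

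With the hypotheses in place, the first displayed inequality of the corollary is nothing but the conclusion \eqref{eq:tanaka-conclusion} of Proposition \ref{prop:abstract-tanaka} evaluated at $\mu^{Y^1}=\mu^Y$, using $F\equiv 1$; note in particular that this holds for \emph{any} solution of the particle system, no uniqueness of $\{X^{i,N}\}$ being required. For the mean field convergence statement I would appeal to Corollary \ref{cor:tanaka-suff}: since $F\equiv 1$ the requirement $\limsup_N F(\mu^Y,L^N(Y^{(N)}(\omega)))<\infty$ is automatic, so mean field convergence holds along any sequence $\{Y^i\}$ for which $d_1(L^N(Y^{(N)}(\omega)),\mu^Y)\to 0$ $\PP$-a.s. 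Indeed, continuity of $M$ at $0$ together with $M(0)=0$ forces the right-hand side of the transferred bound to vanish, giving $d_1(L^N(X^{(N)}(\omega)),S^B(\mu^Y))\to 0$ and hence the weak convergence demanded by Definition \ref{defn:MFL}. The i.i.d. case is then covered by Lemma \ref{lem:IIDConverge} applied on $E=C_T$ with common law $\mu^Y\in\cP_1(C_T)$, which supplies exactly the required almost sure $d_1$-convergence of the empirical measures.

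I do not anticipate a genuine obstacle, since every ingredient is already available in the preceding sections; the only point needing minor care is confirming that the Osgood stability estimate fits the template \eqref{eq:tanaka-assumption} with the trivial weight $F\equiv 1$, and that convergence in $d_1$ of the empirical measures yields the weak convergence in the definition of mean field convergence.
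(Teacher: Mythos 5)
Your proposal is correct and follows exactly the paper's own route: Proposition \ref{prop:abstract-tanaka} with $F\equiv 1$ and $M$ supplied by Theorem \ref{thm:osgood-stability}, then Corollary \ref{cor:tanaka-suff} for the convergence statement and Lemma \ref{lem:IIDConverge} for the i.i.d.\ case. The only difference is that you spell out the verification of hypotheses (i) and (ii) of the abstract transfer principle, which the paper leaves implicit; this is a fair expansion, not a different argument.
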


\begin{proof}
The first statement is an immediate consequence of Proposition \ref{prop:abstract-tanaka} for the choice $F\equiv 1$ on $\cP_1(C_T)$and $M$ as above; the second one comes from Corollary \ref{cor:tanaka-suff}. The claim for i.i.d. variables follows from Lemma \ref{lem:IIDConverge}.
\end{proof}

Let us finally mention that the function $M$ can be explicitly computed from the modulus of continuity $f$, so one can also obtain more precise information on the rate of convergence.

\subsubsection{Monotone drifts}

We now assume that $B$ satisfies Assumption \ref{ass:GenLinearGrowth} and one between Assumptions \ref{ass:monotone-coeff}, \eqref{eq:monotone-coeff-2} or \eqref{eq:monotone-integr-cond} (for the same $p$); thus Lemma \ref{lem:monotone-sol-map} is applicable and we have a continuous solution map $S^B:\cP_p(C_T)\to \cP_p(C_T)$ associated to the DDSDE. As before, by Remark \ref{rem:particle} the particle system \eqref{eq:particle-system} is also well-posed.

\begin{cor}\label{cor:MFL-monotone}
Under the above assumptions on $B$, for any $\mu^Y\in \cP_p(C_T)$ we have 
\begin{equation*}
  d_p(L^N(X^{(N)}(\omega)), S^B(\mu^{Y})) \leq M^Y(d_p(L^N(Y^{(N)}(\omega)), \mu^{Y})) \quad \PP\text{-a.s.}
\end{equation*}
once we take $M^Y$ to be the modulus of continuity of $S^B$ around $\mu^Y$, namely
\begin{equation*}
  M(r) := \sup\Big\{ d_p(S^B(\mu^Y),S^B(\nu))\ \big\vert \ \nu\in \cP_p(C_T),\, d_p(\nu,\mu^{Y^1})\leq r\Big\}.
\end{equation*}
As a consequence, mean field convergence holds along any sequence $\{Y^i\}$ such that
\begin{align*}
  d_p(L^N(Y^{(N)}(\omega)), \mu^Y)\to 0 \quad \PP\text{-a.s.}
\end{align*}
which is true in particular if $\{Y^i\}$ are taken as i.i.d. variables distributed as $\mu^Y\in \cP_p(C_T)$.
\end{cor}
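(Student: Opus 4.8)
The plan is to reduce everything to the abstract transfer principle of Proposition \ref{prop:abstract-tanaka}, exactly as in the Osgood case of Corollary \ref{cor:MFL-osgood}, the only difference being that here we have merely a continuous (rather than Lipschitz or explicitly modulated) solution map. First I would record that, under the stated hypotheses, Lemma \ref{lem:monotone-sol-map} provides a globally defined continuous solution map $S^B:\cP_p(C_T)\to\cP_p(C_T)$; in particular, for every $\mu^Y\in\cP_p(C_T)$ there is a unique associated solution $\mu^X=S^B(\mu^Y)$, so hypothesis (i) of Proposition \ref{prop:abstract-tanaka} holds at $\mu^{Y^1}=\mu^Y$. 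For hypothesis (ii) I would take $F\equiv 1$ and $M=M^Y$, the modulus of continuity of $S^B$ at $\mu^Y$ defined in the statement; by construction $M^Y$ is non-decreasing, satisfies $M^Y(0)=0$ (take $\nu=\mu^Y$), and, crucially, the continuity of $S^B$ at $\mu^Y$ furnished by Lemma \ref{lem:monotone-sol-map} gives $\lim_{r\to 0^+}M^Y(r)=0$. Inequality \eqref{eq:tanaka-assumption} with this choice is then nothing but the definition of $M^Y$.

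Next I would invoke Proposition \ref{prop:abstract-tanaka} (equivalently, re-run its short Tanaka argument: view $X^{(N)}(\omega)$ as a solution of the DDSDE on the finite space $(\Omega_N,\cF_N,\PP_N)$ with input $Y^{(N)}(\omega)$, so that $L^N(X^{(N)}(\omega))=S^B(L^N(Y^{(N)}(\omega)))$ for $\PP$-a.e.\ $\omega$, as in Remark \ref{rem:particle}) to obtain the a.s.\ bound
\[
  d_p\big(L^N(X^{(N)}(\omega)),S^B(\mu^Y)\big)\le M^Y\big(d_p(L^N(Y^{(N)}(\omega)),\mu^Y)\big),
\]
which is the first displayed inequality. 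The second statement then follows from Corollary \ref{cor:tanaka-suff}: since $F\equiv 1$ the condition $\limsup_N F<\infty$ is automatic, and if $d_p(L^N(Y^{(N)}(\omega)),\mu^Y)\to 0$ $\PP$-a.s., then using $\lim_{r\to0^+}M^Y(r)=0$ the right-hand side above tends to $0$, whence $d_p(L^N(X^{(N)}(\omega)),\mu^X)\to0$ and in particular $L^N(X^{(N)}(\omega))\rightharpoonup\mu^X$ $\PP$-a.s., i.e.\ mean field convergence in the sense of Definition \ref{defn:MFL}. Finally, for i.i.d.\ $\{Y^i\}$ with common law $\mu^Y\in\cP_p(C_T)$, Lemma \ref{lem:IIDConverge} guarantees $d_p(L^N(Y^{(N)}(\omega)),\mu^Y)\to0$ $\PP$-a.s., so the hypothesis of the second statement is met.

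The only genuinely delicate point is the book-keeping on $M^Y$: Proposition \ref{prop:abstract-tanaka} is phrased for a modulus that is continuous and increasing on all of $[0,\infty)$, whereas the sup-defined $M^Y$ is only guaranteed to be non-decreasing and continuous at the origin. This is harmless for the first inequality, which holds verbatim from the identity $L^N(X^{(N)}(\omega))=S^B(L^N(Y^{(N)}(\omega)))$ and the definition of $M^Y$, and for the limit argument, which uses only $M^Y(r)\to0$ as $r\to0^+$. If one insists on matching the hypotheses of Proposition \ref{prop:abstract-tanaka} literally, I would replace $M^Y$ by a continuous increasing majorant, for instance
\[
  \bar M^Y(r):=\frac{1}{r}\int_r^{2r}M^Y(s)\,\dd s=\int_1^2 M^Y(ru)\,\dd u\quad (r>0),\qquad \bar M^Y(0):=0,
\]
which is continuous, non-decreasing, dominates $M^Y$ (since $M^Y$ is non-decreasing), and satisfies $\bar M^Y(0)=0$; the whole argument then goes through unchanged with $\bar M^Y$ in place of $M^Y$.
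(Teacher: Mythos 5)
Your proof is correct and follows exactly the route the paper takes: the paper's own proof of Corollary \ref{cor:MFL-monotone} simply declares it identical to that of Corollary \ref{cor:MFL-osgood}, i.e.\ Proposition \ref{prop:abstract-tanaka} with $F\equiv 1$ and $M=M^Y$ (the identity $L^N(X^{(N)}(\omega))=S^B(L^N(Y^{(N)}(\omega)))$ from Remark \ref{rem:particle}), followed by Corollary \ref{cor:tanaka-suff} and Lemma \ref{lem:IIDConverge} for the i.i.d.\ case, noting that only qualitative continuity of $S^B$ is available so no rates can be extracted. Your additional book-keeping, observing that Lemma \ref{lem:monotone-sol-map} only guarantees $M^Y$ is non-decreasing with $M^Y(r)\to 0$ as $r\to 0^+$ and supplying the continuous increasing majorant $\bar M^Y$ to match the literal hypotheses of Proposition \ref{prop:abstract-tanaka}, is a minor but legitimate refinement that the paper's terse proof glosses over.
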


The proof is identical to that of Corollary \ref{cor:MFL-osgood} and thus omitted; let us point out however that in this case only qualitative continuity of $S^B$ is available, thus we can't obtain quantitative rates of convergence.

\subsubsection{Locally Lipschitz drifts}

Here we impose $B$ to satisfy Assumption \ref{ass:local-lipschitz}; by Proposition \ref{prop:local-lipschitz-stability}, we can talk of stability of solutions, but technically we don't have a well defined solution map on all of $\cP_p(C_T)$ since the result only applies to $\mu^Y$ belonging to the class
\begin{align*}
  \cE^\alpha := \Big\{ \nu\in \cP(C_T) : \int_{C_T} e^{\lambda \| \omega\|_{C_T}^\alpha }\, \nu(\dd \omega)<\infty \text{ for all }\lambda\in\RR\Big\}.
\end{align*}
Nevertheless, this class always includes convex combinations of Diracs and so by Remark \ref{rem:particle} the result transfers to well-posedness of the particle system \eqref{eq:particle-system}.

\begin{cor}\label{cor:MFL-local-lipschitz}
Let $B$ satisfy the above assumptions and fix $p,q\in [1,\infty)$ with $p<q$. Then there exists a constant $C$ such that for any $\mu^Y\in \cE^\alpha$ we have
\begin{equation*}
  d_p(L^N(X^{(N)}(\omega)), \mu^X)) \leq C \, F(L^N(Y^{(N)}(\omega)), \mu^Y)\, d_q(L^N(Y^{(N)}(\omega)), \mu^{Y}) \quad \PP\text{-a.s.}
\end{equation*}
for $F$ given by
\begin{equation*}
  F(\nu^1,\nu^2) := \int_{C_T} e^{C \| \omega\|_{C_T}^\alpha}\, \nu^1(\dd \omega) + \int_{C_T} e^{C \| \omega\|_{C_T}^\alpha}\, \nu^2(\dd \omega).
\end{equation*}
As a consequence, mean field convergence holds along any sequence $\{Y^i\}$ such that
\begin{align*}
  \limsup_{N\to\infty} F(L^N(Y^{(N)}(\omega)),\mu^Y)<\infty, \quad d_q(L^N(Y^{(N)}(\omega)), \mu^Y)\to 0 \quad \PP\text{-a.s.}
\end{align*}
which is true in particular if $\{Y^i\}$ are taken as i.i.d. variables distributed as $\mu^Y\in \cE^\alpha$.
\end{cor}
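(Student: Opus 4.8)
The plan is to obtain the quantitative bound as a direct instance of the transfer principle of Proposition~\ref{prop:abstract-tanaka}, fed with the stability estimate of Proposition~\ref{prop:local-lipschitz-stability}. First I would record that, since $\mu^Y\in\cE^\alpha$ satisfies condition~\ref{it:LocalLipExpMoments} of Assumption~\ref{ass:local-lipschitz}, Proposition~\ref{prop:local-lipschitz-stability} applies and furnishes a unique solution $\mu^X$ associated to $\mu^Y$; this is hypothesis~(i) of Proposition~\ref{prop:abstract-tanaka}. Next, I observe that the stability estimate~\eqref{eq:local lipschitz stability} is exactly of the form~\eqref{eq:tanaka-assumption} with linear modulus $M(r)=Cr$ and with the prefactor $F$ as defined in the statement, the only discrepancy being that the right-hand side controls $d_q$ of the inputs rather than $d_p$. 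The proof of Proposition~\ref{prop:abstract-tanaka} is insensitive to this change: it only uses the identity $L^N(y^{(N)})=\cL_{\PP_N}(y^{(N)})$ on the finite space $(\Omega_N,\cF_N,\PP_N)$ together with the fact that any particle-system solution $X^{(N)}(\omega)$ solves the DDSDE with input $Y^{(N)}(\omega)$ on that space, and it never uses any relation between the two metrics. Running that argument verbatim with $d_q$ in place of the input $d_p$ therefore yields the first displayed inequality.

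Given that inequality, mean field convergence is immediate. The hypotheses $\limsup_{N}F(L^N(Y^{(N)}(\omega)),\mu^Y)<\infty$ and $d_q(L^N(Y^{(N)}(\omega)),\mu^Y)\to0$, holding $\PP$-a.s., force the product on the right-hand side to vanish, so that $d_p(L^N(X^{(N)}(\omega)),\mu^X)\to0$ $\PP$-a.s.; since $d_p$-convergence implies weak convergence, this is precisely $L^N(X^{(N)}(\omega))\rightharpoonup\mu^X$ in the sense of Definition~\ref{defn:MFL}.

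It remains to verify both conditions in the i.i.d. case, which is the only genuinely new computation. As $\mu^Y\in\cE^\alpha$ has finite moments of every order, in particular $\mu^Y\in\cP_q(C_T)$, Lemma~\ref{lem:IIDConverge} gives $d_q(L^N(Y^{(N)}(\omega)),\mu^Y)\to0$ $\PP$-a.s. For the uniform control of $F$, the $\mu^Y$-term is the finite constant $\int_{C_T}e^{C\|\omega\|_{C_T}^\alpha}\mu^Y(\dd\omega)$ (finite since $\mu^Y\in\cE^\alpha$), while the empirical term equals $\frac{1}{N}\sum_{i=1}^N e^{C\|Y^i\|_{C_T}^\alpha}$; the summands are i.i.d.\ and integrable by the same membership $\mu^Y\in\cE^\alpha$, so the strong law of large numbers shows this average converges $\PP$-a.s.\ to the same finite constant. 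Hence $\limsup_N F(L^N(Y^{(N)}(\omega)),\mu^Y)<\infty$ $\PP$-a.s.\ and both hypotheses hold.

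The main obstacle is conceptual rather than computational: one must be sure that Tanaka's transfer argument survives the loss-of-regularity mismatch between the output metric $d_p$ and the input metric $d_q$ with $q>p$, a feature genuinely present in the local Lipschitz estimate~\eqref{eq:local lipschitz stability} and absent from the cleaner Osgood and monotone cases. Once this robustness is noted, the remaining steps---the vanishing of the product and the two i.i.d.\ verifications via Lemma~\ref{lem:IIDConverge} and the strong law---are routine.
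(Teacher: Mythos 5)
Your proposal is correct and follows essentially the same route as the paper: Tanaka's transfer principle (Proposition \ref{prop:abstract-tanaka}) applied with the stability estimate of Proposition \ref{prop:local-lipschitz-stability}, noting that the argument survives the $d_p$/$d_q$ mismatch, followed by the vanishing-product argument of Corollary \ref{cor:tanaka-suff} and the i.i.d.\ verification via Lemma \ref{lem:IIDConverge} and the strong law of large numbers. In fact you spell out the readaptation to $p<q$ more explicitly than the paper, which merely asserts it.
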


\begin{proof}
The first statement is a consequence of Propositions \ref{prop:local-lipschitz-stability} and \ref{prop:abstract-tanaka}; technically here contrary to \eqref{eq:tanaka-assumption} we have $p<q$, but the proof can be readapted to cover this case.
The second statement is analogous to Corollary \ref{cor:tanaka-suff} and so it only remains to cover the i.i.d. claim.
Convergence of $L^N(Y^{(N)})$ to $\mu^Y$ in $\cP_q$ follows from Lemma \ref{lem:IIDConverge}; we have
\begin{align*}
  F(L^N(Y^{(N)}(\omega)), \mu^{Y^1}) = \frac{1}{N} \sum_{i=1}^N e^{C \| Y^i\|_{C_T}^\alpha} + \int_{C_T} e^{C \| \omega\|_{C_T}^\alpha} \, \mu^{Y^1}(\dd \omega)
\end{align*}
and by SLLN it holds
\begin{align*}
  \lim_{N\to\infty} \frac{1}{N} \sum_{i=1}^N e^{C \| Y^i\|_{C_T}^\alpha} = \int_{C_T} e^{C \| \omega\|_{C_T}^\alpha} \, \mu^{Y^1}(\dd \omega)\quad \PP\text{-a.s.}
\end{align*}
implying the conclusion.
\end{proof}

\subsubsection{Convolutional drifts}

Let us now assume $B=b\ast\mu$ for $b$ satisfying the assumptions of Proposition \ref{prop:stability-sobolev-DDSDE}; recall in particular that $q>d$ and $L^1_T W^{1,q}_x \hookrightarrow L^1_T C^\alpha_x$ for some $\alpha >0$. As a consequence, the equation the particle system is always meaningful and existence of weak solutions to \eqref{eq:particle-system} holds by standard arguments; uniqueness however does not need to hold.

Recall that here the input is of the form $Y=\xi+W$, for independent $\xi,W$ with $\cL(\xi)\in L^p_x$; if we also considered $Y^{(N)}$ to be of the form $Y^i=\xi^i+W^i$, one would then invoke the more refined results from \cite{caravenna2021directional} to show that there exists a unique solution to the particle system \eqref{eq:particle-system} for $\PP$-a.e. realisation of $W^{(N)}(\omega)$ and Lebesgue-a.e. choice of initial data $\xi^{(N)}\in \RR^{Nd}$. However we will not need this fact here and we will therefore not impose such an assumption on $Y^{(N)}$.

\begin{cor}\label{cor:MFL-convol}
Let $b,\,\rho$ satisfy the assumptions of Proposition \ref{prop:stability-sobolev-DDSDE} and denote by $\mu^X$ the unique solution to the DDSDE \eqref{eq:ConvMckeanSDE} associated to $\mu^Y=\mu^{\xi+W}$. Then for any input $Y^{(N)}$ and any solution $X^{(N)}$ to the associated particle system \eqref{eq:particle-system} it holds that
\begin{equation*}
  d_{r'}(L^N(X^{(N)}(\omega)), \mu^{X^1}) \leq C d_{r'}(L^N(Y^{(N)}(\omega)), \mu^{Y}))\quad \PP\text{-a.s.}
\end{equation*}
for the parameter $r'\in (1,\infty)$ as defined in Proposition \ref{prop:stability-sobolev-DDSDE} and given by the relation
\begin{align*}
  \frac{1}{r'}=1-\frac{q'}{p},\quad \frac{1}{q}+\frac{1}{q'}=1.
\end{align*}
We deduce that mean field convergence holds along any sequence $\{Y^i\}$ such that $\PP$-a.s.
\begin{align*}
  d_{r'}(L^N(Y^{(N)}(\omega)), \mu^{Y^1})\to 0
\end{align*}
which includes the case of $\{Y^i\}$ i.i.d. distributed as $\mu^{\xi+W}\in \cP_{r'}(C_T)$.
\end{cor}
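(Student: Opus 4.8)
The plan is to read the stability estimate \eqref{eq:stability-sobolev-DDSDE} of Proposition \ref{prop:stability-sobolev-DDSDE} as a special case of the abstract hypothesis \eqref{eq:tanaka-assumption}, then invoke the transfer principle of Proposition \ref{prop:abstract-tanaka}, and finally apply Corollary \ref{cor:tanaka-suff} together with Lemma \ref{lem:IIDConverge}. Concretely, I would take $p=r'$, fix the reference input $\mu^{Y^1}=\mu^{\xi+W}$ with its unique associated solution $\mu^{X^1}=\mu^X$ (well defined via Theorem \ref{thm:exist-uniq-DDSDE}), and choose the \emph{constant} function $F\equiv C$ together with the linear modulus $M(r)=r$, where $C$ is the constant supplied by Proposition \ref{prop:stability-sobolev-DDSDE}. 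With these choices estimate \eqref{eq:stability-sobolev-DDSDE} is verbatim \eqref{eq:tanaka-assumption}, so that hypotheses \textit{i.} and \textit{ii.} of Proposition \ref{prop:abstract-tanaka} hold.

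Before applying the transfer principle I would record two preliminary points. First, since $q>d$ the Sobolev embedding gives $b\in L^1_T C^0_b$, so the drift in \eqref{eq:particle-system} is continuous and the particle system is pathwise meaningful; weak solutions $X^{(N)}$ therefore exist by standard arguments, and no uniqueness for them is needed. Second, I must check that the empirical measures are admissible competitors: both $L^N(Y^{(N)}(\omega))$ and $L^N(X^{(N)}(\omega))$ are finitely supported convex combinations of Dirac masses on $C_T$, hence lie in $\cP_{r'}(C_T)$, so the right-hand side of \eqref{eq:stability-sobolev-DDSDE} is finite and meaningful. The decisive structural feature is that Proposition \ref{prop:stability-sobolev-DDSDE} is \emph{asymmetric}: it imposes the regularity $\sup_t\|\mu^1_t\|_{L^p_x}<\infty$ only on the reference solution $\mu^{X^1}=\mu^X$, while the competitor $\mu^{X^2}$ is allowed to be \emph{any} solution with \emph{any} input. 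This is exactly what permits the choice $\mu^{X^2}=L^N(X^{(N)}(\omega))$, whose empirical law carries no density or integrability beyond finiteness of its $r'$-moment.

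With these remarks in place, Proposition \ref{prop:abstract-tanaka} yields \eqref{eq:tanaka-conclusion}, which with $F\equiv C$ and $M(r)=r$ reads
\[
d_{r'}\big(L^N(X^{(N)}(\omega)),\mu^{X}\big)\leq C\,d_{r'}\big(L^N(Y^{(N)}(\omega)),\mu^{Y}\big)\qquad\PP\text{-a.s.},
\]
which is the asserted bound. For the mean field convergence I would invoke Corollary \ref{cor:tanaka-suff}: since $F\equiv C$ is constant, the requirement $\limsup_N F<\infty$ is automatic, and it remains only to ensure $d_{r'}(L^N(Y^{(N)}(\omega)),\mu^Y)\to 0$ $\PP$-a.s., which is precisely the stated hypothesis. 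Finally, for i.i.d. data distributed as $\mu^{\xi+W}\in\cP_{r'}(C_T)$, Lemma \ref{lem:IIDConverge} applied with $E=C_T$ and $p=r'$ delivers exactly this almost sure convergence in $d_{r'}$, completing the argument.

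I do not expect a genuine obstacle at this stage: the analytic core — the one-sided Lipschitz and maximal-function stability against an arbitrary competitor — is already contained in Proposition \ref{prop:stability-sobolev-DDSDE}. The only point demanding care is confirming that the abstract hypotheses of Proposition \ref{prop:abstract-tanaka} are met with a \emph{constant} $F$, so that Corollary \ref{cor:tanaka-suff} reduces the entire statement to convergence of the empirical inputs; it is here that the asymmetry of the stability estimate highlighted above is indispensable, since the particle empirical measure $L^N(X^{(N)})$ need not enjoy any of the regularity imposed on $\mu^X$.
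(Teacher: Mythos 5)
Your proposal is correct and matches the paper's own argument, which simply combines Proposition \ref{prop:abstract-tanaka} (with $F\equiv C$, $M(r)=r$, $p=r'$) and Proposition \ref{prop:stability-sobolev-DDSDE}, concluding via Corollary \ref{cor:tanaka-suff} and Lemma \ref{lem:IIDConverge} exactly as you do. Your explicit remarks on the asymmetry of the stability estimate and on $q>d$ making the particle system meaningful for atomic empirical measures are precisely the points the paper relies on (they appear in the proof of Proposition \ref{prop:stability-sobolev-DDSDE}), so nothing is missing.
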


\begin{proof}
The proof follows the same line of arguments as the previous corollaries, this time based on Propositions \ref{prop:abstract-tanaka} and \ref{prop:stability-sobolev-DDSDE}.
\end{proof}

In particular Corollary \ref{cor:MFL-convol} holds in the case $\mu^{\xi+W}\in \cap_{n\in \NN} \cP_n(C_T)$, completing the proof of Theorem \ref{thm:intro-main}.

\appendix
\section{Approximation in metric spaces}\label{app:approximation}
Our main result from Section \ref{subsec:ContDrift} relies on approximating suitable drifts $B_t(\mu,x)$ by more regular ones. For this reason, we provide here an abstract result on approximation by Lipschitz functions in metric spaces $(E,d)$; although our main focus is given by $E=\RR^d\times \cP_p(\RR^d)$, the result is true in general.

Standards results of this kind can be found in \cite[Theorem 6.8]{heinonen2012lectures} and \cite[Section 6]{cobzacs2019lipschitz}; however, as they are not directly suited for our purpose, we present the necessary modifications below.
\begin{ass}\label{ass:TimeInHomLG}
Given a metric space $(E,d)$, we say that a measurable map $f :\RR_+\times E \rightarrow \RR^d$, has time inhomogeneous, linear growth if there exists a locally integrable function $h:\RR_+\rightarrow \RR_+$ and some (w.l.o.g. any) $z_0\in E$ such that
\begin{equation}
  |f_t(z)| \leq h_t\, (1+d(z,z_0)) \quad \forall\, t\geq 0, \, z\in E.
\end{equation}
\end{ass}
In order to prove Proposition \ref{prop:TimeInhomLipApprox} below, we first need the following lemma.
\begin{lem}\label{lem:approx lipschitz functions}
Let $g:E\to\RR$ be a bounded map, uniformly continuous on bounded sets. Define
\begin{equation}\label{eq:approxBaireLipschitz}
g^n(z) = \inf_{w\in E} \big\{g(w)+n d_E(z,w)\big\};
\end{equation}
the sequence $\{g^n\}_n$ has the following properties:
\begin{enumerate}[label=\roman*.]
\item \label{it:BddApproxLip} $g^n$ is $n$-Lipschitz;
\item \label{it:BddApproxMonotone} $g^n(z)\leq g^{n+1}(z) \leq g(z)$ and $|g^n(z)|\leq \| g\|_{C^0}$ for all $z\in E$ and $n\in \NN$;
\item \label{it:BddApproxConverge} $g^n\to g$ uniformly on bounded sets.
\end{enumerate}
\end{lem}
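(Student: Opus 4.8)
The plan is to verify the three properties in order, handling \ref{it:BddApproxLip} and \ref{it:BddApproxMonotone} as routine consequences of the infimal-convolution structure and reserving the real work for the uniform convergence in \ref{it:BddApproxConverge}.

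First I would record that each $g^n$ is well-defined and real-valued: taking $w=z$ in \eqref{eq:approxBaireLipschitz} gives $g^n(z)\le g(z)\le \|g\|_{C^0}$, while $g(w)+n\,d_E(z,w)\ge -\|g\|_{C^0}$ for every $w$ gives $g^n(z)\ge -\|g\|_{C^0}$; this already yields the bound $|g^n(z)|\le \|g\|_{C^0}$ asserted in \ref{it:BddApproxMonotone}. For \ref{it:BddApproxLip}, fix $z,z'\in E$; for any $w$ the triangle inequality gives $g(w)+n\,d_E(z,w)\le g(w)+n\,d_E(z',w)+n\,d_E(z,z')$, and taking the infimum over $w$ yields $g^n(z)\le g^n(z')+n\,d_E(z,z')$; exchanging $z$ and $z'$ proves the $n$-Lipschitz estimate. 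The remaining claims in \ref{it:BddApproxMonotone} are immediate: since $d_E\ge 0$, the bracket in \eqref{eq:approxBaireLipschitz} is nondecreasing in $n$ for each fixed $w$, so $g^n\le g^{n+1}$ by monotonicity of the infimum, and $g^{n+1}(z)\le g(z)$ again by the choice $w=z$.

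The heart of the matter is \ref{it:BddApproxConverge}, and the key observation is that the infimum defining $g^n(z)$ is effectively localized to a ball of radius $O(1/n)$ about $z$. Since $g^n(z)\le g(z)$ and any competitor with $d_E(z,w)>2\|g\|_{C^0}/n$ satisfies $g(w)+n\,d_E(z,w)> -\|g\|_{C^0}+2\|g\|_{C^0}\ge g(z)$, such $w$ never affect the infimum; hence, writing $\delta_n:=2\|g\|_{C^0}/n$, one has $g^n(z)=\inf\{g(w)+n\,d_E(z,w): d_E(z,w)\le \delta_n\}\ge \inf\{g(w): d_E(z,w)\le\delta_n\}$. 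I would then fix a bounded set $B\subseteq \bar B(z_0,R)$ and enlarge it to $B':=\bar B(z_0,R+2\|g\|_{C^0})$, which is still bounded and contains every competitor point $w$ arising above whenever $z\in B$ and $n\ge 1$. Letting $\eta$ be a modulus of uniform continuity for $g$ on $B'$, so that $|g(z)-g(w)|\le \eta(d_E(z,w))$ with $\eta(\delta)\to 0$ as $\delta\to 0$, the localization bound gives, for every $z\in B$,
\[
0\le g(z)-g^n(z)\le \sup\{g(z)-g(w): d_E(z,w)\le \delta_n\}\le \eta(\delta_n).
\]
Since $\delta_n\to 0$, this yields $\sup_{z\in B}|g(z)-g^n(z)|\le \eta(\delta_n)\to 0$, which is precisely uniform convergence on $B$.

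The only genuine obstacle is ensuring that all competitor points $w$ remain in a single fixed bounded set on which $g$ is uniformly continuous — otherwise the pointwise moduli of continuity could degenerate and the rate would fail to be uniform over $B$. The enlargement from $B$ to $B'$, together with the localization of the infimum to distance $\le \delta_n\le 2\|g\|_{C^0}$, is exactly what makes this work, and it is where the hypothesis that $g$ is uniformly continuous on bounded sets (rather than merely continuous) is used.
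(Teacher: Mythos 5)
Your proof is correct and takes essentially the same route as the paper: the key step for uniform convergence is identical, namely localizing the infimum defining $g^n(z)$ to the ball of radius $2\|g\|_{C^0}/n$ around $z$ (using boundedness of $g$ and the competitor $w=z$), and then invoking the uniform continuity of $g$ on a slightly enlarged bounded set to get a rate $\eta(2\|g\|_{C^0}/n)$ uniform over the original set. The only differences are cosmetic: you prove items (i)--(ii) directly via the triangle inequality where the paper cites \cite[Theorem 6.4.1]{cobzacs2019lipschitz}, and you bound $g^n(z)$ from below by $\inf\{g(w): d_E(z,w)\le \delta_n\}$ instead of the paper's argument with $\eps$-near-minimizers $w_n$.
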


\begin{proof}
Let us set $K=\| g\|_{C^0}$; we can assume $K\neq 0$. Point \ref{it:BddApproxLip} and the first part of \ref{it:BddApproxMonotone} follow from \cite[Theorem 6.4.1]{cobzacs2019lipschitz}; by the fact that $g(w)+n d_E(z,w) \geq g(w)\geq -K$ we deduce $g^n(z)\geq -K$ and thus $|g^n(z)|\leq K$ for all $z\in E$ and $n\in\NN$. The proof of \ref{it:BddApproxConverge} is mostly the same as \cite[Theorem 6.4.1]{cobzacs2019lipschitz}, which however only deals with globally uniformly continuous functions; for this reason we give it explicitly.

Let us take $n$ big enough so that $2K/n< 1$ and fix $z_0\in E$, $R>0$; let $m_R$ denote the modulus of continuity of $g$ on $B_{R+1}(z_0)$, which exists by assumption. For any $w\in E$ such that $d(z,w)\geq 2K/n$ it holds that
\begin{align*}
g(z) + n d_E(z,z) = g(z) \leq K = 2K-K \leq g(w)+ n d_E(z,w)
\end{align*}
which implies that
\begin{align*}
g^n(z) = \inf \big\{g(w)+n d_E(z,w) : w\in E, d_E(w,z)\leq 2K/n\big\}.
\end{align*}
In particular if $z\in B_R(z_0)$, since $2K/n<1$, the infimum is taken over a subset of $B_{R+1}(z_0)$. Fix $\eps>0$, $z\in B_R(z_0)$; we can find $w_n$ s.t. $d_E(w_n,z)\leq 2K/n$ and $g(w_n) + n d_E(w_n,z) \leq g^n(z)+\eps$. Thus
\begin{align*}
g(w_n)-g^n(z) \leq g(w_n)-g^n(z) + n d_E(w_n,z) \leq \eps
\end{align*}
and
\begin{align*}
0 \leq g(z)-g^n(z) = g(z)\mp g(w_n) -g^n(z) \leq m_R \Big(\frac{2K}{n}\Big) +\eps \to \eps \quad\text{ as } n\to\infty.
\end{align*}
It follows that for all $n$ large enough,
\begin{align*}
\sup_{z\in B_R(z_0)} |g(z)-g^n(z)| \leq 2\eps;
\end{align*}
by the arbitrariness of $\eps>0$, uniform convergence on bounded sets follows.
\end{proof}
Applying Lemma \ref{lem:approx lipschitz functions} to time inhomogeneous functions gives the following result.
\begin{prop}\label{prop:TimeInhomLipApprox}
Let $f:[0,T]\times E\rightarrow \RR^d$ be as in Assumption \ref{ass:TimeInHomLG} and such that, for every $t\in [0,T]$, $f_t(\,\cdot\,)$ is uniformly continuous on bounded sets. Then there exists a sequence of functions $\{f^n\}_{n}:[0,T]\times E\rightarrow \RR^d$ such that:
\begin{enumerate}[label=\roman*.]
  \item For every $n \in \NN$ and $t \in [0,T]$, $f^n_t(\,\cdot\,)$ is globally Lipschitz with constant $C=C(n)>0$ independent of $t$.\label{it:LipBound}
  \item For every $n \in \NN$, $f^n_t(\,\cdot\,)$ has time inhomogeneous linear growth, in the sense of Assumption \ref{ass:TimeInHomLG}, with growth function $h_t$.\label{it:Growth}
  \item For every $t\in [0,T]$, $f^n_t(\,\cdot\,)\rightarrow f_t(\,\cdot\,)$ uniformly on bounded sets.\label{it:Converge}
\end{enumerate}
\end{prop}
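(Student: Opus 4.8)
The plan is to reduce to the scalar, bounded-slope setting in which the inf-convolution of Lemma~\ref{lem:approx lipschitz functions} can be used, and then recombine the components. First I would argue componentwise: writing $f_t=(f^{(1)}_t,\dots,f^{(d)}_t)$, each scalar map $f^{(j)}_t$ inherits from Assumption~\ref{ass:TimeInHomLG} the bound $|f^{(j)}_t(z)|\le h_t(1+d(z,z_0))$ and, for each fixed $t$, uniform continuity on bounded sets. It then suffices to construct scalar approximants $f^{n,(j)}_t$ enjoying properties \ref{it:LipBound}--\ref{it:Converge} and to set $f^n_t:=(f^{n,(1)}_t,\dots,f^{n,(d)}_t)$; passing to the Euclidean norm produces the stated bounds up to a harmless dimensional factor $\sqrt d$, which may be absorbed into the growth function and into the Lipschitz constant $C(n)$.

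The key device is a truncation at the linear envelope, which is what allows Lemma~\ref{lem:approx lipschitz functions} to be invoked despite the unbounded growth. For fixed $n,t$ I would cap
\[
\hat f^{(j)}_{t,n}(w):=\max\Big\{-n(1+d(w,z_0)),\ \min\big\{\,n(1+d(w,z_0)),\,f^{(j)}_t(w)\big\}\Big\},
\]
so that $|\hat f^{(j)}_{t,n}(w)|\le \min(h_t,n)(1+d(w,z_0))$, i.e. the capped map has linear slope at most $n$. I then define the inf-convolution \eqref{eq:approxBaireLipschitz},
\[
f^{n,(j)}_t(z):=\inf_{w\in E}\big\{\hat f^{(j)}_{t,n}(w)+n\,d(z,w)\big\}.
\]
Since the capped map has slope at most $n$, the triangle inequality gives $\hat f^{(j)}_{t,n}(w)+n\,d(z,w)\ge -n(1+d(z,z_0))$, so the infimum is finite; being an infimum of $n$-Lipschitz functions it is itself $n$-Lipschitz, which is property~\ref{it:LipBound} with $C(n)=n$ independent of $t$. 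Taking $w=z$ for the upper bound and the refined version of the same estimate for the lower bound yields $|f^{n,(j)}_t(z)|\le \min(h_t,n)(1+d(z,z_0))\le h_t(1+d(z,z_0))$, which is exactly property~\ref{it:Growth} with the same $h$.

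For property~\ref{it:Converge} I would fix $t$ and only track the range $n\ge h_t$: for such $n$ the cap is inactive, since $|f^{(j)}_t(w)|\le h_t(1+d(w,z_0))\le n(1+d(w,z_0))$, so $\hat f^{(j)}_{t,n}=f^{(j)}_t$ and $f^{n,(j)}_t$ is the genuine inf-convolution of $f^{(j)}_t$. The convergence argument in the proof of Lemma~\ref{lem:approx lipschitz functions} then carries over: on a ball $B_R(z_0)$, any competitor $w$ with $d(z,w)>\rho_n:=2h_t(1+R)/(n-h_t)$ cannot beat the value $f^{(j)}_t(z)$, so the infimum is effectively restricted to the shrinking ball $B_{\rho_n}(z)$, and uniform continuity of $f^{(j)}_t$ on $B_{R+1}(z_0)$ forces $0\le f^{(j)}_t(z)-f^{n,(j)}_t(z)\le m_R(\rho_n)\to0$ uniformly over $z\in B_R(z_0)$.

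The main obstacle is the tension between the uniform-in-$t$ Lipschitz constant required by property~\ref{it:LipBound} and the sharp, genuinely $t$-dependent and possibly unbounded envelope $h_t$ required by property~\ref{it:Growth}. A direct inf-convolution of $f^{(j)}_t$ with slope $n$ equals $-\infty$ at every time with $h_t>n$, while a naive spatial cutoff would destroy the sharp $h_t(1+d)$ bound by inserting an $n$-dependent constant in front of $h_t$. Truncating at the linear envelope resolves both issues simultaneously: it restores finiteness for every $n$ without enlarging the growth function, and since we only need the approximation to improve as $n\to\infty$ — by which stage $n\ge h_t$ and the cap switches off — no uniformity in $t$ of the convergence is needed, only of the Lipschitz constant and of the envelope.
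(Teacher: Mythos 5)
Your proof is correct, but it takes a genuinely different route from the paper's. The paper first normalizes: it sets $g_t(z):=f_t(z)/(1+d_E(z,z_0))$, which is bounded by $h_t$ and uniformly continuous on bounded sets, applies Lemma \ref{lem:approx lipschitz functions} to $g_t$, and then defines $f^n_t(z)=g^n_t(z)\,(1+d_E(z,z_0))\,\rho(d_E(z,z_0)/n)$ with a smooth radial cutoff $\rho$ to keep the product Lipschitz. You instead truncate $f_t$ at the linear envelope $\pm n(1+d_E(\cdot,z_0))$ and inf-convolve directly, with no normalization and no cutoff; your convergence step is the same mechanism as in the proof of Lemma \ref{lem:approx lipschitz functions} (restrict the infimum to a shrinking ball, then use the modulus of continuity on $B_{R+1}(z_0)$), rerun with radius $2h_t(1+R)/(n-h_t)$ in place of $2K/n$, and I checked your finiteness, Lipschitz and two-sided growth bounds — they all go through via the triangle inequality exactly as you indicate. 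Your variant buys two things. First, the Lipschitz constant is literally $n$ (up to a dimensional factor), manifestly uniform in $t$; in the paper's product construction the natural estimate for $\mathrm{Lip}(f^n_t)$ contains $\|g^n_t\|_{C^0}\le h_t$, so item \emph{i.}\ there really comes out as $C(n)(1+h_t)$ unless $h$ is bounded — still integrable in $t$ and enough for the application in Proposition \ref{prop:GenExistenceUnderGrowth}, but your construction delivers the statement exactly as written. Second, your envelope $\min(h_t,n)(1+d_E(z,z_0))$ is slightly sharper than \emph{ii.} What the paper's route buys in exchange is that Lemma \ref{lem:approx lipschitz functions} is reused verbatim as a black box, including the monotonicity $g^n\le g^{n+1}\le g$. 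Two minor points you share with the paper: the inf-convolution \eqref{eq:approxBaireLipschitz} is a scalar device, so both arguments are implicitly componentwise in $\RR^d$ and pick up a harmless $\sqrt{d}$ in the growth function (as you note, absorb it into $h$); and if $f^n$ is to serve as a drift one should observe joint measurability, which holds since $E$ is separable in the intended case $E=\RR^d\times\cP_p(\RR^d)$, the infimum in your definition may be taken over a countable dense set, and $f^n_t$ is continuous in $z$.
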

\begin{proof}
Define the map
\begin{equation*}
  g_t(z) := \frac{f_t(z)}{(1+d_E(z,z_0))};
\end{equation*}
for every $t\in [0,T]$, by assumption $g_t(\,\cdot\,)$ is bounded and uniformly continuous on bounded sets, so we can apply Lemma \ref{lem:approx lipschitz functions}; this way we obtain a sequence $g^n_t(x)$, defined for fixed $t$ by equation \eqref{eq:approxBaireLipschitz}. Next, consider $\rho \in C^\infty_c(\RR;[0,1])$ such that $\rho(x)\equiv 1$ for $|x|\leq 1$ and $\rho(x)\equiv 0$ for $|x|\geq 2$, and set
\begin{equation*}
  f^n_t(z) := g^n_t(z)\left(1+ d_E(z,z_0) \right) \rho \left(\frac{d_E(z,z_0)}{n} \right).
\end{equation*}

We claim that $f^n$ has the desired properties. Indeed, since $|g^n_t(z)|\leq |g_t(z)|\leq h_t$, we also have $|f^n_t(z)|\leq |g^n_t(z)|(1+d_E(z,z_0))\leq h_t (1+d_E(z,z_0))$, so that \ref{it:Growth} is satisfied. Item \ref{it:LipBound} is clear from the construction of $f^n$, while the convergence of item \ref{it:Converge} follows from \ref{it:BddApproxConverge} of Lemma \ref{lem:approx lipschitz functions}.
\end{proof}
\section{Analytic lemmas and inequalities involving maximal functions}\label{app:maximal}
We start by presenting a useful weak-strong convergence lemma.
\begin{lem}\label{lem:weak-conv-L1}
Let $X^n$ be a sequence of $\RR^d$-valued r.v. defined on the same probability space such that $X^n\to X$ in probability; assume that $\mu^n:=\cL(X^n), \mu:= \cL(X)$ all belong to $L^1_x$ and $\mu^n\rightharpoonup \mu$ weakly in $L^1_x$. Finally, consider a sequence $b^n\in L^\infty_x$ such that $\| b^n\|_{L^\infty_x} \leq C$ and $b^n(x)\to b(x)$ for Lebesgue-a.e. $x$. Then it holds that
\[
\lim_{n\to \infty} \EE[|(b^n\ast \mu^n)(X^n)- (b\ast\mu)(X)|] = 0.
\]
\end{lem}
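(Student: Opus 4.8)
The plan is to split the quantity into a piece controlled by integrating against the law $\mu^n$ of $X^n$ and a piece controlled by the regularity of the limiting convolution. Write $f^n:=b^n\ast\mu^n$ and $g:=b\ast\mu$. Since $\mu^n,\mu$ are probability densities and $\|b^n\|_{L^\infty_x}\le C$, both $f^n$ and $g$ are bounded by $C$, and $g=b\ast\mu$ is moreover uniformly continuous: the modulus estimate $|g(x)-g(x')|\le\|b\|_{L^\infty_x}\,\|\mu(\,\cdot\,)-\mu(\,\cdot\,-(x-x'))\|_{L^1_x}$ tends to $0$ by continuity of translations in $L^1_x$, so $g\in C^0_b$. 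I would first record two soft facts about the densities: by the Dunford--Pettis theorem, weak convergence $\mu^n\rightharpoonup\mu$ in $L^1_x$ makes $\{\mu^n\}$ uniformly integrable, and testing this weak convergence against indicators of large balls shows $\{\mu^n\}$ is also tight.

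Using that $X^n$ has law $\mu^n$, I would then bound
\[
\EE[|f^n(X^n)-g(X)|]\le \int_{\RR^d}|f^n-g|\,\mu^n\,\dd x + \EE[|g(X^n)-g(X)|].
\]
The second term vanishes because $g\in C^0_b$ and $X^n\to X$ in probability, so $g(X^n)\to g(X)$ in probability and, being bounded, in $L^1_\Omega$. For the first term the workhorse is the elementary fact that if $h^n\to0$ Lebesgue-a.e. with $\sup_n\|h^n\|_{L^\infty_x}<\infty$ and $\{\nu^n\}$ is a uniformly integrable, tight family of probability densities, then $\int h^n\nu^n\,\dd x\to0$. This follows from a three-piece estimate: tightness kills the integral outside a large ball $B_R$, Egorov's theorem handles the subset of $B_R$ where $h^n\to0$ uniformly, and uniform integrability handles the small exceptional set left by Egorov.

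To treat $\int|f^n-g|\,\mu^n$ I would decompose $f^n-g=(b^n-b)\ast\mu^n + b\ast(\mu^n-\mu)$. The term $b\ast(\mu^n-\mu)$ converges to $0$ pointwise (weak $L^1_x$ convergence tested against the fixed bounded function $b(x-\,\cdot\,)$) and is bounded by $2C$, so the workhorse applies with $\nu^n=\mu^n$. The genuinely delicate term, and the main obstacle, is the cross term $(b^n-b)\ast\mu^n$, in which \emph{both} factors depend on $n$: $b^n-b\to0$ only a.e. (strong but not uniform), while $\mu^n$ converges only weakly, so no weak--strong pairing is directly available. The key trick is to integrate first and pass to the autocorrelation: with $\check\mu^n(z):=\mu^n(-z)$ and $\rho^n:=\mu^n\ast\check\mu^n$ (the density of $X^n-\tilde X^n$ for an independent copy $\tilde X^n$ of $X^n$), Fubini gives
\[
\int_{\RR^d}\big|(b^n-b)\ast\mu^n\big|\,\mu^n\,\dd x \le \int_{\RR^d}|b^n-b|\,\rho^n\,\dd x.
\]
Here $|b^n-b|\to0$ a.e. and is bounded by $2C$, and the decisive observation is that $\{\rho^n\}$ inherits uniform integrability and tightness from $\{\mu^n\}$: uniform integrability follows from translation-invariance of Lebesgue measure, since $\int_A\rho^n\,\dd x=\int_{\RR^d}\mu^n(y)\big(\int_{A+y}\mu^n\,\dd u\big)\dd y\le \eta(|A|)$, where $\eta$ is the (uniform in $n$) modulus of uniform integrability of $\{\mu^n\}$, while tightness follows from $\rho^n(\{|w|>R\})\le 2\,\mu^n(\{|x|>R/2\})$. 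Applying the workhorse with $h^n=|b^n-b|$ and $\nu^n=\rho^n$ yields $\int|b^n-b|\,\rho^n\to0$, which closes the estimate on the first term and hence completes the proof.
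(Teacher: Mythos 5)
Your proof is correct, and its skeleton coincides with the paper's: the same three-way decomposition (cross term $(b^n-b)\ast\mu^n$ against $\mu^n$, the term $b\ast(\mu^n-\mu)$ against $\mu^n$, and the continuity term $\EE[|(b\ast\mu)(X^n)-(b\ast\mu)(X)|]$), the same use of Dunford--Pettis to extract uniform integrability and tightness of $\{\mu^n\}$, and the same Egorov-based ``workhorse'' lemma. The one genuine difference is how the cross term, where both factors depend on $n$, is dispatched. The paper iterates its classical fact twice: first it shows $(|b^n-b|\ast\mu^n)(x)\to 0$ \emph{pointwise} in $x$ (applying the Egorov argument for each fixed $x$ to $y\mapsto|b^n-b|(x-y)$), and then applies the same fact again with $f^n:=|b^n-b|\ast\mu^n$ in place of $|b^n-b|$ to conclude $\int f^n\,\dd\mu^n\to 0$. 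You instead collapse the double integral via Tonelli onto the autocorrelation density $\rho^n=\mu^n\ast\check\mu^n$ and apply the workhorse once, after checking that $\{\rho^n\}$ inherits uniform integrability (by translation invariance of Lebesgue measure) and tightness (by the union bound for $X^n-\tilde X^n$) from $\{\mu^n\}$. The paper's route is shorter and introduces no new objects, but leans on the slightly delicate observation that the lemma can be nested; your route costs the verification of the two inheritance properties, but it is quantitative and self-contained, requiring only a single application of the Egorov argument, and the inheritance estimates for $\rho^n$ are clean and of independent use. Both resolve the same obstruction correctly.
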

\begin{proof}
We split the estimate as follows:
\begin{align*}
  \EE[|(b^n\ast \mu^n)(X^n)- (b\ast \mu)(X)|]
  & \leq \EE[(|b^n-b|\ast \mu^n)(X^n)] + \EE[|b\ast (\mu^n-\mu)|(X^n)]\\
  & \quad + \EE[|(b\ast \mu)(X^n)-(b\ast \mu)(X)]\\
  & =: I_1^n+ I_2^n+ I_3^n.
\end{align*}

To handle $I^n_1$, we recall the following classical fact, which can be checked using Egorov's theorem: since $|b^n-b|$ is a uniformly bounded sequence converging Lebesgue a.e. to $0$ and $\mu^n$ is equiintegrable, $|b^n-b|\ast \mu^n$ converges pointwise to $0$; but then iterating the procedure to $f^n:=|b^n-b|\ast \mu^n$ in place of $|b^n-b|$, we can conclude that $I_1^n=\int f^n\, \dd \mu^n\to 0$ as well.

Convergence of $I_2^n$ follows from similar arguments.

Finally, since $b\in L^\infty_x$ and $\mu\in L^1_x$, $b\ast \mu\in C^0_b$ and so convergence of $I^n_3$ follows from the uniform bounds and the convergence $X^n\to X$ in probability. 
\end{proof}
The remainder of this appendix is devoted to recalling some classical and more recent properties of maximal functions and their use in estimates involving Wasserstein distances.

Maximal functions play a fundamental role in ODEs driven by Sobolev drifts, see for instance \cite{crippa2008estimates} for their application; let us recall some fundamental facts, which can be found in \cite{stein1970singular}.  

Given $b\in L^p(\RR^d)$, $p\in [1,\infty]$, its maximal function $M b$ is defined by
\[
M b(x) := \sup_{r>0} \frac{1}{\lambda_d\, r^d} \int_{B(x,r)} |b(y)|\,\dd y
\]
where $\lambda_d$ stands for the Lebesgue measure of $B(0,1)$ in $\RR^d$. It is well known that if $p\in (1,\infty]$, then $M f\in L^p(\RR^d)$ and
\[
\|Mb \|_p\leq c_{d,p} \|b\|_p
\]
for some constant $c_{d,p}>0$; similar definition and properties hold in the case of vector-valued drifts $b\in L^p(\RR^d;\RR^m)$ (in which case $c=c_{d,p,m}$).

If $b\in W^{1,p}(\RR^d;\RR^d)$, then there exists a Lebesgue-negligible set $N\subset\RR^d$ and a constant $c_d>0$ such that the Hajlasz inequality holds:
\begin{equation}\label{eq: maximal function1}
|b(x)-b(y)|\leq c_d\, |x-y|\,( M Db(x) + MDb(y))\quad \forall\, x,y\in \RR^d\setminus N.
\end{equation}

The above and similar inequalities allows one to control the map $\mu\mapsto b\ast \mu$ in Wasserstein spaces; these results are relevant for applications in Section \ref{sec:convol}.
\begin{lem}\label{lem: maximal function1}
Let $(p,q,r)\in (1,\infty)^3$ be such that $b\in W^{1,p}_x$, $\mu,\nu \in L^q_x$ and
\begin{align*}
  \frac{r}{p}+\frac{1}{q}\leq 1.
\end{align*}
Then there exists a constant $C=C(d,p,q,r)$ such that
\begin{equation*}
\| b\ast(\mu-\nu) \|_\infty \leq C \| b\|_{W^{1,p}_x}\, \big(\| \mu\|_{L^q_x}^{1/r}+\|\nu\|_{L^q_x}^{1/r}\big)\, d_{r'}(\mu,\nu)
\end{equation*}
\end{lem}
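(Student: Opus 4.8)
The plan is to estimate $|b\ast(\mu-\nu)(x)|$ pointwise in $x$ by transporting mass along an optimal coupling and then invoking the Hajlasz maximal inequality \eqref{eq: maximal function1}. First I would fix $x\in\RR^d$ and let $m\in\Pi(\mu,\nu)$ be an optimal coupling for $d_{r'}(\mu,\nu)$, whose existence is guaranteed by \cite[Theorem 4.1]{villani2008optimal}. Writing $b\ast\mu(x)=\int_{\RR^d} b(x-y)\,\mu(\dd y)$ and similarly for $\nu$, the coupling lets me express the difference as a single integral,
\[
b\ast(\mu-\nu)(x)=\int_{\RR^{2d}}\big(b(x-y_1)-b(x-y_2)\big)\,m(\dd y_1,\dd y_2).
\]
Since $\mu$ and $\nu$ are absolutely continuous, for fixed $x$ the exceptional Lebesgue-null set $N$ appearing in \eqref{eq: maximal function1} is avoided by $x-y_1$ and $x-y_2$ for $m$-a.e. $(y_1,y_2)$, so I may apply the Hajlasz inequality under the integral sign to obtain
\[
|b\ast(\mu-\nu)(x)|\le c_d\int_{\RR^{2d}}|y_1-y_2|\big(MDb(x-y_1)+MDb(x-y_2)\big)\,m(\dd y_1,\dd y_2).
\]

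Next I would decouple the size of the displacement from the maximal function by H\"older's inequality with the conjugate pair $(r',r)$. Treating the two summands separately and using that $MDb(x-y_1)$ depends only on the first coordinate (whose marginal is $\mu$), the first term is bounded by
\[
\Big(\int|y_1-y_2|^{r'}\,m\Big)^{1/r'}\Big(\int (MDb(x-y_1))^{r}\,\mu(\dd y_1)\Big)^{1/r}=d_{r'}(\mu,\nu)\Big(\int (MDb(x-y))^{r}\,\mu(\dd y)\Big)^{1/r},
\]
and symmetrically the second term produces the same factor with $\nu$ in place of $\mu$. It then remains to control $\int (MDb(x-y))^{r}\,\mu(\dd y)$. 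Here I would apply H\"older once more, now with exponents $(p/r,(p/r)')$, which is legitimate because the hypothesis forces $r<p$; this gives
\[
\int (MDb(x-y))^{r}\,\mu(\dd y)\le \|MDb\|_{L^p_x}^{r}\,\|\mu\|_{L^{(p/r)'}_x},
\]
and the $L^p$-boundedness of the maximal operator (valid since $p>1$) yields $\|MDb\|_{L^p_x}\le c_{d,p}\|Db\|_{L^p_x}\le c_{d,p}\|b\|_{W^{1,p}_x}$.

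The only bookkeeping left is to match the exponent on $\mu$: the constraint $\tfrac rp+\tfrac1q\le1$ is precisely equivalent to $(p/r)'\le q$, so that the probability density $\mu\in L^1_x\cap L^q_x$ indeed lies in $L^{(p/r)'}_x$ and its norm there is controlled by $\|\mu\|_{L^q_x}$ (with equality of exponents exactly when $\tfrac rp+\tfrac1q=1$, and otherwise by interpolating against the total mass $\|\mu\|_{L^1_x}=1$). Collecting the two symmetric terms, taking the supremum over $x\in\RR^d$, and absorbing all dimensional and exponent-dependent constants into a single $C=C(d,p,q,r)$ gives the claimed bound. The genuinely substantive input is the Hajlasz inequality, which converts the pointwise increment of the Sobolev kernel into a product of a displacement and maximal functions; the main point requiring care is the exponent arithmetic in the two H\"older applications, ensuring simultaneously that the maximal operator acts on an $L^{>1}$ space and that the density's integrability lands inside $L^q_x$.
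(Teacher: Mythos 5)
Your proof is correct and follows essentially the same route as the paper: the Hajlasz maximal inequality \eqref{eq: maximal function1} applied along a coupling (valid $m$-a.e.\ thanks to absolute continuity of $\mu,\nu$), H\"older with exponents $(r',r)$ to extract $d_{r'}(\mu,\nu)$, then H\"older with $(p/r,(p/r)')$ together with the $L^p$-boundedness of the maximal operator; your exponent bookkeeping $(p/r)'\le q \iff \tfrac rp+\tfrac1q\le 1$ matches the paper's equivalent reduction $rq'=p$. The only cosmetic difference is that you fix an optimal coupling at the outset while the paper bounds over an arbitrary coupling and minimizes at the end, which is immaterial.
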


\begin{proof}
If $d_{r'}(\mu,\nu)=+\infty$ the inequality is trivially true (recall our definition of $d_{r'}(\mu,\nu)$, which is valid for any $\mu,\nu \in \cP(\RR^d)$), so we can assume $d_{r'}(\mu,\nu)<\infty$. Moreover by hypothesis $\mu,\nu\in L^{\tilde{q}}_x$ for any $\tilde{q}\in [1,q]$, therefore w.l.o.g. we can assume $r/p+1/q=1$.

Let $m\in \Pi(\mu,\nu)$ be any coupling of $(\mu,\nu)$ and let $N\subset\RR^d$ be as in \eqref{eq: maximal function1}; since $\mu,\nu$ are absolutely continuous w.r.t. Lebesgue, it holds that $m((N\times\RR^d) \cup (\RR^d\times N))=0$. 
Therefore applying \eqref{eq: maximal function1}, for any fixed $x\in \RR^d$, gives that
\begin{align*}
|b\ast(\mu-\nu)(x)|
& = \bigg| \int_{\RR^{2d}} [b(x-y)-b(x-z)]\, m(\dd y,\dd z)]\bigg|\\
& \lesssim \int_{((\RR^d\times N) \cup (N\times \RR^d))^c} |y-z|\, (M Db(x-y)+MDb(x-z))\, m(\dd y,\dd z) \\
& \leq \bigg(\int_{\RR^{2d}} | y-z|^{r'} \,m(\dd y,\dd z) \bigg)^{1/r'}\, \bigg(\int_{\RR^{2d}} | M Db(x-y) + M Db(x-z)|^r \,m(\dd y,\dd z)\bigg)^{1/r}.
\end{align*}
Moreover we have
\begin{align*}
\int_{\RR^d} |M Db(x-y)|^r \,m(\dd y,\dd z)
= \int_{\RR^d} |M Db(x-y)|^r \mu(\mathd y)
\leq \| M Db\|_{L^{rq'}}^r \|\mu \|_{L^q}
\lesssim \| Db\|_{L^{rq'}}^r \|\mu \|_{L^q};
\end{align*}
since $rq'=p$, applying a similar estimate to $M Db(x-z)$ we obtain that
\begin{equation*}
\sup_{x\in \RR^d} |b\ast(\mu-\nu)(x)| \lesssim \|b\|_{W^{1,p}}\, \big(\|\mu\|_{L^q}^{1/r}+\|\nu\|_{L^q}^{1/r}\big)\, \bigg(\int_{\RR^{2d}} | y-z|^{r'} \,m(\dd y,\dd z) \bigg)^{1/r'}.
\end{equation*}
Minimizing over all possible $m\in \Pi(\mu,\nu)$ we obtain the conclusion.
\end{proof}

Inequality \eqref{eq: maximal function1} is an example of a family of possibly asymmetric inequalities involving $M b$, see \cite{brue2021positive} for a general discussion.
It was shown in \cite[Lemma 5.1]{caravenna2021directional} that, under the stricter requirement that $b\in W^{1,p}(\RR^d;\RR^d)$ for $p>d$, there exists a function $g\in L^p(\RR^d)$ (which can be taken as $g=c_{d,p}\, (M |Db|^{\tilde{p}})^{1/\tilde{p}}$ for any $\tilde{p}\in (d,p)$) and a Lebesgue-negligible set $N\subset\RR^d$ such that $\| g\|_{L^p} \lesssim_{d,p} \| f\|_{W^{1,p}}$ and 
\begin{equation}\label{eq:maximal-function-onesided}
  |f(x)-f(y)|\leq g(x)\,|x-y| \quad \forall\, x,y\in \RR^d\setminus N.
\end{equation}
%

Contrary to Lemma \ref{lem: maximal function1}, which requires both $\mu,\nu\in L^q_x$, the one-sided inequality \eqref{eq:maximal-function-onesided} provides the following estimate.

\begin{lem}\label{lem:maximal-function2}
Let $(p,q,r)\in (1,\infty)^3$ be parameters satisfying
\begin{equation}\label{eq:maximal-function2-coeff}
  p>d, \quad \frac{r}{p}+\frac{1}{q}\leq 1.
\end{equation}
Then there exists a constant $C=C(d,p,q,r)$ such that, for any $b\in W^{1,p}_x$ and $\mu,\nu \in \cP$ with $\mu\in L^q_x$, it holds that
\begin{equation*}
| (b\ast\mu)(x)-(b\ast\nu)(y)| \leq C \| b\|_{W^{1,p}} (1+\| \mu\|_{L^q})\,\big(|x-y|+ d_{r'}(\mu,\nu)\big).
\end{equation*}
\end{lem}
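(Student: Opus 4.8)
The plan is to combine the one-sided maximal inequality \eqref{eq:maximal-function-onesided} with a coupling argument, in the same spirit as the proof of Lemma \ref{lem: maximal function1}, but exploiting the \emph{asymmetry} of \eqref{eq:maximal-function-onesided} so that only $\mu$ (and not $\nu$) need be absolutely continuous. If $d_{r'}(\mu,\nu)=+\infty$ the claim is trivial, so I assume it finite. Since $p>d$, Morrey's embedding gives that $b$ admits a continuous representative; let $g\in L^p_x$ and $N\subset\RR^d$ Lebesgue-negligible be as in \eqref{eq:maximal-function-onesided}, so that $\|g\|_{L^p}\lesssim_{d,p}\|b\|_{W^{1,p}}$.

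First I would fix an arbitrary coupling $m\in\Pi(\mu,\nu)$ and write
\[
(b\ast\mu)(x)-(b\ast\nu)(y)=\int_{\RR^{2d}}\big[b(x-w)-b(y-w')\big]\,m(\dd w,\dd w').
\]
Applying \eqref{eq:maximal-function-onesided} with first argument $x-w$ and second argument $y-w'$, together with $|x-w-(y-w')|\leq|x-y|+|w-w'|$, I obtain
\[
|(b\ast\mu)(x)-(b\ast\nu)(y)|\leq |x-y|\int_{\RR^d}g(x-w)\,\mu(\dd w)+\int_{\RR^{2d}}g(x-w)\,|w-w'|\,m(\dd w,\dd w').
\]
The first term is handled by Hölder: $\int_{\RR^d}g(x-w)\,\mu(\dd w)\leq\|g\|_{L^p}\,\|\mu\|_{L^{p'}}$, and since $\tfrac1p+\tfrac1q\leq1$ we have $p'\leq q$, so interpolation between $L^1$ (mass one) and $L^q$ bounds $\|\mu\|_{L^{p'}}$ by $1+\|\mu\|_{L^q}$.

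For the second term I would apply Hölder with exponents $(r,r')$:
\[
\int_{\RR^{2d}}g(x-w)\,|w-w'|\,m(\dd w,\dd w') \leq \Big(\int_{\RR^d}g(x-w)^r\,\mu(\dd w)\Big)^{1/r}\Big(\int_{\RR^{2d}}|w-w'|^{r'}\,m(\dd w,\dd w')\Big)^{1/r'}.
\]
Here the full hypothesis $\tfrac rp+\tfrac1q\leq1$ enters: writing $G=g^r\in L^{p/r}_x$ with $\|G\|_{L^{p/r}}=\|g\|_{L^p}^r$, Hölder gives $\int_{\RR^d} G(x-w)\,\mu(\dd w)\leq\|G\|_{L^{p/r}}\|\mu\|_{L^{(p/r)'}}$, and the condition is exactly what guarantees $(p/r)'\leq q$ (note $r<p$), whence $\|\mu\|_{L^{(p/r)'}}^{1/r}\lesssim 1+\|\mu\|_{L^q}$ again by interpolation. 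Collecting the two terms and taking the infimum over $m\in\Pi(\mu,\nu)$ — observing that the first term is independent of $m$, while the second produces $d_{r'}(\mu,\nu)$ — yields the stated estimate with $C=C(d,p,q,r)$.

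The main obstacle I anticipate is justifying the pointwise application of \eqref{eq:maximal-function-onesided} inside the integral, since $\nu$ need not be absolutely continuous and so $y-w'$ may land in the exceptional set $N$. I would resolve this precisely through the asymmetry of the inequality: absolute continuity of $\mu$ ensures $x-w\notin N$ for $\mu$-a.e. $w$, hence for $m$-a.e. $(w,w')$, and for each such fixed first argument the bound $|b(x-w)-b(v)|\leq g(x-w)\,|x-w-v|$ extends from $v\in\RR^d\setminus N$ to all $v\in\RR^d$ by continuity of $b$ and density of $\RR^d\setminus N$. This is exactly the feature distinguishing this lemma from Lemma \ref{lem: maximal function1}, which instead required both $\mu,\nu\in L^q_x$.
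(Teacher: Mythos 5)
Your proof is correct, and it rests on the same two pillars as the paper's own argument --- the one-sided inequality \eqref{eq:maximal-function-onesided} with $\|g\|_{L^p}\lesssim\|b\|_{W^{1,p}}$, and H\"older with exponents $(r,r')$ against a coupling, followed by interpolation of $\mu$ between $L^1$ and $L^q$ --- but the decomposition is genuinely different. The paper splits the difference as $[(b\ast\mu)(x)-(b\ast\mu)(y)] + [(b\ast\mu)(y)-(b\ast\nu)(y)]$, handling the first piece by Young's inequality, $\|D(b\ast\mu)\|_{L^\infty}\leq\|Db\|_{L^p}\|\mu\|_{L^{p'}}$, and only the second piece by the coupling argument; you instead write the whole quantity as a single integral $\int[b(x-w)-b(y-w')]\,m(\dd w,\dd w')$ and let the triangle inequality $|x-w-(y-w')|\leq|x-y|+|w-w'|$ generate both terms, so that one application of the pointwise bound with weight $g(x-w)$ covers everything. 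The two routes yield the same constant structure; yours is slightly more economical (no Young's inequality, a single use of \eqref{eq:maximal-function-onesided}), while the paper's split is more modular. A genuine merit of your write-up is the final paragraph: since $\nu$ need not be absolutely continuous, the second argument $y-w'$ may land in the exceptional set $N$, and your extension of the inequality to arbitrary second arguments --- fixing the first argument outside $N$, then using the continuous (Morrey) representative of $b$ and density of $\RR^d\setminus N$ --- is exactly what is needed to make this rigorous. The paper is terser on this point (it asserts that ``as before'' $N$ plays no role, although the earlier justification in Lemma \ref{lem: maximal function1} used that \emph{both} measures had densities), so your argument spells out a step the paper leaves implicit.
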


\begin{proof}
As in the proof of Lemma \ref{lem: maximal function1}, we only need to consider the case $d_{r'}(\mu,\nu)<\infty$ and may take $b$ to be smooth and $r/p+1/q=1$ if needed.

Since $p>d$, by the Sobolev embedding $W^{1,p}\hookrightarrow C^0_b$, so $b\ast \mu$ and $b\ast\nu$ are well defined for any $\mu,\nu\in\cP(\RR^d)$; by Young's inequality
\[
|(b\ast \mu)(x)-(b\ast \mu)(y)|
\leq \| b\ast \mu\|_{W^{1,\infty}} |x-y|
\lesssim \| b\|_{W^{1,p}} (1+\| \mu\|_{L^q})|x-y|.
\]

Let $m\in \Pi(\mu,\nu)$, then by virtue of \eqref{eq:maximal-function-onesided} (as before the negligible set $N$ doesn't play any role), going through similar calculations to Lemma \ref{lem: maximal function1} we have
\begin{align*}
  |(b\ast\mu)(y)-(b\ast\nu)(y)|
  & \leq \bigg(\int_{\RR^d} |g|^{r} (y-z_1)\, \dd \mu(z_1)\bigg)^{1/r}\, \bigg( \int_{\RR^{2d}} |z_1-z_2|^{r'}\,\dd m(z_1,z_2) \bigg)^{1/r'}\\
  & \leq \| g\|_{L^{rq'}} \| \mu \|_{L^q}^{1/r} \, \bigg( \int_{\RR^{2d}} |z_1-z_2|^{r'}\,\dd m(z_1,z_2) \bigg)^{1/r'}\\
  & \lesssim \| b\|_{W^{1,p}} (1+\| \mu\|_{L^q})\,\bigg( \int_{\RR^{2d}} |z_1-z_2|^{r'}\,\dd m(z_1,z_2) \bigg)^{1/r'}
\end{align*}
where in the last passage we used the fact that $rq'=p$ and $\| g\|_{L^p} \lesssim \| b\|_{W^{1,p}}$. Combining the two estimates above and taking the infimum over all possible couplings $m\in\Pi(\mu,\nu)$ yields the conclusion.
\end{proof}

\bibliography{all}{}
\bibliographystyle{plain}

\end{document}